\documentclass{amsart}
\usepackage[latin9]{inputenc}
\usepackage{color}
\usepackage{verbatim}
\usepackage{textcomp}
\usepackage{mathtools}
\usepackage{amstext}
\usepackage{amsthm}
\usepackage{amssymb}
\usepackage[unicode=true,
 bookmarks=false,
 breaklinks=false,pdfborder={0 0 1},backref=false,colorlinks=false]
 {hyperref}
\hypersetup{
 bookmarksnumbered,plainpages}
\usepackage{breakurl}

\makeatletter
\numberwithin{equation}{section}
\numberwithin{figure}{section}
\theoremstyle{plain}
\newtheorem{thm}{\protect\theoremname}
\theoremstyle{plain}
\newtheorem{lem}[thm]{\protect\lemmaname}
\theoremstyle{definition}
\newtheorem{defn}[thm]{\protect\definitionname}
\theoremstyle{remark}
\newtheorem{rem}[thm]{\protect\remarkname}
\theoremstyle{plain}
\newtheorem{prop}[thm]{\protect\propositionname}
\theoremstyle{plain}
\newtheorem{cor}[thm]{\protect\corollaryname}

\usepackage{amsfonts}
\usepackage{float}
\usepackage{version}
\allowdisplaybreaks
\overfullrule=5pt

\makeatother

\providecommand{\corollaryname}{Corollary}
\providecommand{\definitionname}{Definition}
\providecommand{\lemmaname}{Lemma}
\providecommand{\propositionname}{Proposition}
\providecommand{\remarkname}{Remark}
\providecommand{\theoremname}{Theorem}

\begin{document}
\global\long\def\perpl{\perp_{L}}
 \global\long\def\perpr{\perp_{R}}

\thanks{This note was written while the first author was member of the \textquotedblleft National
Group for Algebraic and Geometric Structures, and their Applications\textquotedblright{}
(GNSAGA-INdAM). We would like to thank Michela Ceria for meaningful
discussions on the topics treated in the present paper. We are also
in debt with Lea Terracini for her contribution in the computations
of the indecomposable ideals of the Taft algebra and their orthogonals.}

\keywords{Cyclic codes, monomial bilinear forms, indecomposable ideals, orthogonals,
Hopf algebras}

\title{On Indecomposable Ideals Over Some Algebras}

\author{Alessandro Ardizzoni }

\address{University of Turin, Department of Mathematics \textquotedblleft G.
Peano\textquotedblright , via Carlo Alberto 10, I-10123 Torino, Italy.}

\email{alessandro.ardizzoni@unito.it }

\urladdr{sites.google.com/site/aleardizzonihome}

\author{Fabio Stumbo}

\address{University of Ferrara, Department of Mathematics and Computer Science,
Via Ma\-chiavelli 30, Ferrara, I-44121, Italy.}

\email{f.stumbo@unife.it}
\begin{abstract}
In this paper we investigate a family of algebras endowed with a suitable
non-degenerate bilinear form that can be used to define two different
notions of dual for a given right ideal. We apply our results to the
classification of the right ideals and their duals in the cyclic group
algebra, in the Taft algebra and in another example of Hopf algebra
arising as bosonization.
\end{abstract}

\maketitle
\subjclass{[}2010{]}{Primary 16T05; Secondary 15A63, 94B15, 94B05}

\tableofcontents{}

\section*{Introduction}

Fix a base field $\Bbbk$. Recall that a linear code of length $n\geq2$
is a vector subspace of $\Bbbk^{n}$. A linear code $C$ is called
cyclic whenever $\left(c_{0},c_{1},\cdots,c_{n-1}\right)\in C$ implies
$\left(c_{n-1},c_{0},c_{1},\cdots,c_{n-2}\right)\in C.$ By considering
the cyclic group algebra $\Bbbk\left\langle x\right\rangle :=\Bbbk\left[X\right]/\left(X^{n}-1\right)$,
where we set $x:=X+\left(X^{n}-1\right)$, then the assignments
\begin{equation}
\Bbbk\left\langle x\right\rangle \ni c_{0}+c_{1}x+\cdots+c_{n-1}x^{n-1}\longleftrightarrow\left(c_{0},c_{1},\cdots,c_{n-1}\right)\in\Bbbk^{n}\label{eq: KCn}
\end{equation}
yield a bijective correspondence between ideals in $\Bbbk\left\langle x\right\rangle $
and cyclic codes of length $n$. Moreover, if $g$ is the generator
polynomial for a cyclic code, then the dual code corresponds, via
the map \eqref{eq: KCn}, to the orthogonal of the ideal $\left(g\right)$
with respect to the usual scalar product defined by $\left\langle x^{i},x^{j}\right\rangle =\delta_{i,j}$.

Since $\Bbbk\left\langle x\right\rangle $ is the group algebra over
the cyclic group of order $n$, i.e. $\left\langle x\right\rangle :=\left\{ 1,x,\ldots,x^{n-1}\right\} $,
it is in particular a Hopf algebra. This constitutes a link between
the study of the ideals in Hopf algebras and Coding Theory. It is
then natural to look at a right ideal in a Hopf algebra as a sort
of \textquotedblleft Hopf code\textquotedblright . Following this
idea, a characterization of all projective indecomposable ideals in
the Taft algebra was given in \cite{CGL}.

The particular description of dual codes in the Hopf algebra $\Bbbk\left\langle x\right\rangle $
places the notion of orthogonal with respect to a suitable non-degenerate
bilinear form as the proper counterpart of the concept of dual code.

We will show that both the cyclic group algebra and the Taft algebra
come out to be endowed with a non-degenerate bilinear form that we
will call monomial and that can be used to compute explicitly the
indecomposable right ideals and their orthogonals.

More generally in this paper we investigate a family of algebras endowed
with a monomial non-degenerate bilinear form: this form can be used
to define two different notions of dual for a given right ideal. As
an application, we recover the classification of the right ideals
and their duals in the cyclic group and Taft algebras and provide
the same classification for another example of Hopf algebra arising
as a bosonization in \cite{CDMM}.

\smallskip{}

The paper is organized as follows.

In Section \ref{sec:Bilinear-forms} we introduce and characterize
the concept of monomial bilinear form on a finite-dimensional vector
space $V$. This type of form is the main tool in our paper. In Lemma
\ref{lem:orthog}, we study the behaviour of orthogonals of subspaces
of $V$ spanned by a subset of the basis, with respect to a monomial
bilinear form.

In Section \ref{sec:Indec-ideals} we introduce the $\Bbbk$-algebra
$\Bbbk\left(\omega,N\right)$ presented by generators and relations.
It can also be introduced as a quotient of an Ore extension with zero
derivation. In Theorem \ref{thm:indecomposableGold} we provide an
irredundant set of representatives of indecomposable right ideals
in $\Bbbk\left(\omega,N\right)$. We also give an explicit description
of all indecomposable right ideals therein. This is of interest from
the Coding Theory point of view because isomorphic ideals needs not
to give rise to equivalent codes. Then we attach to $\Bbbk\left(\omega,N\right)$
a particular bilinear form which we prove to be monomial in Lemma
\ref{lem:scalarcan}. In Theorem \ref{thm:orthog-indec}, we describe
explicitly the orthogonals, with respect to this form, of the representatives
of indecomposable right ideals mentioned above.

In Section \ref{sec:Hopf-algebra-codes} we turn our attention to
Hopf algebras. A peculiar result in Hopf theory is the Structure Theorem
for Hopf Modules. Given a finite-dimensional Hopf algebra $H$, the
structure theorem yields a right $H$-linear isomorphism $\phi:H\to H^{*}$.
Surprisingly, $\phi$ comes out to generalize the map \eqref{eq: KCn}
giving the correspondence mentioned above for cyclic codes; thus $\phi$
looks like a natural tool to generalize cyclic codes to Hopf algebras.
In general $\phi$ yields a bijective correspondence between right
ideals in $H$ and right $H$-submodules of $H^{*}$ (which play the
role of generalized cyclic codes). We attach to $\phi$ a specific
non-degenerate bilinear form and use it to define two different notions
of dual for a given right ideal: both comes out to be right ideals
as well. We investigate this form in the case when $H$ or its antipode
$S$ satisfy some properties, such as $H$ being cosemisimple in Lemma
\ref{lem:Involutory}.

In Section \ref{sec:Examples-and-applications} we collect and investigate
the main examples and applications of our results.

First we recover the cyclic group algebra as a trivial case of $\Bbbk\left(\omega,N\right)$
taking $\omega=\mathrm{Id}$ and $N=1$ in Subsection \ref{subsec:Cyclic-group-algebra}.

Then, in Subsection \ref{subsec:Taft-algebra}, we apply our machinery
to the Taft algebra. This is achieved regarding the Taft algebra as
an algebra of the form $\Bbbk\left(\omega,N\right)$ in Lemma \ref{lem:formuTaft}
and showing in Lemma \ref{lem:scalar-1} that the non-degenerate bilinear
form attached to $\phi$ is one of the canonical monomial forms we
investigated on $\Bbbk\left(\omega,N\right)$. As a consequence we
recover all indecomposable right ideals for the Taft algebra in Theorem
\ref{thm:indecomposable} and the corresponding orthogonals in Theorem
\ref{thm:orthogonal}.

Finally in Subsection \ref{subsec:Another-example}, we consider a
$24$-dimensional Hopf algebra arising as a bosonization described
in \cite{CDMM}. Following the lines of the previous examples, we
show in Lemma \ref{lem:formCDMM} that also this algebra is of the
form $\Bbbk\left(\omega,N\right)$ and in Lemma \ref{lem:scalar-2}
that the non-degenerate bilinear form attached to $\phi$ is one of
the canonical monomial forms we investigated on $\Bbbk\left(\omega,N\right)$.
Also in this case we obtain a complete description of indecomposable
right ideals in Theorem \ref{thm:indecomposableCDMM} and their orthogonals
in \ref{thm:orthogonal-CDMM}. This classification is new at the best
of our knowledge.

\section{Bilinear forms\label{sec:Bilinear-forms}}

In this section we deal with general results concerning vector spaces.

Let $V$ be a finite-dimensional vector space and let $\left\langle -,-\right\rangle :V\times V\to\Bbbk$
be a bilinear form on $V$. The form is non-degenerate if and only
if the linear map $\phi:V\to V^{*}$, defined by setting $\phi\left(v\right)=\left\langle v,-\right\rangle $
for every $v\in V$, is an isomorphism.

If $\left\langle -,-\right\rangle $ is non-degenerate there is a
unique linear map $\gamma:V\to V$ such that
\begin{equation}
\left\langle x,y\right\rangle =\left\langle y,\gamma\left(x\right)\right\rangle ,\text{ for all }x,y\in V,\label{eq:gamma}
\end{equation}
The map $\gamma$ is necessarily injective whence invertible as $V$
is finite-dimensional. We call it the \textit{Nakayama isomorphism}
(if $V$ is an algebra and the form is associative, i.e. $\left\langle xy,z\right\rangle =\left\langle x,yz\right\rangle $
for all $x,y,z\in V$, then $\gamma$ becomes an algebra automorphism
known as the Nakayama automorphism).

Given a linear subspace $W$ of $V$, we define the left and right
orthogonals of $W$ as the vector subspaces
\begin{align*}
W^{\perp_{L}} & :=\left\{ x\in V\mid\left\langle x,y\right\rangle =0,\forall y\in W\right\} ,\\
W^{\perp_{R}} & :=\left\{ x\in V\mid\left\langle y,x\right\rangle =0,\forall y\in W\right\} .
\end{align*}
We will simply write $W^{\perp}$when $W^{\perp_{L}}=W^{\perp_{R}}$.
In particular this happens if, for every $x,y\in V$, one has $\left\langle x,y\right\rangle =0\Leftrightarrow\left\langle y,x\right\rangle =0$.
\begin{lem}
\cite[Section 6.1]{Jacobson}\label{lem:SwA.1} Assume $\left\langle -,-\right\rangle $
is non-degenarate and let $U$ and $W$ be subspaces of $V$. Then
\end{lem}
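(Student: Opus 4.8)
The plan is to transport everything to the dual space via the isomorphism $\phi$ and then invoke the classical facts about annihilators. For a subspace $W\subseteq V$, write $W^{\circ}:=\left\{ f\in V^{*}\mid f\left(W\right)=0\right\} $ for its annihilator. The first step is the identification $\phi\bigl(W^{\perpl}\bigr)=W^{\circ}$: indeed $x\in W^{\perpl}$ means $\phi\left(x\right)\left(y\right)=\left\langle x,y\right\rangle =0$ for every $y\in W$, i.e. $\phi\left(x\right)\in W^{\circ}$, and since $\phi$ is an isomorphism (this is where non-degeneracy enters) it restricts to a linear isomorphism $W^{\perpl}\xrightarrow{\ \sim\ }W^{\circ}$. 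From the standard annihilator identities $\dim W^{\circ}=\dim V-\dim W$, $\left(U+W\right)^{\circ}=U^{\circ}\cap W^{\circ}$ and $\left(U\cap W\right)^{\circ}=U^{\circ}+W^{\circ}$ one then reads off the corresponding statements for $\perpl$ (the inclusion $\supseteq$ in the identity for $U\cap W$ is immediate, and equality follows by comparing dimensions via the dimension formula together with the Grassmann relation).

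The second step deals with the double orthogonal. The inclusion $W\subseteq\bigl(W^{\perpl}\bigr)^{\perpr}$ is straightforward: if $w\in W$ and $x\in W^{\perpl}$ then $\left\langle x,w\right\rangle =0$, so $w$ annihilates $W^{\perpl}$ on the right. Applying the dimension formula twice gives $\dim\bigl(W^{\perpl}\bigr)^{\perpr}=\dim V-\dim W^{\perpl}=\dim W$, forcing equality $\bigl(W^{\perpl}\bigr)^{\perpr}=W$; symmetrically $\bigl(W^{\perpr}\bigr)^{\perpl}=W$.

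The third step handles the right orthogonals uniformly by reducing them to the left ones through the Nakayama isomorphism $\gamma$. Using \eqref{eq:gamma} one checks that $z\in W^{\perpr}$ iff $\left\langle w,z\right\rangle =0$ for all $w\in W$ iff $\left\langle z,\gamma\left(w\right)\right\rangle =0$ for all $w\in W$ iff $z\in\gamma\left(W\right)^{\perpl}$; hence $W^{\perpr}=\gamma\left(W\right)^{\perpl}$. Since $\gamma$ is an isomorphism, $\dim\gamma\left(W\right)=\dim W$, $\gamma\left(U+W\right)=\gamma\left(U\right)+\gamma\left(W\right)$ and $\gamma\left(U\cap W\right)=\gamma\left(U\right)\cap\gamma\left(W\right)$, so every $\perpr$-statement is obtained by applying the already established $\perpl$-statement to the images under $\gamma$.

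The only real bookkeeping difficulty is keeping the left/right distinction straight; the translation $W^{\perpr}=\gamma\left(W\right)^{\perpl}$ is what makes this painless, after which the heart of the argument is simply the dimension count $\dim W^{\perpl}=\dim V-\dim W$ coming from non-degeneracy, applied once for the sum/intersection identities and twice for the double-orthogonal identity. The rest is routine linear algebra and could alternatively be cited directly from \cite[Section 6.1]{Jacobson}.
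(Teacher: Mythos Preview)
The paper does not actually prove this lemma: it merely records the statement with a citation to \cite[Section~6.1]{Jacobson}. Your argument is correct and supplies the details that the paper omits.

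Two small presentational points. First, in your second step you invoke the dimension formula for $\perpr$ (to compute $\dim\bigl(W^{\perpl}\bigr)^{\perpr}$) before you derive it in the third step via $\gamma$. This is not a logical gap, only an ordering issue: either swap the two steps, or observe directly that non-degeneracy of $\phi$ forces the companion map $\psi\colon V\to V^{*}$, $\psi(v)=\langle -,v\rangle$, to be an isomorphism as well in finite dimension (since $\phi$ surjective implies that $\langle w,v\rangle=0$ for all $w$ forces $v=0$), and then run the same annihilator argument for $\perpr$ through $\psi$. Second, you do not explicitly treat the monotonicity bullet ($U\subseteq W\Rightarrow W^{\perpl}\subseteq U^{\perpl}$ and likewise for $\perpr$), but this is immediate from the definitions and presumably what you had in mind by ``routine linear algebra''.

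Your use of the Nakayama isomorphism $\gamma$ to transport the $\perpr$-statements to already-proved $\perpl$-statements is a clean device and is consistent with how the paper itself relates the two orthogonals later on (Lemma~\ref{lem:gamDuals}).
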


\begin{itemize}
\item if $U\subseteq W$ then $W^{\perp_{L}}\subseteq U^{\perp_{L}}$ and
$W^{\perp_{R}}\subseteq U^{\perp_{R}}$;
\item $\left(U+W\right)^{\perp_{L}}=U^{\perp_{L}}\cap W^{\perp_{L}}$ and
$\left(U+W\right)^{\perp_{R}}=U^{\perp_{R}}\cap W^{\perp_{R}}$;
\item $\left(U\cap W\right)^{\perp_{L}}=U^{\perp_{L}}+W^{\perp_{L}}$ and
$\left(U\cap W\right)^{\perp_{R}}=U^{\perp_{R}}+W^{\perp_{R}}$;
\item $\mathrm{dim}_{\Bbbk}W^{\perp_{L}}=\mathrm{dim}_{\Bbbk}V-\mathrm{dim}_{\Bbbk}W=\mathrm{dim}_{\Bbbk}W^{\perp_{R}}$;
\item $W^{\perp_{R}\perp_{L}}=W=W^{\perp_{L}\perp_{R}}$.
\end{itemize}
\begin{lem}
\label{lem:gamDuals}Let $\gamma:V\to V$ be a $\Bbbk$-linear isomorphism
such that \eqref{eq:gamma} holds true. Then $W^{\perp_{R}}=\gamma\left(W^{\perp_{L}}\right)$
for every subspace $W$ of $V$.
\end{lem}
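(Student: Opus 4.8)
The plan is to verify the claimed equality $W^{\perp_{R}} = \gamma(W^{\perp_{L}})$ by double inclusion, working directly from the defining relation \eqref{eq:gamma}. The basic observation is that, for $x \in V$, membership of $x$ in $W^{\perp_{R}}$ says $\left\langle y, x\right\rangle = 0$ for all $y \in W$, while membership of $\gamma^{-1}(x)$ in $W^{\perp_{L}}$ says $\left\langle \gamma^{-1}(x), y\right\rangle = 0$ for all $y \in W$. These two conditions should be seen to coincide once we apply \eqref{eq:gamma} with a suitable choice of arguments.

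Concretely, I would first take $x \in \gamma(W^{\perp_{L}})$, write $x = \gamma(z)$ with $z \in W^{\perp_{L}}$, and for arbitrary $y \in W$ compute $\left\langle y, x\right\rangle = \left\langle y, \gamma(z)\right\rangle$. By \eqref{eq:gamma} (read with the roles $x \mapsto z$, $y \mapsto y$, so that $\left\langle z, y\right\rangle = \left\langle y, \gamma(z)\right\rangle$), this equals $\left\langle z, y\right\rangle$, which vanishes because $z \in W^{\perp_{L}}$ and $y \in W$. Hence $x \in W^{\perp_{R}}$, giving $\gamma(W^{\perp_{L}}) \subseteq W^{\perp_{R}}$. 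For the reverse inclusion, I would use that $\gamma$ is an isomorphism: any $x \in W^{\perp_{R}}$ can be written $x = \gamma(z)$ for a unique $z \in V$, and running the same computation backwards shows $\left\langle z, y\right\rangle = \left\langle y, \gamma(z)\right\rangle = \left\langle y, x\right\rangle = 0$ for all $y \in W$, so $z \in W^{\perp_{L}}$ and $x \in \gamma(W^{\perp_{L}})$. This yields $W^{\perp_{R}} \subseteq \gamma(W^{\perp_{L}})$ and hence equality.

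Alternatively — and perhaps more cleanly — one can avoid inclusions altogether by noting that both $W^{\perp_{R}}$ and $\gamma(W^{\perp_{L}})$ are subspaces of the same dimension: by Lemma \ref{lem:SwA.1} we have $\dim_{\Bbbk} W^{\perp_{R}} = \dim_{\Bbbk} V - \dim_{\Bbbk} W = \dim_{\Bbbk} W^{\perp_{L}} = \dim_{\Bbbk} \gamma(W^{\perp_{L}})$, the last equality because $\gamma$ is an isomorphism. So it suffices to prove one inclusion, say $\gamma(W^{\perp_{L}}) \subseteq W^{\perp_{R}}$, which is exactly the forward computation above. There is essentially no obstacle here: the only point requiring a modicum of care is applying \eqref{eq:gamma} with the arguments in the correct order, since the relation $\left\langle x, y\right\rangle = \left\langle y, \gamma(x)\right\rangle$ is not symmetric in $x$ and $y$. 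I expect the whole argument to be three or four lines.
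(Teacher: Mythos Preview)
Your proposal is correct and follows essentially the same approach as the paper: use \eqref{eq:gamma} to see that $\gamma(x)\in W^{\perp_R}$ if and only if $x\in W^{\perp_L}$, then conclude by invertibility of $\gamma$. The paper packages this as a single biconditional rather than two separate inclusions (and does not need the dimension count), but the underlying argument is identical.
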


\begin{proof}
Let $x\in V$. Then $\gamma\left(x\right)\in W^{\perp_{R}}$ if and
only if $\left\langle y,\gamma\left(x\right)\right\rangle =0$, for
every $y\in W$. By \eqref{eq:gamma} this is equivalent to $\left\langle x,y\right\rangle =0$,
for every $y\in W$ i.e. $x\in W^{\perp_{L}}$. Thus $\gamma\left(x\right)\in W^{\perp_{R}}$
if and only if $x\in W^{\perp_{L}}$. Since $\gamma$ is invertible
we conclude.
\end{proof}
As usual let us denote by $\mathcal{S}_{n}$ the group of permutations
of the set $\left\{ 1,\dots,n\right\} $ for every $n\geq1$.
\begin{defn}
Given a $\Bbbk$-vector space $V$ with basis $\mathcal{B}=\left\{ v_{1},\dots,v_{n}\right\} $,
a \textit{right monomial transformation of $V$ with respect to $\mathcal{B}$
}(see e.g. \cite[page 12]{Wood}) is a $\Bbbk$-linear map $T:=T\left(\sigma,k_{i}\right):V\to V$
such that $T\left(v_{i}\right)=k_{i}v_{\sigma\left(i\right)}$ where
$\sigma\in\mathcal{S}_{n}$ and $k_{i}\in\Bbbk\setminus\left\{ 0\right\} $
for every $i\in\left\{ 1,\dots,n\right\} $.
\end{defn}

\begin{lem}
\label{lem:monomial}The following are equivalent for a basis $\mathcal{B}=\left\{ v_{1},\dots,v_{n}\right\} $
of $V$.
\begin{enumerate}
\item There is $\sigma\in\mathcal{S}_{n}$ and non-zero elements $d_{i}\in\Bbbk$
such that, for every $i,j\in\left\{ 1,\dots,n\right\} $, one has
$\left\langle v_{i},v_{j}\right\rangle =d_{i}\delta_{\sigma\left(i\right),j}$.
\item There is $\sigma\in\mathcal{S}_{n}$ and non-zero elements $d_{i}\in\Bbbk$
such that $\phi\left(v_{i}\right)=d_{i}v_{\sigma\left(i\right)}^{*}$,
where $v_{j}^{*}$ is the dual basis element defined by $v_{j}^{*}\left(v_{i}\right)=\delta_{i,j}$.
\item The matrix of $\left\langle -,-\right\rangle $ relative to $\mathcal{B}$
is of form $D\cdot P$ where $D$ is a non-singular diagonal matrix
and $P$ is a permutation matrix.
\item The $\Bbbk$-linear map $T:\Bbbk^{n}\to\Bbbk^{n}:e_{i}\mapsto\sum_{j=1}^{n}\left\langle v_{i},v_{j}\right\rangle e_{j}$
(i.e. the linear map attached to the matrix of $\left\langle -,-\right\rangle $
relative to $\mathcal{B}$) is a right monomial transformation of
$\Bbbk^{n}$ with respect to the canonical basis $\left\{ e_{1},\dots,e_{n}\right\} $.
\end{enumerate}
\end{lem}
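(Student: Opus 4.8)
The plan is to establish the cycle $(1)\Rightarrow(2)\Rightarrow(3)\Rightarrow(4)\Rightarrow(1)$, since each pair of neighbouring conditions is connected by a completely routine unravelling of definitions. First I would prove $(1)\Leftrightarrow(2)$: by definition $\phi(v_i)=\langle v_i,-\rangle$, so $\phi(v_i)$ evaluated on $v_j$ equals $\langle v_i,v_j\rangle$. If $(1)$ holds then $\phi(v_i)(v_j)=d_i\delta_{\sigma(i),j}=d_i v_{\sigma(i)}^*(v_j)$ for all $j$, hence $\phi(v_i)=d_i v_{\sigma(i)}^*$, which is $(2)$; conversely, evaluating the identity in $(2)$ on $v_j$ recovers $\langle v_i,v_j\rangle=d_i\delta_{\sigma(i),j}$.

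Next, $(1)\Leftrightarrow(3)$: the $(i,j)$-entry of the matrix $M$ of $\langle-,-\rangle$ relative to $\mathcal B$ is by definition $M_{ij}=\langle v_i,v_j\rangle$. Condition $(1)$ says $M_{ij}=d_i\delta_{\sigma(i),j}$, i.e.\ $M=DP$ where $D=\operatorname{diag}(d_1,\dots,d_n)$ is non-singular (the $d_i$ being non-zero) and $P$ is the permutation matrix with $P_{ij}=\delta_{\sigma(i),j}$, i.e.\ the matrix of $\sigma$. Conversely, if $M=DP$ with $D$ non-singular diagonal and $P$ a permutation matrix, then $P$ corresponds to a unique $\sigma\in\mathcal S_n$ via $P_{ij}=\delta_{\sigma(i),j}$, the diagonal entries $d_i$ of $D$ are non-zero, and multiplying out gives $M_{ij}=d_i\delta_{\sigma(i),j}$, which is $(1)$. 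One small point to be careful about here is the bookkeeping convention for the permutation matrix (whether $\sigma$ acts on rows or columns and on which side), so I would fix $P_{ij}=\delta_{\sigma(i),j}$ explicitly and check it matches the convention implicit in the definition of a right monomial transformation.

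Finally, $(3)\Leftrightarrow(4)$: by construction the linear map $T$ in $(4)$ sends $e_i$ to $\sum_j \langle v_i,v_j\rangle e_j = \sum_j M_{ij}e_j$, so $T$ is exactly the map whose matrix (with the stated row/column convention) is $M$. Comparing with the definition of a right monomial transformation $T(\sigma,k_i)$, namely $T(e_i)=k_i e_{\sigma(i)}$, we see $T$ is a right monomial transformation with respect to $\{e_1,\dots,e_n\}$ if and only if for each $i$ the $i$-th row of $M$ has a single non-zero entry, located in column $\sigma(i)$, with value $k_i=d_i\neq 0$; and this is precisely the statement that $M=DP$ for $D$ non-singular diagonal and $P$ the permutation matrix of $\sigma$, i.e.\ $(3)$.

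I do not expect a genuine obstacle: every implication is a direct translation between the four equivalent packagings (scalar products of basis vectors, the map $\phi$ on the dual basis, the Gram matrix, and the associated monomial transformation). The only thing that needs care—and the place where a sloppy write-up could go wrong—is keeping the indexing and the permutation-matrix convention consistent across all four formulations, particularly matching the ``right'' in ``right monomial transformation'' to the fact that in $(1)$ the scalar $d_i$ and the permutation $\sigma$ are both indexed by the \emph{first} argument of the form.
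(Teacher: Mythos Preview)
Your proposal is correct and essentially identical to the paper's proof: the paper also proves $(1)\Leftrightarrow(2)$ via $\phi(v_i)(v_j)=\langle v_i,v_j\rangle$, proves $(1)\Leftrightarrow(3)$ by taking $D=\mathrm{diag}(d_1,\dots,d_n)$ and $P$ with $(i,j)$-entry $\delta_{\sigma(i),j}$, and handles $(4)$ directly from the definition $T(e_i)=d_i e_{\sigma(i)}$. The only cosmetic difference is that the paper links $(4)$ back to $(1)$ rather than to $(3)$, and your opening sentence advertises a cycle while what you actually describe (and what suffices) is a set of pairwise equivalences.
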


\begin{proof}
$\left(1\right)\Leftrightarrow\left(2\right).$ This equivalence follows
from the equalities $\phi\left(v_{i}\right)\left(v_{j}\right)=\left\langle v_{i},v_{j}\right\rangle $
and $\left(d_{i}v_{\sigma\left(i\right)}^{*}\right)\left(v_{j}\right)=d_{i}\delta_{\sigma\left(i\right),j}$.

$\left(1\right)\Leftrightarrow\left(3\right).$ Take $D=\mathrm{diag}\left(d_{1},\dots,d_{n}\right)$
and let $P$ be the matrix whose $\left(i,j\right)$-entry is given
by $\delta_{\sigma\left(i\right),j}$. Then the $\left(i,j\right)$-entry
of $D\cdot P$ is exactly $d_{i}\delta_{\sigma\left(i\right),j}$.

$\left(1\right)\Leftrightarrow\left(4\right).$ By definition $T$
is a right monomial transformation if and only if there is a permutation
$\sigma$ of $\left\{ 1,\dots,n\right\} $ and non-zero elements $d_{i}\in\Bbbk$
such that $T\left(e_{i}\right)=d_{i}e_{\sigma\left(i\right)}.$
\end{proof}
\begin{defn}
We say that the bilinear form $\left\langle -,-\right\rangle $ is
\textit{monomial} (with respect to $\mathcal{B}$) if one of the equivalent
conditions of Lemma \ref{lem:monomial} holds. Note that a monomial
bilinear form is necessarily non-degenarate since $\phi\left(v_{i}\right)=d_{i}v_{\sigma\left(i\right)}^{*}$
for every $i\in\left\{ 1,\dots,n\right\} $.
\end{defn}

\begin{rem}
\label{rem:gamma} Let $V$ be a vector space with basis $\mathcal{B}=\left\{ v_{1},\dots,v_{n}\right\} $.
The following are equivalent for $\tau\in\mathcal{S}_{n}$ and non-zero
elements $c_{i}\in\Bbbk$.
\begin{enumerate}
\item For every $i,j\in\left\{ 1,\dots,n\right\} $, one has $\left\langle v_{i},v_{j}\right\rangle =c_{i}\left\langle v_{j},v_{\tau\left(i\right)}\right\rangle $.
\item One has that \eqref{eq:gamma} holds true where $\gamma:=T\left(\tau,c_{i}\right):V\to V$
is the linear map defined by $v_{i}\mapsto c_{i}v_{\tau\left(i\right)}$.
Note that $\gamma$ is necessarily invertible.
\end{enumerate}
\end{rem}

\begin{rem}
Let $\mathcal{B}=\left\{ v_{1},\dots,v_{n}\right\} $ be a basis of
$V$ and let $W$ be a subspace of $V$. Assume there is a right monomial
transformation $\gamma=T\left(\tau,c_{i}\right)$ such that \eqref{eq:gamma}
holds true. By \cite[Proposition 6.1]{Wood}, we have that $\gamma$
is an isometry of $V$ with respect to the basis $\mathcal{B}$. By
Lemma \ref{lem:gamDuals} we also have $W^{\perp_{R}}=\gamma\left(W^{\perp_{L}}\right)$.
Then $W^{\perp_{R}}$ and $W^{\perp_{L}}$ are equivalent codes in
the sense of \cite[page 565]{Wood}.
\end{rem}

In the following result we describe the connection between the left
and right orthogonals in case of vector subspaces spanned by suitable
basis elements.
\begin{lem}
\label{lem:orthog} Let $\mathcal{B}=\left\{ v_{1},\dots,v_{n}\right\} $
be a basis of $V$ and let $W_{B}:=\mathrm{Span}_{\Bbbk}\left\{ v_{i}\mid i\in B\right\} $
for some $B\subseteq\left\{ 1,\dots,n\right\} $.

1) Assume there is a right monomial transformation $\gamma=T\left(\tau,c_{i}\right)$
such that \eqref{eq:gamma} holds true. Then $W_{B}^{\perp_{R}}=W_{\tau\left(B\right)}^{\perp_{L}}$
where $\tau\left(B\right):=\left\{ v_{\tau\left(i\right)}\mid i\in B\right\} $.

2) If $\left\langle -,-\right\rangle $ is monomial with respect to
$\mathcal{B}$, then the hypothesis of 1) holds for $\tau:=\sigma^{2}$.
\end{lem}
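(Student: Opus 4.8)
The plan is to reduce both claims to the defining identity \eqref{eq:gamma} evaluated on the basis $\mathcal{B}$, pinning down the permutation in part 2 by a support comparison of Kronecker deltas. For \emph{part 1}, recall that $\gamma=T\left(\tau,c_{i}\right)$ means $\gamma\left(v_{i}\right)=c_{i}v_{\tau\left(i\right)}$ with every $c_{i}\neq0$. Applying \eqref{eq:gamma} to the pair $\left(v_{i},x\right)$ for an arbitrary $x\in V$ gives
\[
\left\langle v_{i},x\right\rangle =\left\langle x,\gamma\left(v_{i}\right)\right\rangle =c_{i}\left\langle x,v_{\tau\left(i\right)}\right\rangle ,
\]
and since $c_{i}\neq0$ this yields the equivalence $\left\langle v_{i},x\right\rangle =0\Leftrightarrow\left\langle x,v_{\tau\left(i\right)}\right\rangle =0$ for each $i$. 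Unwinding the orthogonals, $x\in W_{B}^{\perp_{R}}$ iff $\left\langle v_{i},x\right\rangle =0$ for all $i\in B$, while $x\in W_{\tau\left(B\right)}^{\perp_{L}}$ iff $\left\langle x,v_{\tau\left(i\right)}\right\rangle =0$ for all $i\in B$ (reading $\tau\left(B\right)$ as the index set $\left\{ \tau\left(i\right)\mid i\in B\right\} $); by the equivalence just obtained these conditions coincide, so $W_{B}^{\perp_{R}}=W_{\tau\left(B\right)}^{\perp_{L}}$. One may instead quote Remark \ref{rem:gamma} to rewrite \eqref{eq:gamma} as $\left\langle v_{i},v_{j}\right\rangle =c_{i}\left\langle v_{j},v_{\tau\left(i\right)}\right\rangle $ and expand $x$ in $\mathcal{B}$, which is the same computation; note that Lemma \ref{lem:gamDuals} by itself only gives $W_{B}^{\perp_{R}}=\gamma\left(W_{B}^{\perp_{L}}\right)$, which is not manifestly of the desired basis form because the form is not assumed monomial here.

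For \emph{part 2}, assume $\left\langle -,-\right\rangle $ is monomial; by Lemma \ref{lem:monomial} there are $\sigma\in\mathcal{S}_{n}$ and non-zero $d_{i}$ with $\left\langle v_{i},v_{j}\right\rangle =d_{i}\delta_{\sigma\left(i\right),j}$. I look for non-zero scalars $c_{i}$ such that, with $\tau:=\sigma^{2}$ and $\gamma:=T\left(\sigma^{2},c_{i}\right)$, identity \eqref{eq:gamma} holds; by Remark \ref{rem:gamma} it suffices to verify $\left\langle v_{i},v_{j}\right\rangle =c_{i}\left\langle v_{j},v_{\sigma^{2}\left(i\right)}\right\rangle $ for all $i,j$. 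Here the right-hand side equals $c_{i}d_{j}\delta_{\sigma\left(j\right),\sigma^{2}\left(i\right)}=c_{i}d_{j}\delta_{j,\sigma\left(i\right)}$ (apply $\sigma^{-1}$ inside the delta), while the left-hand side is $d_{i}\delta_{\sigma\left(i\right),j}$: both vanish unless $j=\sigma\left(i\right)$, and then the identity reads $d_{i}=c_{i}d_{\sigma\left(i\right)}$. So I set $c_{i}:=d_{i}d_{\sigma\left(i\right)}^{-1}$, which is non-zero, and with this choice the identity holds for all $i,j$; hence $\gamma=T\left(\sigma^{2},c_{i}\right)$ is a right monomial transformation satisfying \eqref{eq:gamma}, i.e. exactly the hypothesis of part 1 with $\tau=\sigma^{2}$.

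The only genuinely delicate point is the choice of $\tau$ in part 2: the Kronecker deltas on the two sides of the target identity must have the same support, and matching the conditions $j=\sigma\left(i\right)$ and $\sigma\left(j\right)=\tau\left(i\right)$ leaves no option other than $\tau=\sigma^{2}$; once $\tau$ is fixed, the scalars $c_{i}$ are then forced as above. Everything else is a direct substitution into \eqref{eq:gamma}, so I do not anticipate any real obstacle.
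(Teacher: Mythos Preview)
Your proof is correct and follows essentially the same approach as the paper: part 1 uses \eqref{eq:gamma} on $(v_i,x)$ to get the equivalence $\langle v_i,x\rangle=0\Leftrightarrow\langle x,v_{\tau(i)}\rangle=0$ and then unwinds the orthogonals, and part 2 sets $c_i:=d_i/d_{\sigma(i)}$ and checks the identity $\langle v_i,v_j\rangle=c_i\langle v_j,v_{\sigma^2(i)}\rangle$ via the same Kronecker-delta computation. Your additional remarks on why $\tau=\sigma^2$ is forced and why Lemma~\ref{lem:gamDuals} alone does not suffice are accurate and a nice complement, but the core argument is identical.
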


\begin{proof}
1) From the assumption one gets $\left\langle v_{i},v\right\rangle =c_{i}\left\langle v,v_{\tau\left(i\right)}\right\rangle $
for every $v\in V$. Since $c_{i}\neq0$ we deduce that $\left\langle v_{i},v\right\rangle =0$
if and only if $\left\langle v,v_{\tau\left(i\right)}\right\rangle =0$.
Now $v\in W_{B}^{\perp_{R}}$ if and only if $\left\langle v_{i},v\right\rangle =0$
for every $i\in B$ if and only if, by the foregoing, $\left\langle v,v_{\tau\left(i\right)}\right\rangle =0$
for every $i\in B$ if and only $v\in W_{\tau\left(B\right)}^{\perp_{L}}$.

2) By Lemma \ref{lem:monomial}, there exists a permutation $\sigma$
of $\left\{ 1,\dots,n\right\} $ and $\left\langle v_{i},v_{j}\right\rangle =d_{i}\delta_{\sigma\left(i\right),j}$
for some non-zero $d_{i}\in\Bbbk$. Then we can take $c_{i}:=\frac{d_{i}}{d_{\sigma\left(i\right)}}$
and compute
\[
\left\langle v_{j},c_{i}v_{\sigma^{2}\left(i\right)}\right\rangle =c_{i}d_{j}\delta_{\sigma\left(j\right),\sigma^{2}\left(i\right)}=c_{i}d_{j}\delta_{\sigma\left(i\right),j}=c_{i}d_{\sigma\left(i\right)}\delta_{\sigma\left(i\right),j}=d_{i}\delta_{\sigma\left(i\right),j}=\left\langle v_{i},v_{j}\right\rangle .
\]
\end{proof}
\begin{rem}
In the case of the Taft algebra $A$ we will provide a particular
$\left\langle -,-\right\rangle $ which satisfies the conditions in
Lemma \ref{lem:orthog} but not $\left\langle x,y\right\rangle =0\Leftrightarrow\left\langle y,x\right\rangle =0$
for every $x,y\in A$, cf. \cite[Theorem 6.2]{Jacobson}.

The proof of the following result is straightforward. We keep it for
the reader's sake.
\end{rem}

\begin{prop}
\label{prop:action}Let $A$ be an algebra. Let $V$ be a vector space
endowed with a non-degenarate bilinear form $\left\langle -,-\right\rangle $.
Let $W$ be a subspace of $V$.

1) If $V$ has a left $A$-module structure, then $V$ has (necessarily
unique) right $A$-module structures $\vartriangleleft$ and $\blacktriangleleft$
defined for every $x,y\in V,c\in A$, respectively, by
\[
\left\langle x\vartriangleleft c,y\right\rangle =\left\langle x,cy\right\rangle \qquad\text{and}\qquad\left\langle x,y\blacktriangleleft c\right\rangle =\left\langle cx,y\right\rangle .
\]

Moreover $\left\{ c\in A\mid\gamma\left(cx\right)=c\gamma\left(x\right),\forall x\in V\right\} =\left\{ c\in A\mid y\vartriangleleft c=y\blacktriangleleft c,\forall y\in V\right\} $.
In particular $\gamma$ is left $A$-linear if and only if $\vartriangleleft=\blacktriangleleft$.

2) If $V$ has a right $A$-module structure, then $V$ has (necessarily
unique) left $A$-module structures $\vartriangleright$ and $\blacktriangleright$
defined for every $x,y\in V,c\in A$, respectively, by
\[
\left\langle c\vartriangleright x,y\right\rangle =\left\langle x,yc\right\rangle \qquad\text{and}\qquad\left\langle x,c\blacktriangleright y\right\rangle =\left\langle xc,y\right\rangle .
\]

Moreover $\left\{ c\in A\mid\gamma\left(xc\right)=\gamma\left(x\right)c,\forall x\in V\right\} =\left\{ c\in A\mid c\vartriangleright y=c\blacktriangleright y,\forall y\in V\right\} $.
In particular $\gamma$ is right $A$-linear if and only if $\vartriangleright=\blacktriangleright$.

For $c\in A$, we have
\begin{align*}
\left(V\vartriangleleft c\right)^{\perp_{R}} & =\left(V\blacktriangleleft c\right)^{\perp_{L}}=\mathrm{Ann}_{_{\bullet\negmedspace}V}\left(c\right):=\left\{ v\in V\mid cv=0\right\} ,\\
\left(c\vartriangleright V\right)^{\perp_{R}} & =\left(c\blacktriangleright V\right)^{\perp_{L}}=\mathrm{Ann}_{V_{\bullet}}\left(c\right):=\left\{ v\in V\mid vc=0\right\} .
\end{align*}
Let $a,b\in A$ be such that $ab=1$. Then

\begin{align*}
\left(bW\right)^{\perp_{L}}=W^{\perp_{L}}\vartriangleleft a\qquad & \text{and}\qquad\left(W\vartriangleleft a\right)^{\perp_{R}}=bW^{\perp_{R}}\\
\left(W\blacktriangleleft a\right)^{\perp_{L}}=bW^{\perp_{L}}\qquad & \text{and}\qquad\left(bW\right)^{\perp_{R}}=W^{\perp_{R}}\blacktriangleleft a\\
\left(Wa\right)^{\perp_{L}}=b\vartriangleright W^{\perp_{L}}\qquad & \text{and}\qquad\left(b\vartriangleright W\right)^{\perp_{R}}=W^{\perp_{R}}a\\
\left(b\blacktriangleright W\right)^{\perp_{L}}=W^{\perp_{L}}a\qquad & \text{and}\qquad\left(Wa\right)^{\perp_{R}}=b\blacktriangleright W^{\perp_{R}}
\end{align*}
\end{prop}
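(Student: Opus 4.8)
The plan is to verify each of the listed identities directly from the defining adjunction formulas for $\vartriangleleft,\blacktriangleleft,\vartriangleright,\blacktriangleright$ together with the parenthetical formulas for annihilators, using only the hypothesis $ab=1$. First I would record the two annihilator statements, since they are the real content and the twelve displayed equations are essentially bookkeeping on top of them. For $\left(V\vartriangleleft c\right)^{\perp_R}$: by definition $x\in\left(V\vartriangleleft c\right)^{\perp_R}$ iff $\left\langle v\vartriangleleft c,x\right\rangle=0$ for all $v\in V$, and $\left\langle v\vartriangleleft c,x\right\rangle=\left\langle v,cx\right\rangle$, so this holds for all $v$ iff $cx=0$ (here non-degeneracy is used), i.e. $x\in\mathrm{Ann}_{\bullet V}(c)$. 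The identity $\left(V\blacktriangleleft c\right)^{\perp_L}=\mathrm{Ann}_{\bullet V}(c)$ is the mirror computation using $\left\langle cx,v\right\rangle=\left\langle x,v\blacktriangleleft c\right\rangle$, and the two lines involving $\vartriangleright,\blacktriangleright$ are proved the same way with $yc$ and $xc$ in place of $cx$.

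Next I would turn to the eight relations in the $ab=1$ block; I expect to prove one from each of the four lines and obtain its companion by the left/right symmetry already built into Proposition~\ref{prop:action}. Take $\left(bW\right)^{\perp_L}=W^{\perp_L}\vartriangleleft a$. For one inclusion: if $x\in W^{\perp_L}$ then for all $w\in W$, $\left\langle x\vartriangleleft a,bw\right\rangle=\left\langle x,abw\right\rangle=\left\langle x,w\right\rangle=0$, so $x\vartriangleleft a\in\left(bW\right)^{\perp_L}$; thus $W^{\perp_L}\vartriangleleft a\subseteq\left(bW\right)^{\perp_L}$. For the reverse, suppose $y\in\left(bW\right)^{\perp_L}$; I want to exhibit $y$ as $x\vartriangleleft a$ with $x\in W^{\perp_L}$. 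The natural candidate is $x:=y\vartriangleleft b$, since then $x\vartriangleleft a=y\vartriangleleft(ba)$ — but $ba$ need not equal $1$, so this is the subtle point and I would instead argue by dimension: the map $z\mapsto z\vartriangleleft a$ restricted to $W^{\perp_L}$ is injective (it has inverse-from-the-left behaviour because $\left\langle (z\vartriangleleft a)\vartriangleleft b,\cdot\right\rangle$ recovers enough, or more cleanly because $a$ acts invertibly on $V$ via $\vartriangleleft$ with two-sided inverse... this is where care is needed), hence $\dim(W^{\perp_L}\vartriangleleft a)=\dim W^{\perp_L}=\dim V-\dim W=\dim V-\dim(bW)=\dim\left(bW\right)^{\perp_L}$, the last two equalities using Lemma~\ref{lem:SwA.1} and the fact that $b\colon W\to bW$ is injective because $ab=1$. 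Combined with the inclusion already shown, equality follows.

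Then I would handle the remaining three lines analogously: for $\left(Wa\right)^{\perp_L}=b\vartriangleright W^{\perp_L}$ one computes, for $x\in W^{\perp_L}$ and $w\in W$, $\left\langle b\vartriangleright x,wa\right\rangle=\left\langle x,wab\right\rangle=\left\langle x,w\right\rangle=0$ (using $ab=1$ on the right), giving $b\vartriangleright W^{\perp_L}\subseteq\left(Wa\right)^{\perp_L}$, and again the dimension count closes the gap since $w\mapsto wa$ is injective on $W$ (if $wa=0$ then $w=wab=0$). The lines with $\blacktriangleleft$ and $\blacktriangleright$ are obtained by swapping the two arguments of the form throughout, which is exactly the left/right interchange in the statement of the proposition, so no new idea is required. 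The main obstacle, as indicated, is the reverse inclusions: the slick ``multiply by the inverse'' trick fails because only $ab=1$ holds, not $ba=1$, so I must either route everything through the dimension equalities of Lemma~\ref{lem:SwA.1} (cleanest) or verify directly that the relevant one-sided-invertible action still carries $W^{\perp}$ onto the target orthogonal; I would go with the dimension argument, checking in each of the four cases only that the relevant multiplication map is injective on $W$ (equivalently on $W^\perp$), which each time follows immediately from $ab=1$.
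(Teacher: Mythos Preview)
Your proposal is correct and follows essentially the same route as the paper. The only difference is in how you close the reverse inclusion: the paper simply records the identity $\langle x\vartriangleleft a,\,by\rangle=\langle x,aby\rangle=\langle x,y\rangle$, reads off the \emph{biconditional} $x\in W^{\perp_L}\Leftrightarrow x\vartriangleleft a\in(bW)^{\perp_L}$, and concludes $(bW)^{\perp_L}=W^{\perp_L}\vartriangleleft a$ directly. This last step tacitly uses that $\vartriangleleft a$ is a bijection of $V$, which is immediate since $(z\vartriangleleft a)\vartriangleleft b=z\vartriangleleft(ab)=z$ and $V$ is finite-dimensional; so the ``subtle point'' you flag is not really subtle---your dimension count works, but the biconditional already hands you both inclusions once you note that $\vartriangleleft a$ is surjective. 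The remaining lines are, as you say, obtained by the obvious left/right symmetry, exactly as the paper indicates.
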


\begin{proof}
We just prove the assertions involving $\vartriangleleft$, the other
ones being similar. Since $\phi:V\to V^{*}$ is an isomorphism and
$V^{*}$ is a right $A$-module we can endow $V$ with a unique right
$A$-module structure such that $\phi$ is right $A$-linear. The
right $A$-linearity of $\phi$ is equivalent to $\left\langle x\vartriangleleft c,y\right\rangle =\left\langle x,cy\right\rangle $,
for every $x,y\in V,c\in A$.

We compute

\begin{align*}
\left\langle x,y\blacktriangleleft c\right\rangle  & =\left\langle cx,y\right\rangle =\left\langle y,\gamma\left(cx\right)\right\rangle ;\\
\left\langle x,y\vartriangleleft c\right\rangle  & =\left\langle y\vartriangleleft c,\gamma\left(x\right)\right\rangle =\left\langle y,c\gamma\left(x\right)\right\rangle .
\end{align*}
Then$\gamma\left(cx\right)=c\gamma\left(x\right)$ for every $x\in V$
if and only if $y\blacktriangleleft c=y\vartriangleleft c$ for every
$y\in V$.

From $\left\langle x\vartriangleleft c,y\right\rangle =\left\langle x,cy\right\rangle $
it is clear that $\left(V\vartriangleleft c\right)^{\perp_{R}}=\mathrm{Ann}_{_{\bullet\negmedspace}V}\left(c\right)$.

Then
\[
\left\langle x\vartriangleleft a,by\right\rangle =\left\langle x,aby\right\rangle =\left\langle x,y\right\rangle
\]
so that $x\in W^{\perp_{L}}$ if and only if $x\vartriangleleft a\in\left(bW\right)^{\perp_{L}}$.
Thus $\left(bW\right)^{\perp_{L}}=W^{\perp_{L}}\vartriangleleft a$.
Similarly $\left(W\vartriangleleft a\right)^{\perp_{R}}=bW^{\perp_{R}}$.
\end{proof}

\section{Indecomposable ideals in $\Bbbk\left(\omega,N\right)$\label{sec:Indec-ideals}}

Let us consider the main example of algebra we will deal with. Let
$N\geq2$ be an integer, let $S$ be a finite set of cardinality at
least $N$ and let $\omega$ be a permutation on $S$.

Consider the $\Bbbk$-algebra $A=\Bbbk\left(\omega,N\right)$ generated
by $\left\{ e_{s},x\mid s\in S\right\} $ with relations, for every
$s,t\in S$,
\begin{align}
e_{s}e_{t} & =\delta_{s,t}e_{t},\qquad1_{A}=\sum_{s\in S}e_{s},\qquad x^{N}=0,\qquad e_{s}x=xe_{\omega\left(s\right)}.\label{eq:relA}
\end{align}
It is clear that $A$ has basis
\[
\mathcal{B}:=\left\{ x^{n}e_{s}\mid0\leq n\leq N-1,s\in S\right\} =\left\{ e_{t}x^{n}\mid0\leq n\leq N-1,t\in S\right\} ,
\]
 where the last equality follows by the relation $e_{s}x=xe_{\omega\left(s\right)}$,
for all $s\in S$.

Let $R=\Bbbk\left[x\mid x^{N}=0\right]$ be the subalgebra of $A$
generated by $x$.

Set also $H:=\Bbbk\left[e_{s}\mid s\in S\right]$.

Following the lines of \cite{CGL}, for $s\in S,t=0,\ldots,N-1$ we
set $N_{s,t}:=e_{s}x^{t}R.$
\begin{rem}
The algebra $A$ given above can be constructed as follows. Let $H=\Bbbk^{S}=\mathrm{Map}\left(S,\Bbbk\right)$
be the algebra of functions on the set $S$. Define $e_{s}:S\to\Bbbk$
by setting $e_{s}\left(t\right):=\delta_{s,t}$. Then $\left\{ e_{s}\mid s\in S\right\} $
is a basis of $H$ and for every $s,t\in S$, one has
\begin{align*}
e_{s}e_{t} & =\delta_{s,t}e_{t},\qquad1_{A}=\sum_{s\in S}e_{s}.
\end{align*}
Define the map $\varphi:H\to H$ by setting $\varphi\left(e_{s}\right):=e_{\omega^{-1}\left(s\right)}$.
It is easy to check that $\varphi$ is an algebra map. Consider the
Ore extension $H\left[X,\varphi\right]$ with zero derivation i.e.
$H\left[X\right]$ as an abelian group with multiplication induced
by $Xe_{s}=\varphi\left(e_{s}\right)X$. Then $A=\Bbbk\left(\omega,N\right)$
is the quotient of $H\left[X,\varphi\right]$ modulo the two-sided
ideal $\left\langle X^{N}\right\rangle $ generated by $X^{N}$, with
notation $x:=X+\left\langle X^{N}\right\rangle $ and by identifying
$e_{s}$ with its class modulo $\left\langle X^{N}\right\rangle $.
\end{rem}

Denote by $\mathcal{L}\left(M_{R}\right)$ the set of right $R$-submodules
of a given right $R$-module $M_{R}.$ Let $\mathcal{L}_{\mathrm{in}}\left(M_{R}\right)$
denote the set of right $R$-submodules which are indecomposable.
The first assertion of the following result was already proved for
Taft algebras in \cite[Section 2]{CGL}, see also \cite[Theorem 2.5]{CVZ}.
\begin{thm}
\label{thm:indecomposableGold}Consider the algebra $A=\Bbbk\left(\omega,N\right)$
and let $R$ be the subalgebra of $A$ generated by $x.$ Then the
$N_{s,t}$'s form an irredundant set of representatives of $\mathcal{L}_{\mathrm{in}}\left(A_{A}\right)$
and
\[
\mathcal{L}_{\mathrm{in}}\left(A_{A}\right)=\left\{ \left(1+rx\right)N_{s,t}\mid r\in R,s\in S,0\leq t\leq N-1\right\} .
\]
\end{thm}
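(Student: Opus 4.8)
First I would determine the lattice of right $A$-submodules of each indecomposable projective $e_{s}A=N_{s,0}$. In the basis $\{e_{s}x^{n}\mid 0\le n\le N-1\}$ of $e_{s}A$ the right action reads $e_{s}x^{n}\cdot x=e_{s}x^{n+1}$ (with $e_{s}x^{N}=0$) and $e_{s}x^{n}\cdot e_{u}=\delta_{\omega^{n}(s),u}\,e_{s}x^{n}$. Given a nonzero submodule $M$, let $t$ be minimal such that $e_{s}x^{t}$ appears in some $m=\sum_{n\ge t}c_{n}e_{s}x^{n}\in M$ with $c_{t}\neq 0$; then $m\cdot x^{N-1-t}=c_{t}e_{s}x^{N-1}$ (all other terms die), so $e_{s}x^{N-1}\in M$, and subtracting and iterating with $x^{N-2-t},\dots,x^{0}$ yields $e_{s}x^{N-1},\dots,e_{s}x^{t}\in M$, i.e. $M=N_{s,t}$; this argument uses no assumption on the cycle lengths of $\omega$. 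Hence $e_{s}A$ is uniserial, $N_{s,0}\supsetneq N_{s,1}\supsetneq\cdots\supsetneq N_{s,N-1}\supsetneq 0$, with $\mathrm{rad}\,N_{s,t}=N_{s,t+1}$ and one-dimensional top $N_{s,t}/N_{s,t+1}$ on which $e_{u}$ acts by $\delta_{\omega^{t}(s),u}$. Since $A=\bigoplus_{s\in S}e_{s}A$, while $A^{\mathrm{op}}\cong\Bbbk(\omega^{-1},N)$ so that symmetrically each $Ae_{s}$ is uniserial too, $A$ is an Artinian serial (Nakayama) algebra.

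\textbf{First assertion.} Over an Artinian serial algebra every indecomposable module is uniserial, isomorphic to $P/\mathrm{rad}^{j}P$ for an indecomposable projective $P$ and $1\le j\le\mathrm{length}(P)$, and such modules are determined up to isomorphism by their top and their length (classical fact, Nakayama). Here the indecomposable projectives are the $e_{s}A$, each of length $N$, so $A$ has exactly $N|S|$ indecomposable modules up to isomorphism. Now each $N_{s,t}$ is a right ideal which, already as a right $R$-module, is isomorphic to $R/x^{N-t}R\cong\Bbbk[x]/(x^{N-t})$, hence indecomposable (a fortiori as an $A$-module); it has dimension $N-t$, and by the previous paragraph its top determines $\omega^{t}(s)$, hence $s$. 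Therefore the $N|S|$ right ideals $N_{s,t}$ ($s\in S$, $0\le t\le N-1$) are pairwise non-isomorphic, and being the correct number of them they form an irredundant set of representatives of all indecomposable $A$-modules, in particular of all indecomposable right ideals.

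\textbf{Second assertion.} If $r\in R$ then $(rx)^{N}=r^{N}x^{N}=0$ (as $x$ commutes with $r$), so $1+rx$ is a unit of $A$ and left multiplication by it is a right $A$-linear automorphism of $A$ sending $N_{s,t}$ to the right ideal $(1+rx)N_{s,t}\cong N_{s,t}$, which is thus indecomposable; this gives the inclusion $\supseteq$. For $\subseteq$, let $I\neq 0$ be an indecomposable right ideal. By the first assertion $I$ is uniserial, hence cyclic: $I=yA$ with $y\notin\mathrm{rad}\,I$. Writing $y=\sum_{u\in S}r_{u}e_{u}$ with $r_{u}\in R$, we have $ye_{u}=r_{u}e_{u}\in I$, so each $r_{u}e_{u}A$ is a submodule of $I$ and $I=\sum_{u\in S}r_{u}e_{u}A$; uniseriality forces this finite family to be a chain, whence $I=re_{u}A$ for a single $u$ with $r:=r_{u}\neq 0$. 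As $R$ is local with maximal ideal $(x)$, write $r=c\,x^{a}(1+r'x)$ with $c\in\Bbbk\setminus\{0\}$, $r'\in R$ and $0\le a\le N-1$, so that $I=x^{a}(1+r'x)e_{u}A$. Finally the relations $e_{s}x=xe_{\omega(s)}$ and $xr'=r'x$ give the identity $(1+r'x)e_{s}x^{a}=x^{a}(1+r'x)e_{\omega^{a}(s)}$; taking $s:=\omega^{-a}(u)$ and using $e_{s}x^{a}A=N_{s,a}$ we conclude $I=x^{a}(1+r'x)e_{u}A=(1+r'x)e_{s}x^{a}A=(1+r'x)N_{s,a}$.

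\textbf{Main obstacle.} The crux is the passage, in the last paragraph, from $I=\sum_{u}r_{u}e_{u}A$ to a single summand $re_{u}A$: this is precisely where uniseriality — hence the serial-algebra input of the first two paragraphs — is needed, after which the leading-unit factorization $r=c\,x^{a}(1+r'x)$ over the local ring $R$, together with the commutation identity $e_{s}x=xe_{\omega(s)}$, is exactly what rewrites $re_{u}A$ into the normal form $(1+r'x)N_{s,a}$. The remaining ingredients — isolating the basis vectors of $e_{s}A$ when $\omega$ may have short cycles, the identification $A^{\mathrm{op}}\cong\Bbbk(\omega^{-1},N)$, and the bookkeeping with units — are routine but should be carried out with care.
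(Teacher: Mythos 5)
Your proof is correct, and while it rests on the same structural fact as the paper -- that each $e_{s}A=e_{s}R$ is uniserial with submodule chain $N_{s,0}\supsetneq N_{s,1}\supsetneq\cdots\supsetneq N_{s,N-1}\supsetneq 0$, so that $A$ is serial -- the way you exploit it is genuinely different. The paper applies Facchini's decomposition theorem for submodules of serial modules (its Theorem 3.29) together with Krull--Schmidt--Remak--Azumaya to place an indecomposable ideal $M$ inside a uniserial summand $P_{w}\cong e_{s}A$, and then pins down the isomorphism $f_{w}$ by a dimension count, obtaining $f_{w}=$ left multiplication by $1+rx$ and hence both assertions simultaneously. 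You instead invoke the classical Nakayama classification (every indecomposable over an Artinian serial ring is $P/\mathrm{rad}^{j}P$, determined by top and length), deduce irredundancy of the $N_{s,t}$ by counting tops and lengths, and then prove the explicit description by hand: an indecomposable ideal is uniserial, hence cyclic with generator $y=\sum_{u}r_{u}e_{u}$; uniseriality collapses $\sum_{u}r_{u}e_{u}A$ to a single $re_{u}A$; and the factorization $r=c\,x^{a}(1+r'x)$ in the local ring $R$ together with $e_{s}x=xe_{\omega(s)}$ rewrites this as $(1+r'x)N_{\omega^{-a}(u),a}$. All the computational steps you use check out (the direct determination of the submodules of $e_{s}A$ with no hypothesis on the cycle lengths of $\omega$, the identification $A^{\mathrm{op}}\cong\Bbbk(\omega^{-1},N)$, $e_{s}x^{a}A=N_{s,a}$, and the invertibility of $1+rx$). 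What each approach buys: yours avoids Facchini's theorem and the Krull--Schmidt argument and makes the normal form $(1+rx)N_{s,t}$ completely explicit from a generator, at the cost of citing the Nakayama structure theorem for the key fact that indecomposables are uniserial; the paper's route trades that citation for the decomposition theorem and gets the normal form indirectly from a dimension count on $P_{w}$.
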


\begin{proof}
We have already observed that $A$ has basis
\[
\mathcal{B}:=\left\{ x^{n}e_{s}\mid0\leq n\leq N-1,s\in S\right\} =\left\{ e_{t}x^{n}\mid0\leq n\leq N-1,t\in S\right\} .
\]

Let us show that $A$ is a serial ring. Since $1_{A}=\sum_{s\in S}e_{s}$
and the $e_{s}$'s are orthogonal idempotents, we have that $A=\bigoplus_{s\in S}e_{s}A.$
Thus each direct summand $e_{s}A$ of $A$ is projective too. By \eqref{eq:relA},
we deduce $e_{s}A=e_{s}R$ and hence $\mathcal{L}\left(e_{s}A_{A}\right)=\mathcal{L}\left(e_{s}R_{R}\right)$.
The map
\[
g_{s}:R\rightarrow e_{s}R:r\mapsto e_{s}r
\]
is an isomorphism of right $R$-modules so that it induces a bijection
\[
\mathcal{L}\left(R_{R}\right)\rightarrow\mathcal{L}\left(e_{s}R_{R}\right):I\mapsto g_{s}\left(I\right)=e_{s}I.
\]
Since $\mathcal{L}\left(R_{R}\right)=\left\{ x^{t}R\mid0\leq t\leq N\right\} $
we get
\[
\mathcal{L}\left(e_{s}A_{A}\right)=\mathcal{L}\left(e_{s}R_{R}\right)=\left\{ e_{s}x^{t}R\mid0\leq t\leq N\right\} =\left\{ N_{s,t}\mid0\leq t\leq N-1\right\} \cup\left\{ 0\right\} .
\]
Since $N_{s,t}\subseteq N_{s,w}$ for $t\geq w,$ we deduce that $\mathcal{L}\left(e_{s}A_{A}\right)$
is totally ordered. Thus $e_{s}A$ is uniserial for every $s\in S$
and hence $A$ is a right serial ring. Similarly one proves it is
a left serial ring. Thus it is a serial ring as claimed.

Note that, since uniserial implies indecomposable, we get that
\[
\mathcal{L}_{\mathrm{in}}\left(e_{s}A_{A}\right)=\mathcal{L}\left(e_{s}A_{A}\right)\setminus\left\{ 0\right\} =\left\{ N_{s,t}\mid0\leq t\leq N-1\right\} .
\]

Let $M\in\mathcal{L}_{\mathrm{in}}\left(A_{A}\right).$ Apply \cite[Theorem 3.29]{Fa}
to $P=A_{A}$. Then there is a decomposition $A_{A}=P_{1}\oplus\cdots\oplus P_{l}$
into uniserial projective modules such that $M=\left(M\cap P_{1}\right)\oplus\cdots\oplus\left(M\cap P_{l}\right).$
Since $M$ is indecomposable, we get $M=M\cap P_{w}$ for some $w.$
Hence $M\subseteq P_{w}.$ Since each $P_{i}$ is uniserial, it is
in particular indecomposable.

Note that $\mathrm{End}_{A}\left(e_{s}A\right)\cong e_{s}Ae_{s}\overset{\eqref{eq:relA}}{=}e_{s}Re_{s}$.
Since $R=\Bbbk\left[x\mid x^{N}=0\right]$ is a local ring, we have
that $e_{s}R$ is a local right $R$-module so that, by \cite[Theorem 1.11]{Fa},
we get that $e_{s}Re_{s}$ is a local ring.

By Krull-Schmidt-Remak-Azumaya Theorem (cf. \cite[Theorem 2.12]{Fa}),
the two decompositions $\bigoplus_{s\in S}e_{s}A=A_{A}=P_{1}\oplus\cdots\oplus P_{l}$
are necessarily isomorphic so that $P_{w}\cong e_{s}A$ for some $s\in S.$
Let $f_{w}:e_{s}A\rightarrow P_{w}$ this isomorphism of right $A$-modules.

We have $f_{w}\left(e_{s}\right)=f_{w}\left(e_{s}e_{s}\right)=f_{w}\left(e_{s}\right)e_{s}=Ae_{s}=Re_{s}$
so that we can write $f_{w}\left(e_{s}\right)=ue_{s}$ for some $u\in R.$
We can assume that $u$ has the form $u=\left(1+rx\right)x^{t}$ for
some $w$ so that $f_{w}\left(e_{s}\right)=\left(1+rx\right)x^{t}e_{s}$.
Thus
\begin{align*}
P_{w} & =f_{w}\left(e_{s}A\right)=f_{w}\left(e_{s}\right)A=\left(1+rx\right)x^{t}e_{s}A=\left(1+rx\right)x^{t}e_{s}R\\
 & =\left(1+rx\right)e_{\omega^{-t}\left(s\right)}x^{t}R=\left(1+rx\right)N_{\omega^{-t}\left(s\right),t}.
\end{align*}
Via $f_{w}$ we have that $P_{w}\cong e_{s}A=e_{s}R=N_{s,0}$. Thus
\begin{align*}
N & =\mathrm{dim}_{\Bbbk}\left(N_{s,0}\right)=\mathrm{dim}_{\Bbbk}\left(P_{w}\right)\\
 & =\mathrm{dim}_{\Bbbk}\left(\left(1+rx\right)N_{\omega^{-t}\left(s\right),t}\right)=\mathrm{dim}_{\Bbbk}\left(N_{\omega^{-t}\left(s\right),t}\right)=N-t
\end{align*}
so that $t=0$ and hence $P_{w}=\left(1+rx\right)N_{s,0}$ and $f_{w}\left(e_{s}\right)=\left(1+rx\right)e_{s}$.
Hence $f_{w}\left(a\right)=\left(1+rx\right)a$ for every $a\in e_{s}A$.

The isomorphism $f_{w}$ induces a bijection
\[
\mathcal{L}_{\mathrm{in}}\left(e_{s}A_{A}\right)\rightarrow\mathcal{L}_{\mathrm{in}}\left(\left(P_{w}\right)_{A}\right):I\mapsto f_{w}\left(I\right)=\left(1+rx\right)I.
\]
Since $M$ is indecomposable and $M\subseteq P_{w}$ then $M\in\mathcal{L}_{\mathrm{in}}\left(\left(P_{w}\right)_{A}\right)$
so that its comes out to be isomorphic to an element of $\mathcal{L}_{\mathrm{in}}\left(e_{s}A_{A}\right)=\left\{ N_{s,t}\mid s\in S,0\leq t\leq N-1\right\} .$
This proves that the $N_{s,t}$'s form an irredundant set of representatives
of $\mathcal{L}_{\mathrm{in}}\left(A_{A}\right).$ Moreover $M=\left(1+rx\right)N_{s,t}$
for some $s\in S,0\leq t\leq N-1$.

We have so proved that $\mathcal{L}_{\mathrm{in}}\left(A_{A}\right)\subseteq\left\{ \left(1+rx\right)N_{s,t}\mid r\in R,s\in S,0\leq t\leq N-1\right\} .$

For the other inclusion, since $\left(1+rx\right)N_{s,t}\cong N_{s,t}$
and $N_{s,t}\in\mathcal{L}_{\mathrm{in}}\left(e_{s}A_{A}\right)$,
we get that $\left(1+rx\right)N_{s,t}$ is indecomposable.
\end{proof}
In the rest of this section we fix a permutation $\mu$ on $S$, a
permutation $\nu$ on $\left\{ 0,1,\dots,N-1\right\} $ and define
a bilinear form $\left\langle -,-\right\rangle :A\times A\to\Bbbk$
by setting, for $s,t\in S,0\leq m,n\leq N-1$,
\begin{equation}
\left\langle e_{s}x^{m},e_{t}x^{n}\right\rangle :=d_{\left(s,m\right)}\delta_{\mu\left(s\right),t}\delta_{\nu\left(m\right),n}\label{eq:formcan}
\end{equation}
where $d_{\left(s,m\right)}\in\Bbbk\setminus\left\{ 0\right\} $ for
all $s,m$.
\begin{lem}
\label{lem:scalarcan}The form $\left\langle -,-\right\rangle $ is
monomial with respect to the basis
\[
\mathcal{B}=\left\{ e_{s}x^{m}\mid s\in S,0\leq m\leq N-1\right\} .
\]
Moreover the Nakayama isomorphism $\gamma$ is given by
\[
\gamma\left(e_{s}x^{m}\right)=\frac{d_{\left(s,m\right)}}{d_{\left(\mu\left(s\right),\nu\left(m\right)\right)}}e_{\mu^{2}\left(s\right)}x^{\nu^{2}\left(m\right)}\qquad\forall s,m.
\]
\end{lem}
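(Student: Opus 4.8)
The plan is to recognize the form \eqref{eq:formcan} as a monomial form by re-indexing the basis. Write $\mathcal{B}=\left\{v_{(s,m)}:=e_{s}x^{m}\mid(s,m)\in I\right\}$, where $I:=S\times\left\{0,\dots,N-1\right\}$, and let $\Sigma\colon I\to I$ be the map $\Sigma(s,m):=(\mu(s),\nu(m))$. Since $\mu$ and $\nu$ are permutations acting on disjoint coordinates, $\Sigma$ is a permutation of $I$. The defining identity \eqref{eq:formcan} then reads $\left\langle v_{(s,m)},v_{(t,n)}\right\rangle=d_{(s,m)}\,\delta_{\Sigma(s,m),(t,n)}$, because $\delta_{\mu(s),t}\delta_{\nu(m),n}=1$ exactly when $(t,n)=(\mu(s),\nu(m))=\Sigma(s,m)$. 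This is precisely condition (1) of Lemma \ref{lem:monomial} (with the finite index set $I$ in place of $\left\{1,\dots,n\right\}$, which is harmless after fixing any bijection $I\cong\left\{1,\dots,|I|\right\}$), with nonzero scalars $d_{(s,m)}$; hence $\left\langle-,-\right\rangle$ is monomial with respect to $\mathcal{B}$.

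For the Nakayama isomorphism I would reuse the bookkeeping in the proof of part 2 of Lemma \ref{lem:orthog}: it shows that $\gamma:=T\left(\Sigma^{2},c_{(s,m)}\right)$ with $c_{(s,m)}:=d_{(s,m)}/d_{\Sigma(s,m)}$ satisfies \eqref{eq:gamma}. Note $c_{(s,m)}$ is a ratio of nonzero scalars, so $\gamma$ is indeed a (hence invertible) right monomial transformation. Since $\Sigma^{2}(s,m)=(\mu^{2}(s),\nu^{2}(m))$ and $d_{\Sigma(s,m)}=d_{(\mu(s),\nu(m))}$, this linear map is
\[
\gamma\left(e_{s}x^{m}\right)=\frac{d_{(s,m)}}{d_{(\mu(s),\nu(m))}}\,e_{\mu^{2}(s)}x^{\nu^{2}(m)}.
\]
By the uniqueness of the map satisfying \eqref{eq:gamma} observed right after that equation, this is the Nakayama isomorphism, as claimed. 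Alternatively one may verify \eqref{eq:gamma} directly on basis elements: $\left\langle v_{(t,n)},\gamma(v_{(s,m)})\right\rangle=c_{(s,m)}\left\langle v_{(t,n)},v_{\Sigma^{2}(s,m)}\right\rangle=c_{(s,m)}d_{(t,n)}\delta_{\Sigma(t,n),\Sigma^{2}(s,m)}=c_{(s,m)}d_{(t,n)}\delta_{(t,n),\Sigma(s,m)}$, and when this delta is nonzero one has $d_{(t,n)}=d_{\Sigma(s,m)}$, so the coefficient collapses to $d_{(s,m)}$, giving $d_{(s,m)}\delta_{(t,n),\Sigma(s,m)}=\left\langle v_{(s,m)},v_{(t,n)}\right\rangle$.

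There is no genuine obstacle here: once the basis is indexed by pairs and $\Sigma=\mu\times\nu$ is seen to be a permutation of $I$, both assertions are immediate transcriptions of Lemma \ref{lem:monomial} and of the computation in Lemma \ref{lem:orthog}(2). The only point requiring a little care is the harmless translation between the abstract index set $\left\{1,\dots,n\right\}$ used in Section \ref{sec:Bilinear-forms} and the concrete index set $I=S\times\left\{0,\dots,N-1\right\}$ used here, together with keeping straight which composite permutation ($\Sigma$ versus $\Sigma^{2}$) and which ratio of the $d_{(s,m)}$'s enters the formula for $\gamma$.
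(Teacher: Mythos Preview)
Your proof is correct and follows essentially the same route as the paper's own argument: both set $v_{(s,m)}=e_{s}x^{m}$, recognize the product permutation $\Sigma=\mu\times\nu$ (the paper calls it $\sigma$) so that \eqref{eq:formcan} becomes the monomiality condition of Lemma~\ref{lem:monomial}, and then invoke the computation in Lemma~\ref{lem:orthog}(2) to obtain $\gamma=T(\Sigma^{2},d_{(s,m)}/d_{\Sigma(s,m)})$, identifying it with the Nakayama isomorphism by uniqueness. Your additional direct verification of \eqref{eq:gamma} on basis elements is a nice sanity check but not needed.
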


\begin{proof}
Set $v_{\left(s,m\right)}:=e_{s}x^{m}$. Then $\left\langle v_{\left(s,m\right)},v_{\left(t,n\right)}\right\rangle =d_{\left(s,m\right)}\delta_{\sigma\left(\left(s,m\right)\right),\left(t,n\right)}$
where $\sigma$ is the permutation defined by $\sigma:=\mu\times\nu:\left(s,m\right)\mapsto\left(\mu\left(s\right),\nu\left(m\right)\right)$.

Thus, by definition, the form $\left\langle -,-\right\rangle $ is
monomial with respect to $\mathcal{B}$. By Lemma \ref{lem:orthog},
there is a right monomial transformation $\gamma=T\left(\tau,c_{\left(s,m\right)}\right)$
(which is necessarily the Nakayama isomorphism) such that \eqref{eq:gamma}
holds true for $\tau:=\sigma^{2}=\mu^{2}\times\nu^{2}$ and
\begin{align*}
c_{\left(s,m\right)} & =\frac{d_{\left(s,m\right)}}{d_{\sigma\left(\left(s,m\right)\right)}}=\frac{d_{\left(s,m\right)}}{d_{\left(\mu\left(s\right),\nu\left(m\right)\right)}}.
\end{align*}
Thus $\gamma\left(v_{\left(s,m\right)}\right)=c_{\left(s,m\right)}v_{\tau\left(\left(s,m\right)\right)}$
and hence $\gamma\left(e_{s}x^{m}\right)=\frac{d_{\left(s,m\right)}}{d_{\left(\mu\left(s\right),\nu\left(m\right)\right)}}e_{\mu^{2}\left(s\right)}x^{\nu^{2}\left(m\right)}$.
\end{proof}
Apply Proposition \ref{prop:action} to $V=A$ and $\left\langle -,-\right\rangle $
as above. Then $A$ has a unique right $A$-module structure $\vartriangleleft$
defined by $\left\langle x\vartriangleleft c,y\right\rangle =\left\langle x,cy\right\rangle $,
for every $x,y,c\in A$ and a unique right $A$-modules structure
$\blacktriangleleft$ defined by $\left\langle x,y\blacktriangleleft c\right\rangle =\left\langle cx,y\right\rangle $.

Moreover, if $a\in A$ is a unit, for every subspace $W$ of $A$
one has
\begin{align*}
\left(aW\right)^{\perp_{L}} & =W^{\perp_{L}}\vartriangleleft a^{-1}\qquad\text{and}\qquad\left(aW\right)^{\perp_{R}}=W^{\perp_{R}}\blacktriangleleft a^{-1}.
\end{align*}
In view of Theorem \ref{thm:indecomposableGold}, we know that
\[
\mathcal{L}_{\mathrm{in}}\left(A_{A}\right)=\left\{ \left(1+rx\right)N_{s,t}\mid r\in R,s\in S,0\leq t\leq N-1\right\}
\]
so that we can compute the orthogonals of all indecomposable right
ideals as
\begin{align}
\left(\left(1+rx\right)N_{s,t}\right)^{\perp_{L}} & =N_{s,t}^{\perp_{L}}\vartriangleleft\left(1+rx\right)^{-1}\label{eq:orthNleft}\\
\left(\left(1+rx\right)N_{s,t}\right)^{\perp_{R}} & =N_{s,t}^{\perp_{R}}\blacktriangleleft\left(1+rx\right)^{-1}.\label{eq:orthNright}
\end{align}
It remains to compute $N_{s,m}^{\perp_{L}}$ and $N_{s,m}^{\perp_{R}}$.
\begin{thm}
\label{thm:orthog-indec}Let $s\in S$ and $m$ be an integer such
that $0\le m\le N-1$. We have
\begin{align*}
N_{s,0}^{\perp_{R}} & =N_{\mu^{2}\left(s\right),0}^{\perp_{L}}=\bigoplus_{t\neq\mu\left(s\right)}N_{t,0}.
\end{align*}
 Assume $\nu\left(m\right):=N-1-m$ for all $m$. Then
\begin{align*}
N_{s,m}^{\perp_{R}} & =N_{\mu^{2}\left(s\right),m}^{\perp_{L}}\qquad\text{and}\qquad N_{s,m}^{\perp_{R}}=N_{s,0}^{\perp_{R}}\oplus N_{\mu\left(s\right),N-m}.
\end{align*}
\end{thm}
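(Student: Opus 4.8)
The plan is to realize each $N_{s,m}$ as a subspace spanned by a subset of the monomial basis $\mathcal{B}$ and then combine Lemma~\ref{lem:orthog} with a short direct computation of the orthogonal. Writing $v_{(s,m)}:=e_sx^m$ as in the proof of Lemma~\ref{lem:scalarcan}, the relation $x^N=0$ gives $N_{s,m}=e_sx^mR=\mathrm{Span}_{\Bbbk}\{v_{(s,k)}\mid m\le k\le N-1\}$, so $N_{s,m}=W_{B_{s,m}}$ with $B_{s,m}=\{s\}\times\{m,m+1,\dots,N-1\}$ in the notation of Lemma~\ref{lem:orthog}. By Lemma~\ref{lem:scalarcan} the form is monomial with $\sigma=\mu\times\nu$ and the Nakayama isomorphism is the right monomial transformation $\gamma=T(\sigma^2,c_{(s,m)})$ satisfying \eqref{eq:gamma}; hence Lemma~\ref{lem:orthog} applies with $\tau=\sigma^2=\mu^2\times\nu^2$ and yields $N_{s,m}^{\perp_{R}}=W_{\sigma^2(B_{s,m})}^{\perp_{L}}$, where $\sigma^2(B_{s,m})=\{\mu^2(s)\}\times\{\nu^2(m),\nu^2(m+1),\dots,\nu^2(N-1)\}$. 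For $m=0$ the set $\{\nu^2(0),\dots,\nu^2(N-1)\}$ is all of $\{0,\dots,N-1\}$ since $\nu^2$ permutes $\{0,\dots,N-1\}$, so $W_{\sigma^2(B_{s,0})}=N_{\mu^2(s),0}$ and the identity $N_{s,0}^{\perp_{R}}=N_{\mu^2(s),0}^{\perp_{L}}$ follows with no assumption on $\nu$. If moreover $\nu(m)=N-1-m$ for all $m$, then $\nu$ is an involution, so $\nu^2=\mathrm{id}$, $\sigma^2(B_{s,m})=\{\mu^2(s)\}\times\{m,\dots,N-1\}$, and hence $N_{s,m}^{\perp_{R}}=N_{\mu^2(s),m}^{\perp_{L}}$.

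It then remains to compute $N_{s,m}^{\perp_{R}}$ explicitly. For a generic $v=\sum_{t,n}\lambda_{(t,n)}e_tx^n$, formula~\eqref{eq:formcan} gives $\left\langle e_sx^k,v\right\rangle=d_{(s,k)}\lambda_{(\mu(s),\nu(k))}$, so $v\in N_{s,m}^{\perp_{R}}$ if and only if $\lambda_{(\mu(s),\nu(k))}=0$ for every $k$ with $m\le k\le N-1$. When $m=0$, $\nu$ being a bijection this forces $\lambda_{(\mu(s),n)}=0$ for all $n$, i.e.\ $v\in\bigoplus_{t\neq\mu(s)}e_tR=\bigoplus_{t\neq\mu(s)}N_{t,0}$; together with the first paragraph this gives $N_{s,0}^{\perp_{R}}=N_{\mu^2(s),0}^{\perp_{L}}=\bigoplus_{t\neq\mu(s)}N_{t,0}$. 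When $\nu(k)=N-1-k$, one has $\{\nu(k)\mid m\le k\le N-1\}=\{0,1,\dots,N-1-m\}$, so the condition becomes $\lambda_{(\mu(s),n)}=0$ for $0\le n\le N-1-m$; equivalently, the $e_{\mu(s)}$-component of $v$ lies in $\mathrm{Span}_{\Bbbk}\{e_{\mu(s)}x^n\mid N-m\le n\le N-1\}=e_{\mu(s)}x^{N-m}R=N_{\mu(s),N-m}$. Hence $N_{s,m}^{\perp_{R}}=\bigl(\bigoplus_{t\neq\mu(s)}N_{t,0}\bigr)\oplus N_{\mu(s),N-m}=N_{s,0}^{\perp_{R}}\oplus N_{\mu(s),N-m}$, with the convention $N_{\mu(s),N}=0$ which makes the case $m=0$ consistent.

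The whole argument is essentially bookkeeping; the only step that needs care is the index identity $\{N-1-k\mid m\le k\le N-1\}=\{0,\dots,N-1-m\}$ and its translation into the assertion that the $e_{\mu(s)}$-isotypic part of $v$ is forced into $e_{\mu(s)}x^{N-m}R$, together with the observation that the hypothesis $\nu(m)=N-1-m$ is exactly what makes $\nu$ an involution and thus collapses $\nu^2$ to the identity in the first paragraph. As a consistency check, Lemma~\ref{lem:SwA.1} gives $\dim_{\Bbbk}N_{s,m}^{\perp_{R}}=N|S|-(N-m)$, which agrees with $\dim_{\Bbbk}\bigl(\bigoplus_{t\neq\mu(s)}N_{t,0}\oplus N_{\mu(s),N-m}\bigr)=(|S|-1)N+m$.
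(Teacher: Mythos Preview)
Your proof is correct and follows essentially the same approach as the paper: you realize $N_{s,m}$ as the span of a subset of the monomial basis, invoke Lemma~\ref{lem:orthog} with $\tau=\sigma^{2}=\mu^{2}\times\nu^{2}$ (via Lemma~\ref{lem:scalarcan}) to obtain the identity $N_{s,m}^{\perp_{R}}=N_{\mu^{2}(s),m}^{\perp_{L}}$, and then identify the orthogonal explicitly. The only minor difference is in this last step: the paper shows one inclusion (e.g.\ $\bigoplus_{t\neq\mu(s)}N_{t,0}\subseteq N_{s,0}^{\perp_{R}}$) and closes by a dimension count, whereas you read off the orthogonal directly from the coefficient condition $\lambda_{(\mu(s),\nu(k))}=0$, which gives both inclusions at once and makes the dimension count a redundancy check rather than part of the argument.
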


\begin{proof}
Note that $N_{s,0}:=e_{s}R$ has basis $B=\left\{ e_{s}x^{m}\mid0\leq m\leq N-1\right\} $
which is a subset of the basis $\mathcal{B}=\left\{ e_{s}x^{m}\mid s\in S,0\leq m\leq N-1\right\} $.
In view of Lemma \ref{lem:scalarcan}, this basis satisfies the conditions
of Lemma \ref{lem:orthog} for $\tau=\mu^{2}\times\nu^{2}$, so that
\begin{align*}
\tau\left(B\right) & =\left\{ e_{\mu^{2}\left(s\right)}x^{\nu^{2}\left(m\right)}\mid0\leq m\leq N-1\right\} =\left\{ e_{\mu^{2}\left(s\right)}x^{n}\mid0\leq n\leq N-1\right\}
\end{align*}
 and hence $N_{s,0}^{\perp_{R}}=W_{B}^{\perp_{R}}=W_{\tau\left(B\right)}^{\perp_{L}}=N_{\mu^{2}\left(s\right),0}^{\perp_{L}}$.

By definition, we have
\[
\left\langle e_{s}x^{m},e_{t}x^{n}\right\rangle :=d_{\left(s,m\right)}\delta_{\mu\left(s\right),t}\delta_{\nu\left(m\right),n}
\]
so if $t\neq\mu\left(s\right)$ then $N_{t,0}\subseteq N_{s,0}^{\perp_{R}}$
and hence $\bigoplus_{t\neq\mu\left(s\right)}N_{t,0}\subseteq N_{s,0}^{\perp_{R}}$.
Now, counting dimensions we get
\begin{align*}
\dim_{\Bbbk}\left(\bigoplus_{t\neq\mu\left(s\right)}N_{t,0}\right) & =\left(\sum_{t\neq\mu\left(s\right)}N\right)=N\left(\left|S\right|-1\right)=N\left|S\right|-N\\
 & =N\left|S\right|-\dim_{\Bbbk}N_{s,0}=\dim_{\Bbbk}N_{s,0}^{\perp_{R}}
\end{align*}
which implies the equality whence $N_{s,0}^{\perp_{R}}=\bigoplus_{t\neq\mu\left(s\right)}N_{t,0}$.

Assume $\nu\left(m\right):=N-1-m$ for all $m$ (note that $\nu^{2}=\mathrm{Id}$).
Since $N_{s,m}:=e_{s}x^{m}R$ has basis $B=\left\{ e_{s}x^{t}\mid m\leq t\leq N-1\right\} $,
we get
\begin{align*}
\tau\left(B\right) & =\left\{ e_{\mu^{2}\left(s\right)}x^{\nu^{2}\left(t\right)}\mid m\leq t\leq N-1\right\} =\left\{ e_{\mu^{2}\left(s\right)}x^{t}\mid m\leq t\leq N-1\right\}
\end{align*}
and hence $N_{s,m}^{\perp_{R}}=N_{\mu^{2}\left(s\right),m}^{\perp_{L}}$.

We now compute $N_{s,m}^{\perp_{R}}$.

If $t=\mu\left(s\right)$ then if $n\ge\nu\left(m\right)+1=N-m$ we
get $\left\langle e_{t}x^{n},e_{s}x^{m}\right\rangle =0$, so $N_{\mu\left(s\right),N-m}\subseteq N_{s,m}^{\perp_{R}}$.
Since $N_{s,0}^{\perp_{R}}=\bigoplus_{t\neq\mu\left(s\right)}N_{t,0}$
we have that $N_{s,0}^{\perp_{R}}+N_{\mu\left(s\right),N-m}=N_{s,0}^{\perp_{R}}\oplus N_{\mu\left(s\right),N-m}$
and hence
\[
N_{s,0}^{\perp_{R}}\oplus N_{\mu\left(s\right),N-m}\subseteq N_{s,m}^{\perp_{R}}.
\]
Now, counting dimensions we get
\begin{align*}
\dim_{\Bbbk}\left(N_{s,0}^{\perp_{R}}\oplus N_{\mu\left(s\right),N-m}\right) & =\left(N\left|S\right|-N\right)+m=N\left|S\right|-N+m\\
 & =N\left|S\right|-\dim_{\Bbbk}N_{s,m}=\dim_{\Bbbk}N_{s,m}^{\perp_{R}}
\end{align*}
which implies the equality whence $N_{s,m}^{\perp_{R}}=N_{s,0}^{\perp_{R}}\oplus N_{\mu\left(s\right),N-m}$
as desired.
\end{proof}

\section{\label{sec:rat}Ideals and their orthogonals for Hopf algebras\label{sec:Hopf-algebra-codes}}

From now on, $H$\textbf{ will always be a finite-dimensional Hopf
algebra} with antipode $S$.

Consider $H^{\ast}$ as a right Hopf module with action $\leftharpoondown$
defined by $\left(f\leftharpoondown h\right)\left(l\right):=f\left(lS\left(h\right)\right)$.
Then, by \cite[Corollary 5.1.6 and Theorem 5.1.3]{Sweedler-Hopf},
we have that $S$ is invertible, the space $\int_{l}(H^{\ast})$ of
left integrals in $H^{\ast}$ is one-dimensional (recall that $\lambda\in H^{*}$
belongs to $\int_{l}(H^{\ast})$ if and only if $\sum h_{1}\lambda(h_{2})=1_{H}\lambda(h)$
for all $h\in H$) and the map
\[
\int_{l}(H^{\ast})\otimes H\rightarrow H^{\ast}:f\otimes h\mapsto\left(f\leftharpoondown h\right)
\]
is bijective. Since $\int_{l}(H^{\ast})$ is one-dimensional, we can
choose a non-zero integral $\lambda\in H^{\ast}$ and the map
\[
\phi:H\rightarrow H^{\ast}:h\mapsto\left(\lambda\leftharpoondown h\right)
\]
is invertible, see \cite[page 306]{DNR}. The map $\phi$ is right
$H$-linear:
\[
\phi\left(hh^{\prime}\right)=\lambda\leftharpoondown\left(hh^{\prime}\right)=\left(\lambda\leftharpoondown h\right)\leftharpoondown h^{\prime}=\phi\left(h\right)\leftharpoondown h^{\prime}.
\]
Note that the structure of right $H$-module on $H^{\ast}$ is not
the canonical one $\leftharpoonup$ given by $\left(f\leftharpoonup h\right)\left(l\right)=f\left(lh\right)$.

Through $\phi$ one has a bijective correspondence between right ideals
in $H$ and right $H$-submodules of $H^{\ast}$.
\begin{rem}
\label{rem:phimenuno}Set $t:=\phi^{-1}\left(\varepsilon\right)\in H$.
By \cite[Section 7.4]{DNR}, we have that $t$ is a right integral
in $H$ (i.e. $th=t\varepsilon\left(h\right)$ for every $h\in H$)
and $\lambda\left(t\right)=1$. Moreover
\[
\phi^{-1}\left(f\right)=\sum t_{1}f\left(t_{2}\right),\text{ for every \ensuremath{f\in H^{*}}}.
\]
Note that given any right integral $t'$ in $H$ such that $\lambda\left(t'\right)=1$
then $S\left(t'\right)$ is a left integral (\cite[Exercise 10.5.1, page 305]{Radford-Hopfbook})
and $\lambda S\left(t'\right)=\lambda\left(t'\right)$ (by the left-right
handed version of \cite[Exercise 10.5.3, page 311]{Radford-Hopfbook})
so that $\phi\left(t'\right)\left(y\right)=\lambda\left(yS\left(t'\right)\right)=\lambda\left(\varepsilon\left(y\right)S\left(t'\right)\right)=\varepsilon\left(y\right)\lambda\left(S\left(t'\right)\right)=\varepsilon\left(y\right)\lambda\left(t'\right)=\varepsilon\left(y\right)$
and hence $\phi\left(t'\right)=\varepsilon$. As a consequence $t'=\phi^{-1}\left(\varepsilon\right)=t.$
\end{rem}

We now are going to apply the results of Section \ref{sec:Bilinear-forms}
in the case $V$ is a Hopf algebra $H.$ In the setting of Section
\ref{sec:rat}, for every $x,y\in H$, we set
\begin{equation}
\left\langle x,y\right\rangle :=\phi\left(x\right)\left(y\right)=\left(\lambda\leftharpoondown x\right)\left(y\right)=\lambda\left[yS\left(x\right)\right].\label{eq:formHopf}
\end{equation}

Note that, since $\phi$ is an isomorphism and $H$ is finite-dimensional,
the bilinear form $\left\langle -,-\right\rangle $ is non-degenerate.

For $x,y,h\in H$, we have
\begin{equation}
\left\langle xh,y\right\rangle =\lambda\left[yS\left(xh\right)\right]=\lambda\left[yS\left(h\right)S\left(x\right)\right]=\left\langle x,yS\left(h\right)\right\rangle \label{eq:Balanced}
\end{equation}

and
\begin{equation}
\sum\left\langle x_{1},y\right\rangle x_{2}=\sum\lambda\left[yS\left(x_{1}\right)\right]x_{2}\overset{(*)}{=}\sum y_{1}\lambda\left[y_{2}S\left(x\right)\right]=\sum y_{1}\left\langle x,y_{2}\right\rangle \label{eq:Sposto}
\end{equation}
where in $(*)$ we used \cite[Lemma 5.1.4]{DNR}.\medskip{}

The Frobenius bilinear form $b:H\times H\to\Bbbk$ is defined by setting
$b\left(x,y\right):=\left(\phi\circ S^{-1}\right)\left(y\right)\left(x\right)=\lambda\left(xy\right)$
for every $x,y\in H$. This bilinear form is non-degenerate because
$\phi\circ S^{-1}$ is invertible. Consider the associated Nakayama
automorphism $\eta:H\to H$ (we know that it is an algebra map because
$H$ is an algebra and the form is associative). Note that $\eta$
is uniquely determined by the equality $b\left(x,y\right)=b\left(y,\eta\left(x\right)\right)$
i.e. $\lambda\left(xy\right)=\lambda\left(y\eta\left(x\right)\right)$
for every $x,y\in H$.
\begin{lem}
\label{lem:leftriang}In the setting of Proposition \ref{prop:action},
take $V$ the underlying algebra of $H$ with left and right regular
actions. Then, for every $h,x\in H$, we have
\begin{align*}
h\vartriangleright x=xS^{-1}\left(h\right)\qquad & \text{and}\qquad h\blacktriangleright x=xS\left(h\right),\\
x\vartriangleleft h=S^{-1}\left(\eta\left(h\right)\right)x\qquad & \text{and}\qquad x\blacktriangleleft h=\eta^{-1}\left(S\left(h\right)\right)x.
\end{align*}
Moreover $\mathrm{Ann}_{_{\bullet\negmedspace}H}\left(h\right)=\left(\eta^{-1}\left(S\left(h\right)\right)\right)^{\perpl}$
and $\mathrm{Ann}_{H_{\bullet}}\left(h\right)=\left(S(h)\right)^{\perpl}$
for every $h\in H$.
\end{lem}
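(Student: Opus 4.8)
The plan is to invoke Proposition \ref{prop:action} with $V$ the underlying vector space of $H$, equipped with the non-degenerate form \eqref{eq:formHopf}, and with $A=H$: its left regular action makes $V$ a left $H$-module (to which part 1 applies, yielding the right actions $\vartriangleleft,\blacktriangleleft$) and its right regular action makes $V$ a right $H$-module (to which part 2 applies, yielding the left actions $\vartriangleright,\blacktriangleright$). The same proposition already supplies the identities $\mathrm{Ann}_{_{\bullet\negmedspace}H}(h)=(H\blacktriangleleft h)^{\perpl}$ and $\mathrm{Ann}_{H_{\bullet}}(h)=(h\blacktriangleright H)^{\perpl}$, so the last sentence of the lemma will follow once the four module formulas are in hand. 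For each of the four, the recipe is identical: start from the defining identity of the action, massage the right-hand side into the form $\langle w,-\rangle$ or $\langle -,w\rangle$, and invoke non-degeneracy of $\langle-,-\rangle$ to identify the module element with the claimed $w$.

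For the two actions built from right multiplication, the only input needed is \eqref{eq:Balanced}. First I would write $\langle x,h\blacktriangleright y\rangle=\langle xh,y\rangle=\langle x,yS(h)\rangle$ for all $x$, whence $h\blacktriangleright y=yS(h)$; then $\langle h\vartriangleright x,y\rangle=\langle x,yh\rangle=\langle xS^{-1}(h),y\rangle$ for all $y$ — using \eqref{eq:Balanced} with $S^{-1}(h)$ in place of $h$ — whence $h\vartriangleright x=xS^{-1}(h)$.

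For the two actions built from left multiplication, \eqref{eq:Balanced} is of no direct use, so I would bring in the Frobenius form $b(x,y)=\lambda(xy)$ and its Nakayama automorphism $\eta$, characterised by $\lambda(ac)=\lambda(c\,\eta(a))$, and exploit $\langle x,y\rangle=\lambda(yS(x))$. For $\blacktriangleleft$: $\langle x,y\blacktriangleleft h\rangle=\langle hx,y\rangle=\lambda(yS(x)S(h))=\lambda(\eta^{-1}(S(h))\,yS(x))=\langle x,\eta^{-1}(S(h))\,y\rangle$, where the third equality is $\lambda(ac)=\lambda(c\eta(a))$ with $a=\eta^{-1}(S(h))$, $c=yS(x)$; non-degeneracy gives $y\blacktriangleleft h=\eta^{-1}(S(h))\,y$. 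For $\vartriangleleft$: $\langle x\vartriangleleft h,y\rangle=\langle x,hy\rangle=\lambda(hyS(x))=\lambda(yS(x)\eta(h))=\lambda(yS(S^{-1}(\eta(h))\,x))=\langle S^{-1}(\eta(h))\,x,y\rangle$, using $\lambda(ac)=\lambda(c\eta(a))$ with $a=h$, $c=yS(x)$, and $\eta(h)=S(S^{-1}(\eta(h)))$; non-degeneracy gives $x\vartriangleleft h=S^{-1}(\eta(h))\,x$.

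Finally I would read off the annihilators: since $y\blacktriangleleft h=\eta^{-1}(S(h))\,y$, one has $H\blacktriangleleft h=\eta^{-1}(S(h))H=(\eta^{-1}(S(h)))$, hence $\mathrm{Ann}_{_{\bullet\negmedspace}H}(h)=(\eta^{-1}(S(h)))^{\perpl}$; and since $h\blacktriangleright x=xS(h)$, one has $h\blacktriangleright H=HS(h)=(S(h))$, hence $\mathrm{Ann}_{H_{\bullet}}(h)=(S(h))^{\perpl}$. Everything here is a short formal manipulation; the only point that needs genuine care — and the place I expect one might slip — is the sidedness bookkeeping in the last two formulas, i.e.\ choosing the factor pulled out under $\lambda(ac)=\lambda(c\eta(a))$ so that, after undoing an application of $S$, the Nakayama twist ($\eta$ or $\eta^{-1}$) ends up on the intended side, while keeping $S$ and $S^{-1}$ straight throughout.
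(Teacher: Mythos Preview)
Your proposal is correct and follows essentially the same route as the paper: both derive the formulas for $\vartriangleright,\blacktriangleright$ directly from \eqref{eq:Balanced}, derive those for $\vartriangleleft,\blacktriangleleft$ by unfolding $\langle x,y\rangle=\lambda(yS(x))$ and applying the defining identity $\lambda(ac)=\lambda(c\eta(a))$ of the Nakayama automorphism, and then read off the annihilator identities from Proposition~\ref{prop:action}. The only cosmetic difference is that you make the rewriting $\eta(h)=S(S^{-1}(\eta(h)))$ explicit where the paper leaves it implicit.
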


\begin{proof}
By definition of $\vartriangleright,\blacktriangleright$ and \eqref{eq:Balanced},
we have
\[
\left\langle h\vartriangleright x,y\right\rangle =\left\langle x,yh\right\rangle =\left\langle xS^{-1}\left(h\right),y\right\rangle \quad\text{and}\quad\left\langle y,h\blacktriangleright x\right\rangle =\left\langle yh,x\right\rangle =\left\langle y,xS\left(h\right)\right\rangle .
\]
Since the form is non-degenerate, we get $h\vartriangleright x=xS^{-1}\left(h\right)$
and $h\blacktriangleright x=xS\left(h\right)$.

For every $x,y,h\in H$,

\begin{align*}
\left\langle x\vartriangleleft h,y\right\rangle  & =\left\langle x,hy\right\rangle =\lambda\left(hyS\left(x\right)\right)=\lambda\left(yS\left(x\right)\eta\left(h\right)\right)=\left\langle S^{-1}\left(\eta\left(h\right)\right)x,y\right\rangle \\
\left\langle y,x\blacktriangleleft h\right\rangle  & =\left\langle hy,x\right\rangle =\lambda\left(xS\left(y\right)S\left(h\right)\right)=\lambda\left(\eta^{-1}\left(S\left(h\right)\right)xS\left(y\right)\right)=\left\langle y,\eta^{-1}\left(S\left(h\right)\right)x\right\rangle
\end{align*}
so that we obtain $x\vartriangleleft h=S^{-1}\left(\eta\left(h\right)\right)x$
and $x\blacktriangleleft h=\eta^{-1}\left(S\left(h\right)\right)x$.

By Proposition \ref{prop:action}, we have

\begin{align*}
\left(\eta^{-1}\left(S\left(h\right)\right)\right)^{\perpl} & =\left(H\blacktriangleleft h\right)^{\perpl}=\mathrm{Ann}_{_{\bullet\negmedspace}H}\left(h\right)\\
\left(S(h)\right)^{\perpl} & =\left(h\blacktriangleright H\right)^{\perpl}=\mathrm{Ann}_{H_{\bullet}}\left(h\right).
\end{align*}
\end{proof}
\begin{lem}
If $I$ is a right ideal of $H$, then $I^{\perp_{R}}$ and $I^{\perp_{L}}$
are right ideals.
\end{lem}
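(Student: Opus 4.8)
The plan is to treat the two orthogonals separately; since they are orthogonals of a subspace they are automatically linear subspaces, so in each case it remains only to check stability under right multiplication by $H$, and this reduces to a one-line computation once the appropriate ``balancedness'' identity is in hand.

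For the left orthogonal I would take $x\in I^{\perp_{L}}$, an element $h\in H$, and an arbitrary $y\in I$, and invoke \eqref{eq:Balanced}, namely $\left\langle xh,y\right\rangle =\left\langle x,yS\left(h\right)\right\rangle $. Since $I$ is a right ideal we have $yS\left(h\right)\in I$, hence $\left\langle xh,y\right\rangle =0$; as $y\in I$ was arbitrary this gives $xh\in I^{\perp_{L}}$, so $I^{\perp_{L}}$ is a right ideal.

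For the right orthogonal the key is the companion of \eqref{eq:Balanced} governing right multiplication in the \emph{second} slot. Using the explicit expression \eqref{eq:formHopf}, i.e.\ $\left\langle x,y\right\rangle =\lambda\left[yS\left(x\right)\right]$, together with the fact that $S$ is invertible (so that $S^{-1}$ is again an algebra anti-homomorphism), I would compute, for all $x,y,h\in H$,
\[
\left\langle y,xh\right\rangle =\lambda\left[xhS\left(y\right)\right]=\lambda\left[x\,S\left(yS^{-1}\left(h\right)\right)\right]=\left\langle yS^{-1}\left(h\right),x\right\rangle .
\]
Then for $x\in I^{\perp_{R}}$, $h\in H$ and $y\in I$, the element $yS^{-1}\left(h\right)$ lies in $I$, so this identity yields $\left\langle y,xh\right\rangle =\left\langle yS^{-1}\left(h\right),x\right\rangle =0$; hence $xh\in I^{\perp_{R}}$ and $I^{\perp_{R}}$ is a right ideal.

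I do not expect a genuine obstacle here: the only step requiring care is recognizing the identity $\left\langle y,xh\right\rangle =\left\langle yS^{-1}\left(h\right),x\right\rangle $, that is, that right multiplication by $h$ in the second argument of the pairing corresponds, after flipping the pairing, to right multiplication by $S^{-1}\left(h\right)$ in the first. One could alternatively try to obtain the $\perp_{R}$ statement from the $\perp_{L}$ one through the relation $I^{\perp_{R}}=\gamma\left(I^{\perp_{L}}\right)$ of Lemma \ref{lem:gamDuals}, but by Lemma \ref{lem:leftriang} and Proposition \ref{prop:action} the Nakayama isomorphism $\gamma$ is right $H$-linear precisely when $S^{2}=\mathrm{Id}$, so this route fails in general and the direct computation is the appropriate one.
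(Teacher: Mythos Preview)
Your proof is correct and follows essentially the same approach as the paper's: the treatment of $I^{\perp_{L}}$ is identical, and for $I^{\perp_{R}}$ the paper writes $h=S(h')$ (using surjectivity of $S$) and applies \eqref{eq:Balanced} to get $\left\langle y,xh\right\rangle =\left\langle yh',x\right\rangle $, which is exactly your identity with $h'=S^{-1}(h)$. Your closing remark about the $\gamma$-route is accurate but not needed for the argument.
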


\begin{proof}
Given $x\in I^{\perp_{L}}$, $y\in I$ and $h\in H$, we get $\left\langle xh,y\right\rangle =\left\langle x,yS\left(h\right)\right\rangle =0$,
as $yS\left(h\right)\in I$, and hence $xh\in I^{\perp_{L}}$, so
$I^{\perp_{L}}$ is a right ideal.

Let now $x\in I^{\perp_{R}}$, $y\in I$ and $h\in H$. Since $H$
is finite-dimensional, $S$ is surjective so there exists an $h^{\prime}\in H$
such that $h=S\left(h^{\prime}\right)$; we get $\left\langle y,xh\right\rangle =\left\langle y,xS\left(h^{\prime}\right)\right\rangle =\left\langle yh^{\prime},x\right\rangle =0$
since $yh^{\prime}\in I$ and hence $xh\in I^{\perp_{R}}$, so $I^{\perp_{R}}$
is a right ideal.
\end{proof}
\begin{rem}
Let $H$ be a Hopf algebra with basis $\mathcal{B}=\left\{ v_{1},\dots,v_{n}\right\} $.
Let $\gamma:H\to H$ be a $\Bbbk$-linear isomorphism such that \eqref{eq:gamma}
holds true. Then $\lambda S\left(x\right)=\left\langle x,1\right\rangle =\left\langle 1,\gamma\left(x\right)\right\rangle =\lambda\gamma\left(x\right)$
so that
\begin{equation}
\lambda S=\lambda\gamma.\label{eq:lambdagamma}
\end{equation}
\end{rem}

\begin{lem}
\label{lem:Involutory} Let $H$ be a Hopf algebra with basis $\mathcal{B}=\left\{ v_{1},\dots,v_{n}\right\} $.
\begin{enumerate}
\item Assume there is a right monomial trasformation $\gamma$ such that
\eqref{eq:gamma} holds true as in Remark \ref{rem:gamma}. Then $\mathrm{\gamma}=\gamma\left(1\right)S^{2}$.
\item Assume $H$ is cosemisimple. If, for every $i\in\left\{ 1,\dots,n\right\} $,
$S^{2}\left(v_{i}\right)=c_{i}v_{i}$, then for every $i,j\in\left\{ 1,\dots,n\right\} $,
$\left\langle v_{i},v_{j}\right\rangle =c_{i}\left\langle v_{j},v_{i}\right\rangle $.
Moreover the map $\gamma:=T\left(\mathrm{Id},c_{i}\right)$ of Remark
\ref{rem:gamma} is exactly $S^{2}$.
\end{enumerate}
\end{lem}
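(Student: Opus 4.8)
The plan is to work throughout from the explicit formula $\langle x,y\rangle=\lambda\bigl(yS(x)\bigr)$ of \eqref{eq:formHopf}, from the identity $\langle xh,y\rangle=\langle x,yS(h)\rangle$ of \eqref{eq:Balanced}, and from the uniqueness of the linear map satisfying \eqref{eq:gamma}.

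For part (1) I would argue purely formally. For all $x,h,y\in H$, chaining \eqref{eq:gamma} and \eqref{eq:Balanced} gives $\langle y,\gamma(xh)\rangle=\langle xh,y\rangle=\langle x,yS(h)\rangle=\langle yS(h),\gamma(x)\rangle=\langle y,\gamma(x)S^{2}(h)\rangle$, where the last equality is \eqref{eq:Balanced} with ``$xh$'' read as $y\cdot S(h)$ (so that $S(S(h))=S^{2}(h)$ appears). Since $\langle-,-\rangle$ is non-degenerate, this forces $\gamma(xh)=\gamma(x)S^{2}(h)$ for all $x,h\in H$; taking $x=1$ yields $\gamma(h)=\gamma(1)S^{2}(h)$, i.e. $\gamma=\gamma(1)S^{2}$. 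I would also point out that this identity in fact holds for \emph{any} finite-dimensional Hopf algebra equipped with the form \eqref{eq:formHopf}: the monomial hypothesis only guarantees that such a $\gamma$ (necessarily the Nakayama isomorphism) is at hand and, a posteriori, that $\gamma(1)S^{2}$ is a right monomial transformation.

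For part (2), the key input coming from cosemisimplicity is $\lambda\circ S=\lambda$. I would establish this by first checking that $\lambda\circ S$ is a right integral in $H^{*}$ --- apply $S$ to $\sum(\lambda S)(h_{1})h_{2}$ and use that $S$ is an anti-coalgebra map together with the left-integral property of $\lambda$ --- and then observing that cosemisimplicity of $H$ makes $H^{*}$ semisimple, hence unimodular, so that $\lambda\circ S$ is simultaneously a left integral; one-dimensionality of $\int_{l}(H^{*})$ then gives $\lambda\circ S=c\lambda$, and evaluating at $1_{H}$, where $\lambda(1_{H})\neq0$ by cosemisimplicity, forces $c=1$. Granting $\lambda S=\lambda$, a one-line computation using that $S$ is an anti-algebra map gives, for all $x,y\in H$, $\langle y,S^{2}(x)\rangle=\lambda\bigl(S^{2}(x)S(y)\bigr)=\lambda\bigl(S(yS(x))\bigr)=\lambda\bigl(yS(x)\bigr)=\langle x,y\rangle$, so that \eqref{eq:gamma} holds with $S^{2}$ in place of $\gamma$; by uniqueness the Nakayama isomorphism equals $S^{2}$. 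When in addition $S^{2}(v_{i})=c_{i}v_{i}$ for every $i$, the map $S^{2}$ coincides with $T(\mathrm{Id},c_{i})$ of Remark \ref{rem:gamma} --- with each $c_{i}\neq0$ since $S^{2}$ is invertible --- so that remark translates the last display into the asserted relation $\langle v_{i},v_{j}\rangle=c_{i}\langle v_{j},v_{i}\rangle$, and $\gamma=T(\mathrm{Id},c_{i})=S^{2}$.

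The one step requiring genuine care is the identity $\lambda\circ S=\lambda$: here one must keep careful track of left versus right integrals in $H^{*}$ and invoke the correct coalgebra-antihomomorphism compatibility of $S$. Everything else is routine bookkeeping with \eqref{eq:formHopf}, \eqref{eq:Balanced}, \eqref{eq:gamma} and the non-degeneracy of the relevant bilinear forms.
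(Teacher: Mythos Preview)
Your proof is correct and follows essentially the same route as the paper: both parts rest on chaining \eqref{eq:gamma} with \eqref{eq:Balanced} and invoking non-degeneracy, and part (2) hinges on the same key identity $\lambda S=\lambda$. The only cosmetic differences are that the paper obtains $\lambda S=\lambda$ more directly from the total integral $\Lambda$ (which satisfies $\Lambda S=\Lambda$) rather than via unimodularity of $H^{*}$, and it computes $c_{i}\langle v_{j},v_{i}\rangle=\langle v_{i},v_{j}\rangle$ directly rather than first identifying $\gamma=S^{2}$ and then reading off the relation.
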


\begin{proof}
$\left(1\right)$. We compute, for every $x,y\in H$
\begin{align*}
\left\langle y,\gamma\left(x\right)\right\rangle  & \overset{\eqref{eq:gamma}}{=}\left\langle x,y\right\rangle \overset{\eqref{eq:Balanced}}{=}\left\langle 1,yS\left(x\right)\right\rangle \overset{\eqref{eq:gamma}}{=}\left\langle yS\left(x\right),\gamma\left(1\right)\right\rangle \overset{\eqref{eq:Balanced}}{=}\left\langle y,\gamma\left(1\right)S^{2}\left(x\right)\right\rangle .
\end{align*}
Since $\left\langle -,-\right\rangle $ is a non-degenerate we then
get $\gamma\left(x\right)=\gamma\left(1\right)S^{2}\left(x\right)$
for every $x\in H$.

$\left(2\right).$ Note that, in our setting, $\left\langle x,y\right\rangle =\lambda\left[yS\left(x\right)\right]$
for some non-zero integral $\lambda$. Since $H$ is cosemisimple,
it has a total integral $\Lambda$. Since $\int_{l}(H^{\ast})$ is
one-dimensional, there is $k\in\Bbbk$ such that $\lambda=k\Lambda.$
Thus, from $\Lambda S=\Lambda$, we obtain $\lambda S=k\Lambda S=k\Lambda=\lambda.$
Since $S$ is invertible and $S^{2}\left(v_{i}\right)\in\Bbbk v_{i}$,
there is a non-zero element $c_{i}\in\Bbbk$ such that $S^{2}\left(v_{i}\right)=c_{i}v_{i}$.
Hence, for every $x,y\in H$, we have
\begin{align*}
c_{i}\left\langle v_{j},v_{i}\right\rangle  & =c_{i}\lambda\left[v_{i}S\left(v_{j}\right)\right]=\lambda\left[S^{2}\left(v_{i}\right)S\left(v_{j}\right)\right]\\
 & =\lambda S\left[v_{j}S\left(v_{i}\right)\right]=\lambda\left[v_{j}S\left(v_{i}\right)\right]=\left\langle v_{i},v_{j}\right\rangle .
\end{align*}
By definition $\gamma\left(v_{i}\right)=c_{i}v_{i}=S^{2}\left(v_{i}\right)$.
\end{proof}
As a consequence we recover the following well-known result.
\begin{cor}
Let $H$ be a finite-dimensional Hopf algebra.
\begin{enumerate}
\item If $\left\langle -,-\right\rangle :H\otimes H\rightarrow\Bbbk$ is
symmetric, then $H$ is involutory (i.e. $S^{2}=\mathrm{Id}_{H}$).
\item Assume $H$ is cosemisimple. If $H$ is involutory, then $\left\langle -,-\right\rangle :H\otimes H\rightarrow\Bbbk$
is symmetric.
\end{enumerate}
\end{cor}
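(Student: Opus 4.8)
The plan is to deduce both statements as immediate consequences of Lemma \ref{lem:Involutory}. The corollary is really two separate implications, and each one matches one part of the lemma, so the work is mostly bookkeeping about what ``$\left\langle -,-\right\rangle $ symmetric'' means in terms of a Nakayama-type map.

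For part (1), suppose $\left\langle -,-\right\rangle $ is symmetric, i.e. $\left\langle x,y\right\rangle =\left\langle y,x\right\rangle $ for all $x,y\in H$. Then the identity map $\gamma=\mathrm{Id}_{H}$ trivially satisfies \eqref{eq:gamma}: indeed $\left\langle x,y\right\rangle =\left\langle y,x\right\rangle =\left\langle y,\mathrm{Id}(x)\right\rangle $. Now $\mathrm{Id}_{H}$ is a right monomial transformation with respect to any basis (take $\tau=\mathrm{Id}$ and all $c_i=1$ in Remark \ref{rem:gamma}), so Lemma \ref{lem:Involutory}(1) applies and gives $\mathrm{Id}_{H}=\gamma=\gamma(1)S^{2}=1_H\,S^{2}=S^{2}$. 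Hence $H$ is involutory. (One can pick any basis $\mathcal B$ here; symmetry of the form makes the hypothesis of Lemma \ref{lem:Involutory}(1) hold for that basis.)

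For part (2), assume $H$ is cosemisimple and involutory, so $S^{2}=\mathrm{Id}_{H}$. Choose any basis $\mathcal{B}=\left\{ v_{1},\dots,v_{n}\right\}$ of $H$; then $S^{2}(v_i)=v_i$, so the hypothesis of Lemma \ref{lem:Involutory}(2) holds with every $c_i=1$. The conclusion of that lemma then reads $\left\langle v_{i},v_{j}\right\rangle =c_i\left\langle v_{j},v_{i}\right\rangle =\left\langle v_{j},v_{i}\right\rangle$ for all $i,j$. Since this holds on a basis and $\left\langle -,-\right\rangle $ is bilinear, it extends to $\left\langle x,y\right\rangle =\left\langle y,x\right\rangle $ for all $x,y\in H$, i.e. the form is symmetric.

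There is no real obstacle here; the only point requiring a moment's care is checking that the hypotheses of the two parts of Lemma \ref{lem:Involutory} are genuinely satisfied in each direction — in particular that ``symmetric form'' forces $\gamma=\mathrm{Id}$ to be an admissible (monomial) choice in part (1), and that ``involutory'' makes all the eigenvalue constants $c_i$ equal to $1$ in part (2). Both are immediate, so the proof is short.
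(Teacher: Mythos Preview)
Your proof is correct and follows exactly the approach the paper intends: the paper presents this corollary without proof, simply as an immediate consequence of Lemma~\ref{lem:Involutory}, and your argument spells out precisely how each part of the lemma yields the corresponding implication.
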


\begin{rem}
Let $H$ be a finite-dimensional involutory Hopf algebra (e.g. $H$
is either commutative or cocommutative, see \cite[Corollary 4.2.8]{DNR}).
Then $H$ is semisimple and $H$ cosemisimple if and only if $\mathrm{char}\Bbbk\nmid\mathrm{dim}_{\Bbbk}H$,
see \cite[Corollary 2.6]{LR-FinDim}.
\end{rem}

\textcolor{red}{}%

\begin{lem}
\label{lem:AnnI}Let $H$ be a finite-dimensional Hopf algebra with
basis $\mathcal{B}=\left\{ v_{1},\dots,v_{n}\right\} $ and let $I$
be a right ideal of $H$. Then $S^{-1}\left(\mathrm{Ann}_{H}\left(I\right)\right)\subseteq I^{\perp_{L}}$,
where $S^{-1}$ denotes the composition inverse of the antipode.

Assume there is a right monomial transformation $\gamma$ such that
\eqref{eq:gamma} holds true. If $I^{\perp_{L}}$ is a two-sided ideal,
then equality holds.
\end{lem}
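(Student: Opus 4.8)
The plan is to handle the two assertions separately. For the inclusion $S^{-1}\left(\mathrm{Ann}_{H}\left(I\right)\right)\subseteq I^{\perp_{L}}$, fix $a\in\mathrm{Ann}_{H}\left(I\right)$, so that $ya=0$ for every $y\in I$ (here $\mathrm{Ann}_{H}\left(I\right)$ is the right annihilator, i.e. $\mathrm{Ann}_{H_{\bullet}}\left(I\right):=\left\{a\in H\mid Ia=0\right\}$, which makes sense as $I$ is a right ideal). I want to show $S^{-1}\left(a\right)\in I^{\perp_{L}}$, i.e. $\left\langle S^{-1}\left(a\right),y\right\rangle=0$ for all $y\in I$. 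Using \eqref{eq:formHopf}, $\left\langle S^{-1}\left(a\right),y\right\rangle=\lambda\left[yS\left(S^{-1}\left(a\right)\right)\right]=\lambda\left[ya\right]=\lambda\left[0\right]=0$. This is immediate; the only subtlety is bookkeeping about which annihilator and which side is meant, and noting that $S$ is invertible (as recalled at the start of Section \ref{sec:rat}). Alternatively this follows from Lemma \ref{lem:leftriang}: $\mathrm{Ann}_{H_{\bullet}}\left(h\right)=\left(S\left(h\right)\right)^{\perp_{L}}$, but the direct computation above is cleaner for a general right ideal $I$.

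For the reverse inclusion under the extra hypotheses, assume $\gamma=T\left(\tau,c_{i}\right)$ satisfies \eqref{eq:gamma} and that $I^{\perp_{L}}$ is a two-sided ideal. Take $x\in I^{\perp_{L}}$; I want $S\left(x\right)\in\mathrm{Ann}_{H}\left(I\right)$, i.e. $y\,S\left(x\right)=0$ for every $y\in I$. The idea is to use that $I^{\perp_{L}}$ being a \emph{left} ideal forces $S\left(x\right)$ to annihilate $I$ on the appropriate side. Concretely, for $y\in I$ and arbitrary $z\in H$, consider $\left\langle z\,x,\,y\right\rangle$: by \eqref{eq:Balanced} this equals $\left\langle x,\,y\,S\left(z\right)\right\rangle$, which is $0$ because $y\,S\left(z\right)\in I$ (as $I$ is a right ideal). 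Hence $z\,x\in I^{\perp_{L}}$ for all $z$ — but wait, that is automatic from $I^{\perp_{L}}$ being a left ideal, so I must extract more. The right move is to test against $\gamma$: since $I^{\perp_{L}}$ is a right ideal (always true, by the Lemma preceding Lemma \ref{lem:AnnI}) and also a left ideal by hypothesis, $H\,x\,H\subseteq I^{\perp_{L}}$. Now use \eqref{eq:gamma} together with \eqref{eq:Balanced}: for $y\in I$ and $z\in H$,
\[
\left\langle y\,S\left(x\right),\,z\right\rangle=\left\langle y,\,z\,S\left(S\left(x\right)\right)\right\rangle=\left\langle y,\,z\,S^{2}\left(x\right)\right\rangle.
\]
By Lemma \ref{lem:Involutory}(1), $S^{2}\left(x\right)=\gamma\left(1\right)^{-1}\gamma\left(x\right)$, and since $\gamma\left(x\right)\in\gamma\left(I^{\perp_{L}}\right)=I^{\perp_{R}}$ by Lemma \ref{lem:gamDuals}, while $\gamma\left(1\right)^{-1}$ is a unit and $I^{\perp_{R}}$ is a right ideal, we still only land in $I^{\perp_{R}}$; I then use $\left\langle y,\,z\,w\right\rangle$ with $w\in I^{\perp_{R}}$ — but $\left\langle y,w\right\rangle=0$ for $y\in I$, $w\in I^{\perp_{R}}$ only tells us about $w$ directly, not $zw$. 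So the cleaner route is: $z\,S^{2}\left(x\right)\in H\cdot I^{\perp_{R}}=I^{\perp_{R}}$ provided $I^{\perp_{R}}$ is a \emph{left} ideal, which holds because $I^{\perp_{R}}=\gamma\left(I^{\perp_{L}}\right)$ and $\gamma=\gamma\left(1\right)S^{2}$ is an algebra anti-automorphism up to the unit $\gamma\left(1\right)$ — more simply, $S^{2}$ is an algebra automorphism, $I^{\perp_{L}}$ two-sided implies $S^{2}\left(I^{\perp_{L}}\right)$ two-sided, and $\gamma\left(1\right)S^{2}\left(I^{\perp_{L}}\right)=I^{\perp_{R}}$ is then two-sided as well. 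Therefore $z\,S^{2}\left(x\right)\in I^{\perp_{R}}$, so $\left\langle y,\,z\,S^{2}\left(x\right)\right\rangle=0$, giving $\left\langle y\,S\left(x\right),\,z\right\rangle=0$ for all $z\in H$; by non-degeneracy $y\,S\left(x\right)=0$, i.e. $S\left(x\right)\in\mathrm{Ann}_{H}\left(I\right)$, so $x\in S^{-1}\left(\mathrm{Ann}_{H}\left(I\right)\right)$.

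The main obstacle is precisely keeping the left/right bookkeeping straight: one needs $I^{\perp_{L}}$ being two-sided to propagate (via $\gamma=\gamma\left(1\right)S^{2}$ of Lemma \ref{lem:Involutory}(1), whose hypothesis $\gamma=T\left(\tau,c_{i}\right)$ is exactly what we assumed) to $I^{\perp_{R}}=\gamma\left(I^{\perp_{L}}\right)$ being two-sided, and then to feed $z\,S^{2}\left(x\right)\in I^{\perp_{R}}$ into the pairing via \eqref{eq:Balanced} applied twice. Once that is set up, both inclusions are one-line computations with \eqref{eq:formHopf} and \eqref{eq:Balanced}, plus the standing fact that $S$ is invertible. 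I would also remark that equality can fail without the two-sided hypothesis, paralleling the remark after Lemma \ref{lem:orthog}, though that is not needed for the statement.
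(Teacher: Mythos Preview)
Your argument for the first inclusion is correct and essentially identical to the paper's: both compute $\left\langle S^{-1}(a),y\right\rangle=\lambda\left[y\,a\right]=0$ directly from \eqref{eq:formHopf} (the paper phrases this via \eqref{eq:Balanced} as $\left\langle S^{-1}(a),y\right\rangle=\left\langle 1,ya\right\rangle=0$, which is the same computation).

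For the reverse inclusion your route is correct but noticeably more elaborate than the paper's. You pass through $I^{\perp_{R}}$: from $\left\langle yS(x),z\right\rangle=\left\langle y,\,zS^{2}(x)\right\rangle$ you invoke Lemma~\ref{lem:Involutory}(1) to write $\gamma=\gamma(1)S^{2}$, then Lemma~\ref{lem:gamDuals} to get $I^{\perp_{R}}=\gamma(I^{\perp_{L}})$, and finally argue that $I^{\perp_{R}}$ is two-sided (using that $S^{2}$ is an algebra automorphism and that $\gamma(1)$ is a unit, so $\gamma(1)\cdot J=J$ for any two-sided ideal $J$). This works, but the paper's proof is a one-liner that stays entirely on the $I^{\perp_{L}}$ side: for $x\in I^{\perp_{L}}$, $y\in I$, $z\in H$,
\[
\left\langle yS(x),z\right\rangle \overset{\eqref{eq:gamma}}{=}\left\langle \gamma^{-1}(z),\,yS(x)\right\rangle \overset{\eqref{eq:Balanced}}{=}\left\langle \gamma^{-1}(z)\,x,\,y\right\rangle=0,
\]
the last equality because $\gamma^{-1}(z)\,x\in I^{\perp_{L}}$ (left ideal hypothesis) and $y\in I$. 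So the paper uses only \eqref{eq:gamma} once and \eqref{eq:Balanced} once, and in fact never needs the specific form $\gamma=\gamma(1)S^{2}$; it would go through with any Nakayama isomorphism. Your approach buys nothing extra here and leans on more machinery (Lemmas~\ref{lem:Involutory} and~\ref{lem:gamDuals}), though it is a legitimate alternative.
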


\begin{proof}
Let $x\in\mathrm{Ann}_{H}\left(I\right):=\left\{ h\in H\mid yh=0,\forall y\in I\right\} $.
Then, for every $y\in I$, we have $\left\langle S^{-1}\left(x\right),y\right\rangle \overset{\eqref{eq:Balanced}}{=}\left\langle 1,yx\right\rangle =0$
so that $S^{-1}\left(x\right)\in I^{\perp_{L}}$.

Let us prove the last part of the statement. Let $x\in I^{\perp_{L}}$.
Then for every $z\in H$, we have
\[
\left\langle yS\left(x\right),z\right\rangle \overset{\eqref{eq:gamma}}{=}\left\langle \gamma^{-1}\left(z\right),yS\left(x\right)\right\rangle \overset{\eqref{eq:Balanced}}{=}\left\langle \gamma^{-1}\left(z\right)x,y\right\rangle \overset{\gamma^{-1}\left(z\right)x\in I^{\perp_{L}}}{=}0.
\]
Since $\left\langle -,-\right\rangle $ is a non-degenerate, we get
that $yS\left(x\right)=0$ and hence $S\left(x\right)\in\mathrm{Ann}_{H}\left(I\right)$.
Thus $S\left(I^{\perp_{L}}\right)\subseteq\mathrm{Ann}_{H}\left(I\right)$.
Applying $S^{-1}$ on both sides we obtain $I^{\perp_{L}}\subseteq S^{-1}\left(\mathrm{Ann}_{H}\left(I\right)\right)$.
\end{proof}

\specialsection{Examples and applications\label{sec:Examples-and-applications}}

In the present section, for every $n\geq2$ and $a,b\in\mathbb{Z},$
we use the notation
\[
\delta_{a,b}^{\equiv_{n}}:=\left\{ \begin{array}{cc}
1, & \text{if }a\equiv_{n}b\\
0, & \text{otherwise}
\end{array}\right.
\]

\subsection{Cyclic group algebra \label{subsec:Cyclic-group-algebra}}

The aim of this section is to show how our treatment specializes to
classical cyclic codes in case of the cyclic group algebra $H:=\Bbbk\left[X\right]/\left(X^{n}-1\right)=\Bbbk\left\langle x\mid x^{n}=1\right\rangle $
where $x:=X+\left(X^{n}-1\right)$. \footnote{Note that here we use the notation $x$ for the generator because
it is standard in Code Theory. Later on, dealing with Taft algebras,
we will use the notation $g$ for the same element reminding it is
a group-like element. }

Consider $\lambda\in H^{\ast}$ defined on generators by $\lambda\left(x^{i}\right):=\delta_{i,0}$
for $0\leq i\leq n-1$, see \cite[Example 5.2.9-2)]{DNR}. Now $t=\sum_{i=0}^{n-1}x^{i}$
is both a left and right integral (see \cite[Examples 1)]{Sweedler-Hopf}),
and $\lambda\left(t\right)=1$ so that, by Remark \ref{rem:phimenuno},
\[
\phi^{-1}\left(f\right)=\sum t_{1}f\left(t_{2}\right)=\sum_{i=0}^{n-1}x^{i}f\left(x^{i}\right),\text{ for every \ensuremath{f\in H^{*}}}.
\]
In this specific case we can express $\phi$ explicitly as follows:
\[
\phi:H\rightarrow H^{\ast}:c_{0}+c_{1}x+\cdots+c_{n-1}x^{n-1}\mapsto\sum c_{i}\phi\left(x^{i}\right)
\]
where $\phi\left(x^{i}\right)\in H^{*}$ is defined by $\phi\left(x^{i}\right)(x^{j})=\lambda\left(x^{j-i}\right)=\delta_{i,j},\forall j\in\left\{ 0,\ldots,n-1\right\} $
so that $\phi\left(x^{0}\right),\ldots,\phi\left(x^{n-1}\right)$
is the dual basis of $1,\ldots,x^{n-1}$.

Consider the isomorphism $\alpha\colon H^{*}\rightarrow\Bbbk^{n}:f\mapsto\left(f_{0},f_{1},\cdots,f_{n-1}\right)$
where $f_{i}:=f\left(x^{i}\right)$. It permits to regard the elements
of $H^{\ast}$ as code words, although it strictly depends on the
chosen basis for $H.$ In view of \cite[2.2, page 70]{Abe-Hopf},
$\alpha$ is an algebra map, where $H^{\ast}$ carries the convolution
product. The right $H$-module structure of $H^{\ast}$ given by $\leftharpoondown$
induces, via $\alpha$, a right $H$-module structure on $\Bbbk^{n}$.
We want to give this structure explicitly. To this aim, for every
$f\in H^{*}$ and $0\leq i\leq n-1$, let $f_{i}:=f\left(x^{i}\right)$.
In other words $\alpha\left(f\right)=\left(f_{0},f_{1},\cdots,f_{n-1}\right)$.
We compute
\[
\left(f\leftharpoondown x\right)_{i}=\left(f\leftharpoondown x\right)\left(x^{i}\right)=f\left(x^{i-1}\right)=\begin{cases}
f_{n-1} & \text{if }i=0,\\
f_{i-1} & \text{if }1\leq i\leq n-1
\end{cases}
\]
so that $\alpha\left(f\leftharpoondown x\right)=\left(f_{n-1},f_{0},f_{1},\cdots,f_{n-2}\right).$
Therefore the unique right $H$-module structure on $\Bbbk^{n}$ which
makes $\alpha$ a morphism of right $H$-modules is given by
\begin{equation}
\left(f_{0},f_{1},\cdots,f_{n-1}\right)\leftharpoondown x:=\left(f_{n-1},f_{0},f_{1},\cdots,f_{n-2}\right).\label{eq:actioncyclic}
\end{equation}
Thus we recover the right $H$-linear bijection \eqref{eq: KCn} as
\[
\alpha\circ\phi:\Bbbk\left\langle x\right\rangle \rightarrow\Bbbk^{n}:c_{0}+c_{1}x+\cdots+c_{n-1}x^{n-1}\mapsto\left(c_{0},c_{1},\cdots,c_{n-1}\right).
\]
This map gives a bijective correspondence between right ideals of
$H$ and right $H$-submodules of $\Bbbk^{n}$, with respect to the
action $\leftharpoondown$ . By formula \eqref{eq:actioncyclic},
these submodules are exactly the cyclic codes of length $n$.

Note also that
\begin{equation}
\left\langle x^{i},x^{j}\right\rangle =\lambda\left[x^{j}S\left(x^{i}\right)\right]=\lambda\left(x^{j-i}\right)=\delta_{i,j}\label{eq:scalar}
\end{equation}
so that $\left\langle -,-\right\rangle :H\otimes H\rightarrow\Bbbk$
is symmetric, $\left\{ 1,x,x^{2},\ldots,x^{n-1}\right\} $ is an orthonormal
basis and through \eqref{eq: KCn}, it corresponds to the scalar product
on $\Bbbk^{n}$. Since the bilinear form is symmetric the left and
the right orthogonal coincide.

Let $q\in\Bbbk$ be a primitive $n$-th root of unity.

Note that $\Bbbk\left\langle x\right\rangle $ identifies with the
group algebra $\Bbbk G$ over the cyclic group $G:=\left\langle x\right\rangle $
with $x$ of order $n$, as above. The existence of a primitive $n$-th
root of unity implies $\mathrm{char}\left(\Bbbk\right)\nmid n$ so
that, for every $t\in\mathbb{Z},$ we can consider
\[
e_{t}:=\frac{1}{n}\sum_{i=0}^{n-1}q^{ti}x^{i}.
\]
 It is well-known that the $e_{t}$'s form a complete set of orthogonal
idempotents in $\Bbbk G$. This fact is the main tool used in \cite{CGL}.

As a consequence $\Bbbk\left\langle x\right\rangle $ can be regarded
as the algebra $\Bbbk\left(\omega,N\right)$ where $\omega=\mathrm{Id}$
and $N=1$ with basis $\left\{ e_{t}\mid0\leq t\leq n-1\right\} $.

For $0\leq s,t\leq n-1$, we have
\begin{align*}
\left\langle e_{s},e_{t}\right\rangle  & =\left\langle \frac{1}{n}\sum_{i=0}^{n-1}q^{si}x^{i},\frac{1}{n}\sum_{j=0}^{n-1}q^{tj}x^{j}\right\rangle \overset{\eqref{eq:scalar}}{=}\frac{1}{n^{2}}\sum_{i=0}^{n-1}q^{\left(s+t\right)i}=\frac{1}{n}\delta_{s+t,0}^{\equiv_{n}}.
\end{align*}
In particular the form $\left\langle -,-\right\rangle $ is as in
\eqref{eq:formcan}, where $\mu\left(t\right):=\left[-t\right]_{n},$
$\nu\left(0\right):=0$ and $d_{\left(s,0\right)}:=\frac{1}{n}$.
Here $\left[t\right]_{n}$ denotes the remainder modulo $n$ of $t$.
As a consequence $\left\langle -,-\right\rangle $ is monomial with
respect to $\mathcal{B}=\left\{ e_{s}\mid0\leq s\leq n-1\right\} $.
Since the form is symmetric, we get that the Nakayama isomorphism
$\gamma$ is the identity. Note also that, since $\Bbbk\left\langle x\right\rangle $
is commutative, also the Nakayama automorphism $\eta$ is the identity.

By Lemma \ref{lem:leftriang} and since $S^{-1}=S$, we have $x\vartriangleleft h=S\left(h\right)x=x\blacktriangleleft h$.

By Theorem \ref{thm:indecomposableGold}, we have $\mathcal{L}_{\mathrm{in}}\left(A_{A}\right)=\left\{ N_{s,0}=e_{s}\Bbbk\mid0\leq s\leq n-1\right\} $.

By Theorem \ref{thm:orthog-indec}, we have $N_{s,0}^{\perp}=\bigoplus_{t\neq\mu\left(s\right)}N_{t,0}=\bigoplus_{t\not\equiv_{n}-s}N_{t,0}$.
\begin{rem}
Consider a cyclic code $I$ and its dual $I^{\perp}$. We want to
show that $I^{\perp}$ is, indeed, the classical orthogonal code.
Consider the generator polynomial $g\left(X\right)=g_{0}+g_{1}X+\cdots+g_{s-1}X^{s-1}+X^{s}$
for the code $I$ so that $I=\left(g\left(x\right)\right)$. Note
that $d:=\mathrm{dim}I=n-s$ as a $\Bbbk$-basis for $I$ is given
by $x^{i}g\left(x\right),0\leq i\leq n-s-1$. Let
\[
h\left(X\right):=h_{0}+h_{1}X+\cdots+h_{d-1}X^{d-1}+X^{d}
\]
be the unique monic polynomial such that $g\left(X\right)h\left(X\right)=X^{n}-1$,
i.e. the parity-check polynomial. Remark that $h(0)=h_{0}\neq0$ and
$g(x)h(x)=0$. By Lemma \ref{lem:leftriang} we have
\begin{align*}
\left(S(h(x))\right)^{\perpl} & =\mathrm{Ann}_{H_{\bullet}}\left(h(x)\right)=\left(g\left(x\right)\right)=I.
\end{align*}
Since $\perpl=\perpr$ we get $I^{\perp}=\left(S(h(x))\right)^{\perp\perp}=\left(S(h(x))\right)$.
Now define
\[
g^{\perp}(X)=h_{0}^{-1}X^{d}h(X^{-1})\in\Bbbk\left[X\right].
\]
We have $\deg g^{\perp}(X)=d=\deg h(X)$ and $g^{\perp}(x)=h_{0}^{-1}x^{d}S(h(x))\in I^{\perp}$
thus $\left(g^{\perp}(x)\right)\subseteq I^{\perp}$. Since $\mathrm{dim}\left(g^{\perp}(x)\right)=n-\deg g^{\perp}(X)=n-d=s=\mathrm{dim}\left(I^{\perp}\right)$,
we can conclude
\[
I^{\perp}=\left(S(h(x))\right)=\left(g^{\perp}(x)\right)
\]
so $I^{\perp}$ is the classical dual code.

\end{rem}

\subsection{Taft algebra \label{subsec:Taft-algebra}}

Let us consider the main example investigated in \cite{CGL}. Let
$N\geq2$ be an integer and let $q\in\Bbbk$ be a primitive $N$-th
root of unity. Consider the Taft algebra
\[
A=\Bbbk\left\langle g,x\mid g^{N}=1,x^{N}=0,gx=qxg\right\rangle .
\]
It is a Hopf algebra in a unique way such that
\[
\Delta\left(g\right)=g\otimes g\qquad\text{and}\qquad\Delta\left(x\right)=g\otimes x+x\otimes1.
\]
Note that $A$ has basis $\left\{ x^{n}g^{a}\mid0\leq n,a\leq N-1\right\} .$
From the structure above it follows that $S\left(g\right)=g^{-1}$
and $S\left(x\right)=-g^{-1}x=-q^{-1}xg^{-1}$.

Let $R=\Bbbk\left[x\mid x^{N}=0\right]$ be the subalgebra of $A$
generated by $x$.

Set $G:=\left\langle g\right\rangle $ the group of group-like elements
in $A$ and set $H:=\Bbbk G$. For the reader's sake we include in
the following lemmas the proofs of main facts that will be used later
on.
\begin{lem}
\label{lem:formuTaft}The following equalities hold for every $m\in\mathbb{N},t,a\in\mathbb{\mathbb{Z}},$

\begin{align}
e_{t}x^{m} & =x^{m}e_{t+m},\qquad g^{a}e_{t}=e_{t}g^{a}=q^{-ta}e_{t}\label{eq:TaftId}\\
S\left(e_{t}\right) & =e_{-t},\qquad S\left(x^{m}\right)=\left(-1\right)^{m}q^{\frac{-m\left(m+1\right)}{2}}x^{m}g^{-m},\qquad S^{2}\left(x^{m}\right)=q^{-m}x^{m}.\label{eq:antipode}
\end{align}
A basis of $A$ is given by $\left\{ x^{m}e_{s}\mid0\leq s,m\leq N-1\right\} $.

As a consequence $A=\Bbbk\left(\omega,N\right)$ as in Section \ref{sec:Indec-ideals}
where the permutation $\omega$ of the indexes of the $e_{s}$'s is
defined by $\omega\left(s\right):=s+1$ modulo $N$.
\end{lem}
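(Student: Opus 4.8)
The plan is to derive the identities \eqref{eq:TaftId} and \eqref{eq:antipode} by direct manipulation of the idempotents $e_{t}:=\tfrac{1}{N}\sum_{i=0}^{N-1}q^{ti}g^{i}$, and then to obtain the basis assertion and the isomorphism $A\cong\Bbbk\left(\omega,N\right)$ from a dimension count. The computational backbone is the commutation rule $g^{i}x^{j}=q^{ij}x^{j}g^{i}$ (equivalently $x^{j}g^{-i}=q^{ij}g^{-i}x^{j}$) for all $i,j\in\mathbb{Z}$, which follows from $gx=qxg$ by a routine double induction; I would establish it first and keep it as a standing convention, since all the exponent bookkeeping rests on it. For \eqref{eq:TaftId}, since $e_{t}$ is a polynomial in $g$ it commutes with every power of $g$, and reindexing $g^{a}e_{t}=\tfrac{1}{N}\sum_{i}q^{ti}g^{a+i}$ by $j=a+i$ gives $g^{a}e_{t}=e_{t}g^{a}=q^{-ta}e_{t}$, the indices of the $e$'s being read modulo $N$; for the second identity I would first check $e_{t}x=\tfrac{1}{N}\sum_{i}q^{ti}g^{i}x=\tfrac{1}{N}\sum_{i}q^{ti}q^{i}xg^{i}=xe_{t+1}$ and then iterate to get $e_{t}x^{m}=x^{m}e_{t+m}$.

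For \eqref{eq:antipode}, applying $S$ to the defining sum gives $S\left(e_{t}\right)=\tfrac{1}{N}\sum_{i}q^{ti}g^{-i}=e_{-t}$ after the substitution $i\mapsto-i$. Since $S$ is an anti-algebra map and $S\left(x\right)=-g^{-1}x$, we have $S\left(x^{m}\right)=\left(-g^{-1}x\right)^{m}=\left(-1\right)^{m}\left(g^{-1}x\right)^{m}$; pushing all the $g^{-1}$'s to the left via $xg^{-1}=qg^{-1}x$ yields $\left(g^{-1}x\right)^{m}=q^{m\left(m-1\right)/2}g^{-m}x^{m}$, and then commuting $g^{-m}$ past $x^{m}$ with the standing rule converts the exponent $\tfrac{m\left(m-1\right)}{2}-m^{2}$ into $-\tfrac{m\left(m+1\right)}{2}$, so $S\left(x^{m}\right)=\left(-1\right)^{m}q^{-m\left(m+1\right)/2}x^{m}g^{-m}$. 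Finally $S^{2}\left(x\right)=S\left(-g^{-1}x\right)=-S\left(x\right)S\left(g^{-1}\right)=-\left(-g^{-1}x\right)g=g^{-1}xg=q^{-1}x$ (using $S\left(g^{-1}\right)=g$), and since $S^{2}$ is an algebra endomorphism, $S^{2}\left(x^{m}\right)=q^{-m}x^{m}$.

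For the basis and the identification with $\Bbbk\left(\omega,N\right)$, I would start from the known basis $\left\{x^{n}g^{a}\mid0\le n,a\le N-1\right\}$, which gives $A=\bigoplus_{n=0}^{N-1}x^{n}\Bbbk G$; since $q$ is a primitive $N$-th root of unity, the Vandermonde matrix $\left(q^{ti}\right)_{t,i}$ is invertible, so $\left\{e_{0},\dots,e_{N-1}\right\}$ is a basis of $\Bbbk G$ and hence $\left\{x^{n}e_{s}\mid0\le n,s\le N-1\right\}$ is a basis of $A$. The same invertibility (or short direct computations) shows $e_{s}e_{t}=\delta_{s,t}^{\equiv_{N}}e_{t}$ and $\sum_{s}e_{s}=1$, so the $e_{s}$'s are a complete set of orthogonal idempotents; combined with $x^{N}=0$ and $e_{s}x=xe_{s+1}$ from the first part, this shows the relations \eqref{eq:relA} hold in $A$ with $\omega\left(s\right)=s+1$ modulo $N$. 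Since moreover $g=\sum_{t}q^{-t}e_{t}$, the set $\left\{e_{s},x\right\}$ generates $A$, so there is a surjective algebra map $\Bbbk\left(\omega,N\right)\twoheadrightarrow A$; both algebras have dimension $N^{2}$ (by the basis $\mathcal{B}$ of $\Bbbk\left(\omega,N\right)$ recorded in Section \ref{sec:Indec-ideals} and by the basis just exhibited for $A$), so this map is an isomorphism.

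I do not expect a genuine obstacle: the lemma is essentially a bookkeeping exercise. The one place where care is needed is keeping the $q$-exponents consistent when moving powers of $g^{-1}$ through powers of $x$ in the computation of $S\left(x^{m}\right)$, which is exactly why I would pin down the conventions $g^{i}x^{j}=q^{ij}x^{j}g^{i}$ and $x^{j}g^{-i}=q^{ij}g^{-i}x^{j}$ at the outset and remember that all index arithmetic on the $e_{t}$'s takes place modulo $N$.
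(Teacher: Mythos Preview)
Your proposal is correct and follows essentially the same direct-computation approach as the paper: both derive \eqref{eq:TaftId} by expanding the defining sum for $e_t$ and reindexing, and both obtain \eqref{eq:antipode} by writing $S(x^m)=(-g^{-1}x)^m$ and tracking the $q$-exponents via the commutation rule. Your argument is somewhat more explicit than the paper's in two places---you spell out the Vandermonde/change-of-basis reason that $\{x^m e_s\}$ is a basis, and you exhibit the surjection $\Bbbk(\omega,N)\twoheadrightarrow A$ plus a dimension count for the identification---whereas the paper simply asserts these as immediate consequences; but this is a difference in level of detail, not in strategy.
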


\begin{proof}
We compute, for every $m\in\mathbb{N},t,a\in\mathbb{\mathbb{Z}},$
\begin{align*}
e_{t}x^{m} & =\frac{1}{N}\sum_{i=0}^{N-1}q^{ti}g^{i}x^{m}=\frac{1}{N}\sum_{i=0}^{N-1}q^{\left(t+m\right)i}x^{m}g^{i}=x^{m}e_{t+m},\\
e_{t}g^{a} & =\frac{1}{N}\sum_{i=0}^{N-1}q^{ti}g^{i+a}=\frac{1}{N}\sum_{i=0}^{N-1}q^{-ta}q^{t\left(i+a\right)}g^{i+a}=q^{-ta}e_{t},\\
S\left(e_{t}\right) & =\frac{1}{N}\sum_{a=0}^{N-1}q^{ta}g^{-a}=\frac{1}{N}\sum_{a=0}^{N-1}q^{\left(-t\right)\left(-a\right)}g^{-a}=e_{-t},\\
S\left(x^{m}\right) & =S\left(x\right)^{m}=\left(-g^{-1}x\right)^{m}=\left(-1\right)^{m}q^{\frac{-m\left(m+1\right)}{2}}x^{m}g^{-m},\\
S^{2}\left(x^{m}\right) & =\left(S\left(-g^{-1}x\right)\right)^{m}=\left(-S\left(x\right)S\left(g^{-1}\right)\right)^{m}=\left(g^{-1}xg\right)^{m}=q^{-m}x^{m}.
\end{align*}
Since $e_{t}g^{a}=g^{a}e_{t},$ we get the thesis. These equations
guarantee that the set $\left\{ x^{m}e_{s}\mid0\leq s,m\leq N-1\right\} $
is a basis of $A$.
\end{proof}
\begin{lem}
We have $\int_{l}\left(A^{\ast}\right)=\Bbbk\lambda$ where
\begin{align}
\lambda\left(x^{m}g^{a}\right) & =\delta_{m,N-1}\delta_{a,1}^{\equiv_{N}}\text{ for all }m\in\mathbb{N},a\in\mathbb{Z},\nonumber \\
\lambda\left(x^{m}e_{s}\right) & =\frac{1}{N}q^{s}\delta_{m,N-1}\text{ for all }m\in\mathbb{N},s\in\mathbb{Z}.\label{eq:lambdaT}
\end{align}
\end{lem}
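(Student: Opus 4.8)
The plan is to use that $\int_{l}(A^{\ast})$ is one-dimensional---which holds since $A$ is a finite-dimensional Hopf algebra, as recalled at the start of Section \ref{sec:rat}---so it is enough to verify that the prescribed $\lambda$ is a well-defined non-zero left integral in $A^{\ast}$ and then to re-express it on the basis $\left\{x^{m}e_{s}\right\}$. Well-definedness and non-vanishing are immediate: $\delta_{a,1}^{\equiv_{N}}$ depends only on $a$ modulo $N$ and $\delta_{m,N-1}$ forces the value $0$ when $m\geq N$, while $\lambda(x^{N-1}g)=1$. For the second formula I would substitute $e_{s}=\frac{1}{N}\sum_{i=0}^{N-1}q^{si}g^{i}$ and compute
\[
\lambda(x^{m}e_{s})=\frac{1}{N}\sum_{i=0}^{N-1}q^{si}\lambda(x^{m}g^{i})=\frac{1}{N}\sum_{i=0}^{N-1}q^{si}\delta_{m,N-1}\delta_{i,1}^{\equiv_{N}}=\frac{1}{N}q^{s}\delta_{m,N-1},
\]
the last equality holding because $i=1$ is the unique index in $\left\{0,\dots,N-1\right\}$ with $i\equiv_{N}1$.

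For the integral property, I would recall from Section \ref{sec:rat} that $\lambda\in\int_{l}(A^{\ast})$ means $\sum h_{1}\lambda(h_{2})=1_{A}\lambda(h)$ for all $h\in A$, and that it suffices to test $h=x^{m}g^{a}$ with $0\le m,a\le N-1$. The only structural input needed is the ``leading term'' of the coproduct of $x^{m}$: a short induction from $\Delta(x)=g\otimes x+x\otimes1$ and the relations of Lemma \ref{lem:formuTaft} gives
\[
\Delta(x^{m})=g^{m}\otimes x^{m}+\left(\text{a sum of terms }\xi\otimes\zeta\text{ with }\deg_{x}\zeta<m\right),
\]
and hence $\Delta(x^{m}g^{a})=g^{m+a}\otimes x^{m}g^{a}+\left(\text{terms }\xi\otimes\zeta\text{ with }\deg_{x}\zeta<m\right)$, since right multiplication by $g^{a}$ does not affect the $x$-degree. (Equivalently one may expand $\Delta(x^{m})$ in full by the $q$-binomial theorem, the summands $g\otimes x$ and $x\otimes1$ being $q$-commuting.)

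I would then apply $\lambda$ to the right tensor leg. Since $\lambda$ vanishes on every basis vector $x^{j}g^{b}$ with $j\neq N-1$, and the ``lower'' terms above have $x$-degree $<m\le N-1$, they all die, leaving $\sum(x^{m}g^{a})_{1}\lambda((x^{m}g^{a})_{2})=g^{m+a}\lambda(x^{m}g^{a})=g^{m+a}\delta_{m,N-1}\delta_{a,1}^{\equiv_{N}}$. When $m\le N-2$ or $a\not\equiv_{N}1$ this equals $0=1_{A}\lambda(x^{m}g^{a})$; when $m=N-1$ and $a\equiv_{N}1$ it equals $g^{N-1+a}=1_{A}$ (as $N-1+a\equiv_{N}0$), which is again $1_{A}\lambda(x^{N-1}g^{a})$. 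Hence $\lambda$ satisfies the integral condition, and one-dimensionality of $\int_{l}(A^{\ast})$ forces $\int_{l}(A^{\ast})=\Bbbk\lambda$. There is no genuine obstacle in this lemma; the one spot that requires a line of argument is the identification of the leading term of $\Delta(x^{m})$, and everything else is bookkeeping with \eqref{eq:TaftId}.
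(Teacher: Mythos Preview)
Your proposal is correct and follows the same route as the paper: the paper simply asserts ``One checks that $\int_{l}(A^{\ast})=\Bbbk\lambda$'' and then performs exactly your computation of $\lambda(x^{m}e_{s})$ by expanding $e_{s}$. Your verification of the left-integral identity via the leading term of $\Delta(x^{m})$ merely fills in the details the paper suppresses with ``One checks.''
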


\begin{proof}
One checks that $\int_{l}\left(H^{\ast}\right)=\Bbbk\lambda$ where
$\lambda\left(x^{m}g^{a}\right)=\delta_{m,N-1}\delta_{a,1}$ for $0\leq m,a\leq N-1.$
From this expression for $\lambda$ one easily deduces the general
one for $m\in\mathbb{N},a\in\mathbb{Z}$. From it we get
\begin{align*}
\lambda\left(x^{m}e_{s}\right) & =\lambda\left(x^{m}\frac{1}{N}\sum_{i=0}^{N-1}q^{si}g^{i}\right)=\frac{1}{N}\sum_{i=0}^{N-1}q^{si}\lambda\left(x^{m}g^{i}\right)\\
 & =\frac{1}{N}\sum_{i=0}^{N-1}q^{si}\delta_{m,N-1}\delta_{i,1}=\frac{1}{N}q^{s}\delta_{m,N-1}.
\end{align*}
\end{proof}
We now compute explicitly our bilinear form in two slightly different
basis, the first one needed in the proof of Theorem \ref{thm:orthogonal}.
\begin{lem}
\label{lem:scalar-1}We have
\begin{align*}
\left\langle e_{t}x^{n},e_{s}x^{m}\right\rangle  & =\frac{1}{N}\left(-1\right)^{n}q^{\frac{-\left(n+2t\right)\left(n+1\right)}{2}}\delta_{s+t,1}^{\equiv N}\delta_{m+n,N-1};\\
\left\langle x^{v}e_{b},x^{u}e_{a}\right\rangle  & =\frac{1}{N}\left(-1\right)^{v}q^{\frac{\left(v-2b\right)\left(v+1\right)}{2}}\delta_{a+b,0}^{\equiv N}\delta_{u+v,N-1}.
\end{align*}

In particular the form $\left\langle -,-\right\rangle $ is as in
\eqref{eq:formcan}, where $\mu\left(t\right):=\left[1-t\right]_{N}$,
$\nu\left(n\right):=N-1-n$ and $d_{\left(s,m\right)}:=\frac{1}{N}\left(-1\right)^{m}q^{\frac{-\left(m+2s\right)\left(m+1\right)}{2}}$.
Here $\left[t\right]_{N}$ denotes the remainder modulo $N$ of $t$.
As a consequence $\left\langle -,-\right\rangle $ is monomial with
respect to $\mathcal{B}=\left\{ e_{s}x^{m}\mid0\leq s,m\leq N-1\right\} $.
Moreover the Nakayama isomorphism $\gamma$ is given by $\gamma\left(h\right)=hg$,
for every $h\in A$.
\end{lem}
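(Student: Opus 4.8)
The plan is to verify the two displayed identities by a direct computation from Lemma~\ref{lem:formuTaft} and the formula \eqref{eq:lambdaT} for $\lambda$, then to read off $\mu$, $\nu$, the scalars $d_{(s,m)}$ and to conclude monomiality via Lemma~\ref{lem:scalarcan}, and finally to identify the Nakayama isomorphism.

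For the first identity I would expand $\left\langle e_{t}x^{n},e_{s}x^{m}\right\rangle =\lambda\left[e_{s}x^{m}S\left(e_{t}x^{n}\right)\right]$. Since $S$ is an anti-algebra map, \eqref{eq:antipode} gives $S\left(e_{t}x^{n}\right)=S\left(x^{n}\right)S\left(e_{t}\right)=\left(-1\right)^{n}q^{-n\left(n+1\right)/2}\,x^{n}g^{-n}e_{-t}$, and then $g^{-n}e_{-t}=q^{-tn}e_{-t}$ together with $x^{n}e_{-t}=e_{-t-n}x^{n}$ from \eqref{eq:TaftId} rewrites this as $\left(-1\right)^{n}q^{-n\left(n+1\right)/2-tn}\,e_{-t-n}x^{n}$. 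Multiplying on the left by $e_{s}x^{m}$ and moving $x^{m}$ past the idempotent via \eqref{eq:TaftId} collapses the product to $\delta_{s,-t-n-m}^{\equiv_{N}}\left(-1\right)^{n}q^{-n\left(n+1\right)/2-tn}\,e_{s}x^{m+n}$; applying $\lambda$ via \eqref{eq:lambdaT} forces $m+n=N-1$ and contributes $\tfrac{1}{N}q^{s+m+n}=\tfrac{1}{N}q^{s-1}$. On the support one has $s\equiv1-t$, so the Kronecker symbols combine to $\delta_{s+t,1}^{\equiv_{N}}\delta_{m+n,N-1}$, $q^{s-1}=q^{-t}$, and the $q$-exponent reorganizes as $-\tfrac{n(n+1)}{2}-t(n+1)=-\tfrac{(n+2t)(n+1)}{2}$, which is the claimed formula. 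The second identity then follows with no new work: by \eqref{eq:TaftId} one has $x^{v}e_{b}=e_{b-v}x^{v}$ and $x^{u}e_{a}=e_{a-u}x^{u}$, so substituting $t=b-v$, $n=v$, $s=a-u$, $m=u$ into the first identity and using $v+2(b-v)=2b-v$, together with $(a-u)+(b-v)\equiv a+b+1$ on the support $u+v=N-1$, gives the stated expression.

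Reading the first identity with the arguments ordered as in \eqref{eq:formcan} identifies $\mu(s)=[1-s]_{N}$, $\nu(m)=N-1-m$ and $d_{(s,m)}=\tfrac{1}{N}(-1)^{m}q^{-(m+2s)(m+1)/2}$; since $\mu$ and $\nu$ are (involutive) permutations, $\left\langle-,-\right\rangle$ is of the form \eqref{eq:formcan} and Lemma~\ref{lem:scalarcan} applies, giving monomiality and $\gamma\left(e_{s}x^{m}\right)=\tfrac{d_{(s,m)}}{d_{(\mu(s),\nu(m))}}e_{\mu^{2}(s)}x^{\nu^{2}(m)}=\tfrac{d_{(s,m)}}{d_{(\mu(s),\nu(m))}}e_{s}x^{m}$ because $\mu^{2}=\nu^{2}=\mathrm{Id}$. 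A short simplification of the ratio of $d$'s — the combined $q$-exponent telescopes to $\tfrac{N(N+1)}{2}-(N+1)(m+s)$ and the sign part to $(-1)^{N+1}$ — shows the scalar equals $q^{-(s+m)}$; since $e_{s}x^{m}g=q^{-(s+m)}e_{s}x^{m}$ by \eqref{eq:TaftId}, this is exactly $e_{s}x^{m}g$, whence $\gamma(h)=hg$ for all $h\in A$ by linearity. Equivalently, one can note $\gamma(e_{s})=q^{-s}e_{s}$, so $\gamma(1)=\sum_{s}q^{-s}e_{s}=g$, and then invoke Lemma~\ref{lem:Involutory}(1) together with $S^{2}(x^{m})=q^{-m}x^{m}$ and $gx^{m}=q^{m}x^{m}g$.

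The only real obstacle is the exponent bookkeeping; the one place where care is genuinely needed is the evaluation $q^{N(N+1)/2}=(-1)^{N+1}$, which must be checked separately for $N$ odd and $N$ even (using $q^{N/2}=-1$ in the even case), and it is precisely this identity that makes the spurious signs cancel when simplifying the ratio $d_{(s,m)}/d_{(\mu(s),\nu(m))}$.
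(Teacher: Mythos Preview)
Your proof is correct and follows essentially the same route as the paper: a direct expansion of $\lambda\!\left[e_{s}x^{m}S(e_{t}x^{n})\right]$ via \eqref{eq:antipode} and \eqref{eq:TaftId}, then \eqref{eq:lambdaT}, followed by the substitution $t=b-v$, $s=a-u$ for the second formula, and finally Lemma~\ref{lem:scalarcan} together with the simplification of $d_{(s,m)}/d_{(\mu(s),\nu(m))}$ using $q^{N(N+1)/2}=(-1)^{N+1}$. The only cosmetic differences are that the paper keeps the idempotent on the right (as $x^{m+n}e_{-t}$) rather than the left when applying $\lambda$, and derives the key sign identity from the factorization $X^{N}-1=\prod_{i}(X-q^{i})$ instead of your parity split; your extra remark invoking Lemma~\ref{lem:Involutory}(1) as an alternative for $\gamma$ is a nice shortcut not present in the paper.
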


\begin{proof}
We compute
\begin{align*}
e_{s}x^{m}S\left(e_{t}x^{n}\right) & =e_{s}x^{m}S\left(x^{n}\right)S\left(e_{t}\right)\\
 & =e_{s}x^{m}\left(-1\right)^{n}q^{\frac{-n\left(n+1\right)}{2}}x^{n}g^{-n}e_{-t}\\
 & =\left(-1\right)^{n}q^{\frac{-n\left(n+1\right)}{2}}e_{s}x^{m+n}g^{-n}e_{-t}\\
 & =\left(-1\right)^{n}q^{\frac{-n\left(n+1\right)}{2}-tn}e_{s}x^{m+n}e_{-t}\\
 & =\left(-1\right)^{n}q^{\frac{-n\left(n+1\right)}{2}-tn}x^{m+n}e_{s+m+n}e_{-t}\\
 & =\left(-1\right)^{n}q^{\frac{-n\left(n+1\right)}{2}-tn}\delta_{s+t+m+n,0}^{\equiv N}x^{m+n}e_{-t}.
\end{align*}
Thus we have
\begin{align*}
\left\langle e_{t}x^{n},e_{s}x^{m}\right\rangle  & =\lambda\left[e_{s}x^{m}S\left(e_{t}x^{n}\right)\right]\\
 & =\left(-1\right)^{n}q^{\frac{-n\left(n+1\right)}{2}-tn}\delta_{s+t+m+n,0}^{\equiv N}\lambda\left(x^{m+n}e_{-t}\right)\\
 & =\left(-1\right)^{n}q^{\frac{-n\left(n+1\right)}{2}-tn}\frac{1}{N}q^{-t}\delta_{s+t+m+n,0}^{\equiv N}\delta_{m+n,N-1}\\
 & =\frac{1}{N}\left(-1\right)^{n}q^{\frac{-\left(n+2t\right)\left(n+1\right)}{2}}\delta_{s+t+N-1,0}^{\equiv N}\delta_{m+n,N-1}\\
 & =\frac{1}{N}\left(-1\right)^{n}q^{\frac{-\left(n+2t\right)\left(n+1\right)}{2}}\delta_{s+t,1}^{\equiv N}\delta_{m+n,N-1}.
\end{align*}
Moreover
\begin{align*}
\left\langle x^{v}e_{b},x^{u}e_{a}\right\rangle  & =\left\langle e_{b-v}x^{v},e_{a-u}x^{u}\right\rangle =\frac{1}{N}\left(-1\right)^{v}q^{\frac{-\left(v+2b-2v\right)\left(v+1\right)}{2}}\delta_{a-u+b-v,1}^{\equiv N}\delta_{u+v,N-1}\\
 & =\frac{1}{N}\left(-1\right)^{v}q^{\frac{\left(v-2b\right)\left(v+1\right)}{2}}\delta_{a+b,0}^{\equiv N}\delta_{u+v,N-1}.
\end{align*}

By the foregoing, it is clear that $\left\langle -,-\right\rangle $
is as in \ref{eq:formcan}, where $\mu\left(t\right):=\left[1-t\right]_{N}$
and $\nu\left(n\right):=N-1-n$.

Thus, by Lemma \ref{lem:scalarcan} the form $\left\langle -,-\right\rangle $
is monomial with respect to the basis $\mathcal{B}=\left\{ e_{s}x^{m}\mid0\leq s,m\leq N-1\right\} $.
Moreover, $\gamma$ is given by
\[
\gamma\left(e_{s}x^{m}\right)=\frac{d_{\left(s,m\right)}}{d_{\left(\mu\left(s\right),\nu\left(m\right)\right)}}e_{\mu^{2}\left(s\right)}x^{\nu^{2}\left(m\right)}\qquad\forall s,m.
\]
 Since $\mu^{2}=\mathrm{Id}=\nu^{2}$ we obtain $\gamma\left(e_{s}x^{m}\right)=\frac{d_{\left(s,m\right)}}{d_{\left(\mu\left(s\right),\nu\left(m\right)\right)}}e_{s}x^{m}$.
We compute
\begin{align*}
\frac{d_{\left(s,m\right)}}{d_{\left(\mu\left(s\right),\nu\left(m\right)\right)}} & =\frac{\frac{1}{N}\left(-1\right)^{m}q^{\frac{-\left(m+2s\right)\left(m+1\right)}{2}}}{\frac{1}{N}\left(-1\right)^{N-1-m}q^{\frac{-\left(N-1-m+2-2s\right)\left(N-1-m+1\right)}{2}}}\\
 & =\left(-1\right)^{N-1}q^{\frac{-\left(m+2s\right)\left(m+1\right)+\left(N-m+1-2s\right)\left(N-m\right)}{2}}\\
 & =\left(-1\right)^{N-1}q^{\frac{N-2m-2s-2Nm-2Ns+N\text{\texttwosuperior}}{2}}=\left(-1\right)^{N-1}q^{-\frac{N(N+1)}{2}-\left(m+s\right)}\\
 & =\left(-1\right)^{N-1}\left(-1\right)^{N-1}q^{-\left(m+s\right)}=q^{-\left(m+s\right)}.
\end{align*}
where we note that $x^{N}-1=\prod_{i=0}^{N-1}\left(x-q^{i}\right)$
implies $-1=\left(-1\right)^{N}q^{\frac{N\left(N-1\right)}{2}}$ and
hence $q^{-\frac{N(N+1)}{2}}=\left(-1\right)^{N-1}$. Thus $\gamma\left(e_{s}x^{m}\right)=q^{-\left(m+s\right)}e_{s}x^{m}=q^{-m}e_{s}gx^{m}=e_{s}x^{m}g$
and hence $\gamma\left(h\right)=hg$ for every $h\in H$.
\end{proof}
\begin{lem}
\label{lem:lambdaS}In the setting of Proposition \ref{prop:action},
take $V$ the Taft algebra $A$ with left regular action. Then $\vartriangleleft=\blacktriangleleft$
and, for every $x\in H,r\in R$, we have $x\vartriangleleft r=S\left(r\right)x.$
\end{lem}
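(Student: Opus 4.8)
The statement splits into two independent assertions, which I would treat in turn. For the equality $\vartriangleleft=\blacktriangleleft$, the plan is to invoke the last sentence of Proposition~\ref{prop:action}(1): since here $V=A$ carries the left regular action, one has $\vartriangleleft=\blacktriangleleft$ exactly when the Nakayama isomorphism $\gamma$ is left $A$-linear. By Lemma~\ref{lem:scalar-1} the Nakayama isomorphism of the form attached to the Taft algebra is $\gamma(h)=hg$, and this is manifestly left $A$-linear, since $\gamma(ch)=chg=c(hg)=c\gamma(h)$ for all $c,h\in A$. Hence $\vartriangleleft=\blacktriangleleft$ with no further work.

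For the formula $x\vartriangleleft r=S(r)x$ I would argue by pairing against an arbitrary $y\in A$ and using non-degeneracy of $\langle-,-\rangle$, which incidentally yields the identity for every $x\in A$, not just $x\in H$. Unwinding the defining property of $\vartriangleleft$ from Proposition~\ref{prop:action} together with \eqref{eq:formHopf} gives $\langle x\vartriangleleft r,y\rangle=\langle x,ry\rangle=\lambda\big(ryS(x)\big)$, while again by \eqref{eq:formHopf} one has $\langle S(r)x,y\rangle=\lambda\big(yS(x)S^{2}(r)\big)$. As $yS(x)$ ranges over all of $A$ when $x,y$ do (bijectivity of $S$), the whole claim reduces to the single identity
\[
\lambda(rz)=\lambda\!\left(zS^{2}(r)\right)\qquad\text{for all }z\in A,\ r\in R,
\]
that is, to the assertion that the Nakayama automorphism $\eta$ of the Frobenius form $b(x,y)=\lambda(xy)$ restricts to $S^{2}$ on $R$. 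Granting this, one may alternatively conclude at once from Lemma~\ref{lem:leftriang}, which gives $x\vartriangleleft r=S^{-1}(\eta(r))x=S^{-1}(S^{2}(r))x=S(r)x$.

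To verify the displayed identity I would expand both sides on the basis $\{x^{u}e_{a}\mid 0\le u,a\le N-1\}$ of $A$ and the basis $\{x^{v}\mid 0\le v\le N-1\}$ of $R$, using $S^{2}(x^{v})=q^{-v}x^{v}$ from \eqref{eq:antipode}, the straightening rule $e_{a}x^{v}=x^{v}e_{a+v}$ from \eqref{eq:TaftId}, and the explicit integral values $\lambda(x^{m}e_{s})=\tfrac1N q^{s}\delta_{m,N-1}$ from \eqref{eq:lambdaT}; both sides then collapse to $\tfrac1N q^{a}\delta_{u+v,N-1}$. Together with $\vartriangleleft=\blacktriangleleft$ this finishes the lemma.

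I do not expect a genuine obstacle: the computation is routine. The one point that requires a little care is the ``modular'' behaviour of $\lambda$ — it annihilates $R$ yet detects the top-degree part in $x$, so each side of $\lambda(rz)=\lambda(zS^{2}(r))$ reduces to a single monomial — and the actual content of the argument is simply that the scalar $q^{-v}$ produced by $S^{2}$ cancels the power of $q$ picked up when $e_{a}$ is moved past $x^{v}$.
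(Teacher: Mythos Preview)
Your proof is correct and follows essentially the same route as the paper: both parts proceed by (i) invoking Lemma~\ref{lem:scalar-1} to see that $\gamma(h)=hg$ is left $A$-linear, whence $\vartriangleleft=\blacktriangleleft$ by Proposition~\ref{prop:action}, and (ii) verifying the identity $\lambda(rz)=\lambda(zS^{2}(r))$ on basis elements to conclude $\eta|_{R}=S^{2}|_{R}$ and hence $x\vartriangleleft r=S(r)x$ via Lemma~\ref{lem:leftriang}. The only cosmetic differences are that the paper works with the basis $\{e_{s}x^{m}\}$ rather than $\{x^{u}e_{a}\}$, and inserts an auxiliary computation of $\lambda S$ that is not actually needed for the argument.
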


\begin{proof}
By Lemma \ref{lem:scalar-1}, we have that the Nakayama isomorphism
$\gamma$ is given by $\gamma\left(h\right)=hg,$ for every $h\in A$.
Thus $\gamma$ is left $A$-linear. By Proposition \ref{prop:action},
we have that $\vartriangleleft=\blacktriangleleft$.

By \eqref{eq:lambdagamma}, we have that
\[
\lambda S\left(x^{n}e_{t}\right)=\lambda\gamma\left(x^{n}e_{t}\right)=\lambda\left(x^{n}e_{t}g\right)\overset{\eqref{eq:TaftId}}{=}q^{-t}\lambda\left(x^{n}e_{t}\right)\overset{\eqref{eq:lambdaT}}{=}=\frac{1}{N}\delta_{n,N-1}.
\]

We compute
\begin{align*}
\lambda\left(x^{u}e_{s}x^{m}\right) & =\lambda\left(e_{s-u}x^{u}x^{m}\right)\\
 & =\lambda\left(e_{s-u}x^{m+u}\right)\\
 & =\frac{1}{N}q^{s+m}\delta_{m+u,N-1}\\
 & =q^{-u}\frac{1}{N}q^{s+m+u}\delta_{m+u,N-1}\\
 & =q^{-u}\lambda\left(e_{s}x^{m+u}\right)\\
 & =\lambda\left(e_{s}x^{m}q^{-u}x^{u}\right)\\
 & \overset{\eqref{eq:antipode}}{=}\lambda\left(e_{s}x^{m}S^{2}\left(x^{u}\right)\right)
\end{align*}
which implies $\lambda\left(rh\right)=\lambda\left(hS^{2}\left(r\right)\right)$
for every $r\in R,h\in A$. As a consequence, if $\eta$ is the Nakayama
automorphism, we get $\eta\left(r\right)=S^{2}\left(r\right)$ for
every $r\in R$. By Lemma \ref{lem:leftriang}we have $x\vartriangleleft r=S^{-1}\left(\eta\left(r\right)\right)x=S\left(r\right)x.$
\end{proof}
We are now ready to compute the indecomposable ideals and their orthogonals.

Denote by $\mathcal{L}\left(M_{R}\right)$ the set of right $R$-submodules
of a given right $R$-module $M_{R}.$ Let $\mathcal{L}_{\mathrm{in}}\left(M_{R}\right)$
denote the set of right $R$-submodules which are indecomposable.
\begin{thm}
\label{thm:indecomposable}Consider the Taft Hopf algebra
\[
A=\Bbbk\left\langle g,x\mid g^{N}=1,x^{N}=0,gx=qxg\right\rangle
\]
 and let $R$ be the subalgebra of $A$ generated by $x.$ As in \cite{CGL},
for $s,t=0,\ldots,N-1$ set $N_{s,t}:=e_{s}J^{t}=e_{s}x^{t}R.$ Then
the $N_{s,t}$'s form an irredundant set of representatives of $\mathcal{L}_{\mathrm{in}}\left(A_{A}\right)$
and
\[
\mathcal{L}_{\mathrm{in}}\left(A_{A}\right)=\left\{ \left(1+rx\right)N_{s,t}\mid r\in R,0\leq s,t\leq N-1\right\} .
\]
\end{thm}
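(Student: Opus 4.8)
The statement is the Taft-algebra specialisation of Theorem~\ref{thm:indecomposableGold}, so the plan is to deduce it as a corollary rather than to redo any structural argument. By Lemma~\ref{lem:formuTaft}, the Taft algebra $A=\Bbbk\left\langle g,x\mid g^{N}=1,x^{N}=0,gx=qxg\right\rangle$ is precisely the algebra $\Bbbk\left(\omega,N\right)$ of Section~\ref{sec:Indec-ideals}, with underlying set $S=\left\{0,1,\dots,N-1\right\}$ and permutation $\omega\left(s\right):=s+1$ modulo $N$; moreover the idempotents $e_{s}$ are those appearing in the presentation \eqref{eq:relA}, and the subalgebra $R=\Bbbk\left[x\mid x^{N}=0\right]$ generated by $x$ is literally the same in both descriptions. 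Consequently the right ideals $N_{s,t}:=e_{s}J^{t}=e_{s}x^{t}R$ introduced here are exactly those denoted $N_{s,t}$ in Theorem~\ref{thm:indecomposableGold} (note that $J=xR$ is the ideal of $R$ generated by $x$, so $J^{t}=x^{t}R$).

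\textbf{Main steps.} First I would invoke Theorem~\ref{thm:indecomposableGold} applied to this $A=\Bbbk\left(\omega,N\right)$, which yields that the $N_{s,t}$ with $s\in S$ and $0\le t\le N-1$ form an irredundant set of representatives of $\mathcal{L}_{\mathrm{in}}\left(A_{A}\right)$ and that
\[
\mathcal{L}_{\mathrm{in}}\left(A_{A}\right)=\left\{\left(1+rx\right)N_{s,t}\mid r\in R,\ s\in S,\ 0\le t\le N-1\right\}.
\]
Then I would rewrite the index condition ``$s\in S$'' as ``$0\le s\le N-1$'' using the identification $S=\left\{0,1,\dots,N-1\right\}$ from Lemma~\ref{lem:formuTaft}, which produces verbatim the two assertions in the statement: the irredundant system of representatives and the explicit description of all indecomposable right ideals. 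No further work (seriality, Krull--Schmidt, dimension count) is needed, since all of that was already carried out for general $\Bbbk\left(\omega,N\right)$ in the proof of Theorem~\ref{thm:indecomposableGold}.

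\textbf{Expected obstacle.} There is essentially no genuine obstacle: the result is a direct corollary once the Taft algebra has been recognised as an instance of $\Bbbk\left(\omega,N\right)$, which is the content of Lemma~\ref{lem:formuTaft}. The only point requiring (routine) care is the bookkeeping of conventions---checking that the generators $e_{s}$, the element $x$, and the subalgebra $R$ match across the two presentations, and that the notation $N_{s,t}=e_{s}J^{t}$ borrowed from \cite{CGL} coincides with $e_{s}x^{t}R$---so that the index set $S$ in the general theorem really is $\left\{0,\dots,N-1\right\}$ in the present case.
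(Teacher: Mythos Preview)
Your proposal is correct and is exactly the argument the paper gives: invoke Lemma~\ref{lem:formuTaft} to identify the Taft algebra with $\Bbbk(\omega,N)$ for $S=\{0,\dots,N-1\}$ and $\omega(s)=s+1$ modulo $N$, and then apply Theorem~\ref{thm:indecomposableGold} verbatim. The paper's proof is just those two sentences, so your additional bookkeeping remarks (matching $e_s$, $R$, and $N_{s,t}=e_sJ^t=e_sx^tR$) are more detailed than what the authors wrote but entirely in the same spirit.
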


\begin{proof}
By Lemma \ref{lem:formuTaft} $A$ is of the form $\Bbbk\left(\omega,N\right)$
as in Section \ref{sec:Indec-ideals}. Thus the statement follows
by Theorem \ref{thm:indecomposableGold}.
\end{proof}
\begin{thm}
\label{thm:orthogonal}Let $a=a\left(x\right)\in R$ be an invertible
element (we can assume $a\left(0\right)=1)$, $s,m$ integers such
that $0\le s,m\le N-1$; then we have
\begin{align*}
N_{s,0}^{\perp} & =\bigoplus_{t\not\equiv_{N}1-s}N_{t,0},\qquad N_{s,m}^{\perp}=N_{s,0}^{\perp}\oplus N_{1-s,N-m}\quad\text{and}\\
 & \left(aN_{s,m}\right)^{\perp}=S\left(a^{-1}\right)N_{s,m}^{\perp}.
\end{align*}
\end{thm}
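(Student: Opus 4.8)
The plan is to deduce the theorem from the general results on $\Bbbk(\omega,N)$. By Lemma \ref{lem:formuTaft} the Taft algebra is of the form $\Bbbk(\omega,N)$ with $\omega(s)=s+1$, and by Lemma \ref{lem:scalar-1} its Hopf-theoretic form \eqref{eq:formHopf} coincides with the canonical monomial form \eqref{eq:formcan} for $\mu(t)=[1-t]_{N}$ and $\nu(n)=N-1-m$ (here $\nu(n)=N-1-n$). The first thing to record is that here $\mu^{2}=\mathrm{Id}$ and $\nu^{2}=\mathrm{Id}$, so the identity $N_{s,m}^{\perp_{R}}=N_{\mu^{2}(s),m}^{\perp_{L}}$ of Theorem \ref{thm:orthog-indec} reads $N_{s,m}^{\perp_{R}}=N_{s,m}^{\perp_{L}}$; thus the left and right orthogonals of each $N_{s,m}$ agree and the symbol $N_{s,m}^{\perp}$ is unambiguous.

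Granting this, the first two displayed equalities are exactly Theorem \ref{thm:orthog-indec} with the above $\mu,\nu$ substituted: $N_{s,0}^{\perp}=\bigoplus_{t\neq\mu(s)}N_{t,0}=\bigoplus_{t\not\equiv_{N}1-s}N_{t,0}$, and, since the hypothesis $\nu(m)=N-1-m$ of the second part of that theorem holds, $N_{s,m}^{\perp}=N_{s,0}^{\perp}\oplus N_{\mu(s),N-m}=N_{s,0}^{\perp}\oplus N_{1-s,N-m}$ (with the convention $N_{\cdot,N}=0$, which makes the two formulas consistent at $m=0$).

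For the third equality, observe that $a=a(x)\in R$ with $a(0)\neq0$ is invertible in $R$, hence a unit of $A$ with inverse in $R$. By the consequence of Proposition \ref{prop:action} recorded immediately after Lemma \ref{lem:scalarcan}, $(aN_{s,m})^{\perp_{L}}=N_{s,m}^{\perp_{L}}\vartriangleleft a^{-1}$ and $(aN_{s,m})^{\perp_{R}}=N_{s,m}^{\perp_{R}}\blacktriangleleft a^{-1}$. By Lemma \ref{lem:lambdaS} the right actions $\vartriangleleft$ and $\blacktriangleleft$ on $A$ coincide and $w\vartriangleleft r=S(r)w$ for $r\in R$; taking $r=a^{-1}$ and letting $w$ run over $N_{s,m}^{\perp}$ (recall $N_{s,m}^{\perp_{L}}=N_{s,m}^{\perp_{R}}$) gives $(aN_{s,m})^{\perp_{L}}=(aN_{s,m})^{\perp_{R}}=S(a^{-1})N_{s,m}^{\perp}$, which is the asserted formula. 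Normalizing $a(0)=1$ is harmless, since it only rescales $a$ by a nonzero scalar, changing neither $aN_{s,m}$ nor $S(a^{-1})N_{s,m}^{\perp}$.

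I do not foresee a genuine obstacle, since the argument merely specializes structural results already at hand. The one point deserving a line of justification is that $w\vartriangleleft r=S(r)w$ is applied for every $w\in A$, whereas Lemma \ref{lem:lambdaS} as phrased concerns $w$ in the group-algebra part only: this extension is immediate, because the proof of that lemma establishes $\lambda(rh)=\lambda(hS^{2}(r))$ for all $h\in A$ and $r\in R$, whence $\eta|_{R}=S^{2}|_{R}$ for the Nakayama automorphism $\eta$ of the Frobenius form, and then Lemma \ref{lem:leftriang} gives $w\vartriangleleft r=S^{-1}(\eta(r))w=S(r)w$ for all $w\in A$.
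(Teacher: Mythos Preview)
Your proof is correct and follows essentially the same route as the paper's own argument: specialize Theorem \ref{thm:orthog-indec} with $\mu(t)=[1-t]_{N}$, $\nu(m)=N-1-m$ (so $\mu^{2}=\nu^{2}=\mathrm{Id}$) to obtain $N_{s,m}^{\perp_{L}}=N_{s,m}^{\perp_{R}}$ and the first two formulas, then combine \eqref{eq:orthNleft}, \eqref{eq:orthNright} with Lemma \ref{lem:lambdaS} to get $(aN_{s,m})^{\perp}=S(a^{-1})N_{s,m}^{\perp}$. Your extra paragraph on extending $w\vartriangleleft r=S(r)w$ from $H=\Bbbk G$ to all of $A$ is a fair point about the paper's notation clash, and your justification via $\eta|_{R}=S^{2}|_{R}$ and Lemma \ref{lem:leftriang} is exactly what the proof of Lemma \ref{lem:lambdaS} actually establishes.
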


\begin{proof}
By Lemma \ref{lem:lambdaS} and \eqref{eq:orthNleft},\eqref{eq:orthNright},
we have that $\left(aN_{s,m}\right)^{\perp_{L}}=S\left(a^{-1}\right)N_{s,m}^{\perp_{L}}$
and $\left(aN_{s,m}\right)^{\perp_{R}}=S\left(a^{-1}\right)N_{s,m}^{\perp_{R}}$
for every $s,m$. Note that $\mu\left(s\right)=1-s$ modulo $N$ (so
that $\mu^{2}=\mathrm{Id}$) and $\nu\left(m\right)=N-1-m$ in our
case. Thus by Theorem \ref{thm:orthog-indec}, we get $N_{s,m}^{\perp_{R}}=N_{s,m}^{\perp_{L}}$
for every $s,m$ (hence we can use the notation $\perp$) and the
equalities in the present statement holds.
\end{proof}

\subsection{Another example\label{subsec:Another-example}}

Consider the commutative Hopf algebra $H=\widehat{D}_{n}$ of \cite[Section 4]{CDMM}
for $n=6$, where $D_{n}$ denotes the dihedral group of order $2n$
(note that in \cite{CDMM} it is denoted by $D_{2n}$). Assume that
$\Bbbk$ contains $\zeta$ a primitive $6$-th root of $1$ and observe
that in this case $H\cong\left(\Bbbk D_{n}\right)^{*}\cong\Bbbk^{D_{n}}$
so that $H$ is both semisimple and cosemisimple. Recall that $H$
is given by the generators $a,b$ and relations $a^{2}=1=b^{6}$ and
$ab=ba$. The element $a$ is group-like while
\[
\Delta\left(b\right)=b\otimes e_{0}b+b^{-1}\otimes e_{1}b,\qquad\varepsilon\left(b\right)=1,\qquad S\left(b\right)=e_{0}b^{-1}+e_{1}b,
\]
where
\[
e_{0}=\frac{1}{2}\left(1+a\right)\qquad\text{and}\qquad e_{1}=\frac{1}{2}\left(1-a\right).
\]
We also set
\[
f_{j}=\frac{1}{6}\sum_{i=0}^{5}\zeta^{ji}b^{i}\qquad\text{and}\qquad e_{i,j}:=e_{i}f_{j}.
\]
Since the subalgebra $\Bbbk\left\langle b\right\rangle $ of $H$
generated by $b$ is a group algebra (note it is not a subbialgebra
since is not group-like), as in Subsection \eqref{subsec:Cyclic-group-algebra},
we get that the $f_{j}$'s form a complete set of orthogonal idempotents
in $\Bbbk\left\langle b\right\rangle $. As a consequence, since $H$
is commutative, the $e_{i,j}$'s form a complete set of orthogonal
idempotents in $H$. It is clear that $\left\{ e_{i,j}\mid0\leq i\leq1,0\leq j\leq5\right\} $
is a generating set whence a basis for $H$ over $\Bbbk$.

In this section, consider as $A$ the Hopf algebra $R\#H$ in \cite[Theorem 4.1]{CDMM},
where $R:=R_{q}\left(H,g,\chi\right)$ where $g:=b^{3}$ (note that
$g$ is group-like) and $\chi:H\to\Bbbk$ is defined by $\chi\left(a\right)=1$
and $\chi\left(b\right)=-1$. Note that $q=\chi\left(g\right)=\chi\left(b\right)^{3}=-1$
so that $N=o\left(q\right)=2$.

Following \cite[Section 2]{CDMM}, we get that $A$ is given by the
generators $x,a,b$ such that $H$ is a Hopf subalgebra of $A$ and
with the further relations $x^{2}=0,hx=x\sum\chi\left(h_{1}\right)h_{2}$
for every $h\in H$. Taking $h=a,b$ we get $ax=xa$ and $bx=-xb$
respectively. Moreover
\[
\Delta\left(x\right)=g\otimes x+x\otimes1=b^{3}\otimes x+x\otimes1,\qquad\varepsilon\left(x\right)=0,\qquad S\left(x\right)=-gx=xg.
\]
By the foregoing $\left\{ x^{m}e_{i,j}\mid0\leq m,i\leq1,0\leq j\leq5\right\} $
is a basis for $A$ over $\Bbbk$.

\begin{lem}
\label{lem:formCDMM}The following equality holds for every $m\in\mathbb{N},i,j\in\mathbb{Z}$
\begin{align}
e_{i,j}x^{m} & =x^{m}e_{i,j+3m},\qquad e_{i,j}g^{m}=g^{m}e_{i,j}=\left(-1\right)^{jm}e_{i,j},\label{eq:OtherId1}\\
e_{i,j}a^{m} & =a^{m}e_{i,j}=\left(-1\right)^{im}e_{i,j},\qquad e_{i,j}b^{m}=b^{m}e_{i,j}=\zeta^{-jm}e_{i,j},\label{eq:OtherId2}\\
S\left(e_{i,j}\right) & =\left(e_{0}f_{-j}+e_{1}f_{j}\right)e_{i}=e_{i,\left(-1\right)^{i+1}j},\nonumber \\
S\left(x^{m}\right) & =x^{m}g^{m},\nonumber \\
S^{2}\left(x^{m}\right) & =\left(-1\right)^{m}x^{m}.\nonumber
\end{align}

As a consequence $A=\Bbbk\left(\omega,N\right)$ as in Section \ref{sec:Indec-ideals}
where the permutation $\omega$ of the indexes of the $e_{i,j}$'s
is defined by $\omega\left(\left(i,j\right)\right):=\left(i,\left[j+3\right]_{6}\right)$
where $\left[t\right]_{6}$ denotes the remainder modulo $6$ of $t$.
\end{lem}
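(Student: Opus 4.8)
The plan is to verify the displayed identities one at a time by direct computation from the defining relations of $A$, and then to read off the presentation, following the pattern already used for the Taft algebra in the proof of Lemma \ref{lem:formuTaft}. Throughout I would exploit two elementary facts: $e_0,e_1$ are orthogonal idempotents of $H$ with $e_0+e_1=1$ which commute with $b$ (as $H$ is commutative) and with $x$ (as $ax=xa$); and $\zeta^{3}=-1$, since $\zeta$ is a primitive $6$-th root of unity.

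\emph{Commutation relations.} First I would record $bf_{j}=\zeta^{-j}f_{j}$, immediate from $f_{j}=\tfrac16\sum_{i}\zeta^{ji}b^{i}$ by reindexing, whence $e_{i,j}b^{m}=e_i(b^{m}f_j)=\zeta^{-jm}e_{i,j}$; specialising $b^{3m}=g^{m}$ and using $\zeta^{3}=-1$ gives $e_{i,j}g^{m}=(-1)^{jm}e_{i,j}$, while $ae_{0}=e_{0}$ and $ae_{1}=-e_{1}$ give $e_{i,j}a^{m}=(-1)^{im}e_{i,j}$; commutativity of $H$ moves $a^m,g^m$ to either side. For the action of $x$, from $bx=-xb=\zeta^{3}xb$ one gets $b^{i}x=\zeta^{3i}xb^{i}$, hence $f_{j}x=\tfrac16\sum_i\zeta^{ji}\zeta^{3i}xb^{i}=xf_{j+3}$; iterating, $f_jx^{m}=x^{m}f_{j+3m}$, and therefore $e_{i,j}x^{m}=e_if_jx^m=x^{m}e_if_{j+3m}=x^{m}e_{i,j+3m}$ (indices of $f$ read modulo $6$).

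\emph{Antipode.} Since $S(a)=a^{-1}=a$ we have $S(e_i)=e_i$. As $S$ is an anti-algebra map and $e_0,e_1$ are orthogonal idempotents commuting with $b$, one gets $S(b^{i})=S(b)^{i}=(e_{0}b^{-1}+e_{1}b)^{i}=e_{0}b^{-i}+e_{1}b^{i}$; substituting into $f_j$ and reindexing yields $S(f_{j})=e_{0}f_{-j}+e_{1}f_{j}$, so $S(e_{i,j})=S(f_j)S(e_i)=(e_{0}f_{-j}+e_{1}f_{j})e_i$, which collapses by orthogonality to $e_{0,-j}$ if $i=0$ and to $e_{1,j}$ if $i=1$, i.e. to $e_{i,(-1)^{i+1}j}$. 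For powers of $x$: $S(x^{m})=S(x)^{m}=(xg)^{m}$, and $x^{2}=0$ forces this to be $x^{m}g^{m}$ (both sides vanish for $m\ge2$, and agree for $m=0,1$). Finally $S^{2}(x)=S(xg)=S(g)S(x)=g^{-1}xg$; using $g^{-1}=b^{-3}=b^{3}=g$, $g^{2}=1$ and $gx=-xg$ (the case $i=3$ of $b^{i}x=\zeta^{3i}xb^i$), this is $-x$, so $S^{2}(x^{m})=S^{2}(x)^{m}=(-1)^{m}x^{m}$.

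\emph{Basis and presentation.} The set $\{x^{m}e_{i,j}\mid 0\le m,i\le1,\ 0\le j\le5\}$ has $24=\dim_{\Bbbk}A$ elements and spans $A$, since $x^{2}=0$ gives $A=H+xH$ and the $e_{i,j}$ span $H$ (indeed $a=\sum_{i,j}(-1)^{i}e_{i,j}$ and $b=\sum_{i,j}\zeta^{-j}e_{i,j}$), so it is a basis. Taking the index set $\{(i,j)\mid 0\le i\le1,\ 0\le j\le5\}$ of cardinality $12\ge 2=N$ and $\omega(i,j)=(i,[j+3]_{6})$, the identities above say the $e_{i,j}$ are orthogonal idempotents summing to $1$, that $x^{2}=0$, and that $e_{i,j}x=xe_{\omega(i,j)}$; thus the relations \eqref{eq:relA} hold in $A$, giving an algebra map $\Bbbk(\omega,2)\to A$ fixing $x$ and the $e_{i,j}$. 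It is surjective (the $e_{i,j}$ and $x$ generate $A$) and sends the basis $\{x^{m}e_{(i,j)}\}$ of $\Bbbk(\omega,2)$ onto the basis displayed above, hence is an isomorphism. I expect the only delicate points to be the bookkeeping of the sixth root of unity in the $f_j$--$x$ and antipode computations (notably $S(f_j)=e_0f_{-j}+e_1f_j$ and the collapse of $S(e_{i,j})$) and getting the dimension count right so that surjectivity upgrades to an isomorphism; the rest is routine manipulation of the defining relations.
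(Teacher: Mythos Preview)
Your proof is correct and follows essentially the same route as the paper: direct computation of each identity from the defining relations, with the same key steps (the reindexing giving $f_{j}x^{m}=x^{m}f_{j+3m}$, the formula $S(f_{j})=e_{0}f_{-j}+e_{1}f_{j}$ via $S(b)^{i}=e_{0}b^{-i}+e_{1}b^{i}$, and the reduction of $S(x^{m})$ and $S^{2}(x^{m})$ using $x^{2}=0$). The only cosmetic difference is that the paper obtains $S(x^{m})$ by observing that $g,x$ generate a Taft subalgebra and quoting \eqref{eq:antipode}, whereas you compute $(xg)^{m}$ directly; and you spell out the dimension count and the surjection $\Bbbk(\omega,2)\to A$ that the paper leaves implicit in the ``As a consequence'' clause.
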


\begin{proof}
Since $e_{i,j}:=e_{i}f_{j}$ the equalities involving $a,b$ follows
analogously to \eqref{eq:TaftId}.

We compute
\[
f_{j}x^{m}=\frac{1}{6}\sum_{i=0}^{5}\zeta^{ji}b^{i}x^{m}=x^{m}\frac{1}{6}\sum_{i=0}^{5}\zeta^{ji}\left(-1\right)^{im}b^{i}=x^{m}\frac{1}{6}\sum_{i=0}^{5}\zeta^{\left(j+3m\right)i}b^{i}=x^{m}f_{j+3m}
\]
 and hence

\[
e_{i,j}x^{m}=e_{i}f_{j}x^{m}=e_{i}x^{m}f_{j+3m}=x^{m}e_{i}f_{j+3m}=x^{m}e_{i,j+3m}.
\]
Moreover
\begin{align*}
e_{i,j}g^{m} & =e_{i}f_{j}b^{3m}=e_{i}\frac{1}{6}\sum_{i=0}^{5}\zeta^{ji}b^{i+3m}\\
 & =\zeta^{-3jm}e_{i}\frac{1}{6}\sum_{i=0}^{5}\zeta^{j\left(i+3m\right)}b^{i+3m}=\zeta^{-3jm}e_{i,j}=\left(-1\right)^{jm}e_{i,j}.
\end{align*}
We have
\begin{align*}
S\left(f_{j}\right) & =\frac{1}{6}\sum_{i=0}^{5}\zeta^{ji}S\left(b\right)^{i}=\frac{1}{6}\sum_{i=0}^{5}\zeta^{ji}\left(e_{0}b^{-i}+e_{1}b^{i}\right)\\
 & =e_{0}\frac{1}{6}\sum_{i=0}^{5}\zeta^{ji}b^{-i}+e_{1}\frac{1}{6}\sum_{i=0}^{5}\zeta^{ji}b^{i}=e_{0}f_{-j}+e_{1}f_{j}.
\end{align*}
so that $S\left(e_{i,j}\right)=\left(e_{0}f_{-j}+e_{1}f_{j}\right)e_{i}=e_{i,\left(-1\right)^{i+1}j}.$
Note that $gx=b^{3}x=-xb^{3}=qxg$ so that the subalgebra of $A$
generated by $g$ and $x$ is a Taft algebra. As a consequence $S\left(x^{m}\right)$
is as in \eqref{eq:antipode} i.e.
\begin{align*}
S\left(x^{m}\right) & =\left(-1\right)^{m}q^{\frac{-m\left(m+1\right)}{2}}x^{m}g^{-m}=\left(-1\right)^{m}\left(-1\right)^{\frac{-m\left(m+1\right)}{2}}x^{m}g^{m}\\
 & =\left(-1\right)^{\frac{m\left(m-1\right)}{2}}x^{m}g^{m}=x^{m}g^{m}
\end{align*}
as $0\leq m\leq1$. We also have
\[
S^{2}\left(x^{m}\right)=S\left(x^{m}g^{m}\right)=S\left(g^{m}\right)S\left(x^{m}\right)=g^{-m}x^{m}g^{m}=\left(-1\right)^{m}x^{m}
\]
where the last equality holds since $0\leq m\leq1$ again.
\end{proof}
\begin{lem}
We have that $\int_{l}\left(A^{*}\right)=\Bbbk\lambda$ where
\begin{align*}
\lambda\left(x^{m}a^{i}b^{j}\right) & =\delta_{m,1}\delta_{i,0}\delta_{j,3}^{\equiv_{6}}\text{ for all }m\in\mathbb{N},i,j\in\mathbb{Z},\\
\lambda\left(x^{m}e_{i,j}\right) & =\frac{\left(-1\right)^{j}}{12}\delta_{m,1}\text{ for all }m\in\mathbb{N},i,j\in\mathbb{Z}.
\end{align*}
\end{lem}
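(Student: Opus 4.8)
The plan is to follow the pattern already used for the Taft algebra. Since $A$ is a finite-dimensional Hopf algebra, the space $\int_{l}(A^{*})$ is one-dimensional by \cite[Corollary 5.1.6 and Theorem 5.1.3]{Sweedler-Hopf} (as recalled at the start of Section~\ref{sec:rat}); hence it suffices to exhibit one non-zero left integral and to identify its values. Since $\{x^{m}e_{i,j}\mid 0\le m,i\le1,\ 0\le j\le5\}$ is a basis of $A$ and the $e_{i,j}$ are obtained from the monomials $a^{i}b^{j}$ by an invertible $\Bbbk$-linear change of variables inside $H$, the set $\{x^{m}a^{i}b^{j}\mid 0\le m,i\le1,\ 0\le j\le5\}$ is a basis of $A$ as well. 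Thus the first displayed formula defines a functional $\lambda\in A^{*}$ unambiguously; it is non-zero because $\lambda(xb^{3})=1$, and its stated extension to all $m\in\mathbb{N}$, $i,j\in\mathbb{Z}$ is immediate from $x^{2}=0$, $a^{2}=1$ and $b^{6}=1$.

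The heart of the argument is to verify that $\lambda$ is a left integral, that is, $\sum h_{1}\lambda(h_{2})=1_{A}\lambda(h)$ for every $h\in A$; by linearity it is enough to test this on the basis $\{x^{m}a^{i}b^{j}\}$. I would first record the auxiliary identity $\Delta(b^{j})=b^{j}\otimes e_{0}b^{j}+b^{-j}\otimes e_{1}b^{j}$ for every $j$, which follows by raising $\Delta(b)=b\otimes e_{0}b+b^{-1}\otimes e_{1}b$ to the $j$-th power (all mixed terms vanish since $e_{0}e_{1}=0$ and $b$ commutes with $e_{0},e_{1}$). Combining this with $\Delta(x)=b^{3}\otimes x+x\otimes1$, $\Delta(a)=a\otimes a$, the algebra-map property of $\Delta$ and the relations $ax=xa$, $bx=-xb$, one writes out $\Delta(x^{m}a^{i}b^{j})$. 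The decisive simplification is that $\lambda(x^{n}a^{p}b^{k})\neq0$ forces $n=1$, $p$ even and $k\equiv3\pmod6$; hence, after applying $\mathrm{id}\otimes\lambda$, almost all summands drop out. For $m=0$ both sides vanish; for $m=1$ the identity reduces to $b^{3+j}+b^{3-j}=2\cdot1_{A}$ (respectively $a(b^{3+j}-b^{3-j})=0$) when $j\equiv3\pmod6$, and to $0=0$ otherwise, all of which are clear from $b^{6}=1$. This finite computation, entirely analogous to the one done for the Taft algebra, is routine but is the bulk of the work and the step I expect to be the main obstacle.

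With $\int_{l}(A^{*})=\Bbbk\lambda$ thus established in the first form, the second formula follows by a direct substitution: writing $e_{i,j}=e_{i}f_{j}=\frac{1}{2}\bigl(1+(-1)^{i}a\bigr)\cdot\frac{1}{6}\sum_{k=0}^{5}\zeta^{jk}b^{k}$ and using $\lambda(x^{m}ab^{k})=0$, we obtain
\[
\lambda(x^{m}e_{i,j})=\frac{1}{12}\sum_{k=0}^{5}\zeta^{jk}\,\lambda(x^{m}b^{k})=\frac{1}{12}\sum_{k=0}^{5}\zeta^{jk}\,\delta_{m,1}\,\delta_{k,3}^{\equiv_{6}}=\frac{\zeta^{3j}}{12}\,\delta_{m,1}.
\]
Since $\zeta$ is a primitive $6$-th root of unity, $\zeta^{3}=-1$, so this equals $\frac{(-1)^{j}}{12}\delta_{m,1}$, which is the asserted value.
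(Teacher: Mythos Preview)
Your proposal is correct and follows essentially the same route as the paper: you verify directly that the functional $\lambda$ defined on the basis $\{x^{m}a^{i}b^{j}\}$ satisfies the left-integral identity $\sum h_{1}\lambda(h_{2})=1_{A}\lambda(h)$ by expanding $\Delta(x^{m}a^{i}b^{j})$ and applying $\mathrm{id}\otimes\lambda$, invoke the one-dimensionality of $\int_{l}(A^{*})$, and then obtain $\lambda(x^{m}e_{i,j})$ by substituting $e_{i,j}=e_{i}f_{j}$. The paper carries out the same computation, organizing the expansion of $\Delta(x^{m}a^{i}b^{j})$ via the $q$-binomial formula (which for $m\le1$ is of course the same as your case split $m=0,1$), and arrives at the identical conclusion.
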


\begin{proof}
We compute

\begin{align*}
 & \Delta\left(x^{m}a^{i}b^{j}\right)=\Delta\left(x\right)^{m}\Delta\left(a\right)^{i}\Delta\left(b\right)^{j}\\
 & =\left(\sum_{t=0}^{m}\binom{m}{t}_{-1}\left(x\otimes1\right)^{m-t}\left(b^{3}\otimes x\right)^{t}\right)\left(a^{i}\otimes a^{i}\right)\left(b^{j}\otimes e_{0}b^{j}+b^{-j}\otimes e_{1}b^{j}\right)\\
 & =\sum_{t=0}^{m}\binom{m}{t}_{-1}\left(x^{m-t}a^{i}b^{3t+j}\otimes x^{t}a^{i}e_{0}b^{j}\right)+\sum_{t=0}^{m}\binom{m}{t}_{-1}\left(x^{m-t}a^{i}b^{3t-j}\otimes x^{t}a^{i}e_{1}b^{j}\right)\\
 & =\sum_{t=0}^{m}\binom{m}{t}_{-1}\left(x^{m-t}a^{i}b^{3t+j}\otimes x^{t}e_{0}b^{j}\right)+\left(-1\right)^{i}\sum_{t=0}^{m}\binom{m}{t}_{-1}\left(x^{m-t}a^{i}b^{3t-j}\otimes x^{t}e_{1}b^{j}\right)
\end{align*}

Let us check that $\lambda$ as in the statement is a left integral
in $A^{*}$. For $0\leq m,i\leq1$ and $0\leq j\leq5$, we have
\begin{align*}
\sum\left(x^{m}a^{i}b^{j}\right)_{1}\lambda\left(\left(x^{m}a^{i}b^{j}\right)_{2}\right) & =\sum_{t=0}^{m}\binom{m}{t}_{-1}x^{m-t}a^{i}b^{3t+j}\lambda\left(x^{t}e_{0}b^{j}\right)\\
 & \quad+\left(-1\right)^{i}\sum_{t=0}^{m}\binom{m}{t}_{-1}x^{m-t}a^{i}b^{3t-j}\lambda\left(x^{t}e_{1}b^{j}\right)\\
 & =\sum_{t=0}^{m}\binom{m}{t}_{-1}x^{m-t}a^{i}b^{3t+j}\frac{1}{2}\delta_{t,1}\delta_{j,3}\\
 & \quad+\left(-1\right)^{i}\sum_{t=0}^{m}\binom{m}{t}_{-1}x^{m-t}a^{i}b^{3t-j}\frac{1}{2}\delta_{t,1}\delta_{j,3}\\
 & =\frac{1}{2}\binom{m}{1}_{-1}x^{m-1}a^{i}\delta_{j,3}+\frac{1}{2}\left(-1\right)^{i}\binom{m}{1}_{-1}x^{m-1}a^{i}\delta_{j,3}\\
 & =\frac{1+\left(-1\right)^{i}}{2}\left(m\right)_{-1}x^{m-1}a^{i}\delta_{j,3}\\
 & =\frac{1+\left(-1\right)^{i}}{2}\frac{1-\left(-1\right)^{m}}{2}x^{m-1}a^{i}\delta_{j,3}\\
 & \overset{i,m\leq1}{=}\delta_{i,0}\delta_{m,1}x^{m-1}a^{i}\delta_{j,3}=\delta_{i,0}\delta_{m,1}\delta_{j,3}=\lambda\left(x^{m}a^{i}b^{j}\right).
\end{align*}
Hence $\lambda$ is a left integral in $A^{*}$. We have observed
that $\int_{l}\left(A^{*}\right)$ is one-dimensional so that $\int_{l}\left(A^{*}\right)=\Bbbk\lambda$
. We compute
\begin{align*}
\lambda\left(x^{m}e_{i,j}\right) & =\lambda\left(x^{m}e_{i}f_{j}\right)=\sum_{s=0}^{1}\sum_{t=0}^{5}\frac{\left(-1\right)^{s}}{12}\zeta^{jt}\lambda\left(x^{m}a^{s}b^{t}\right)\\
 & =\sum_{s=0}^{1}\sum_{t=0}^{5}\frac{\left(-1\right)^{s}}{12}\zeta^{jt}\delta_{m,1}\delta_{s,0}\delta_{t,3}=\frac{1}{12}\zeta^{3j}\delta_{m,1}=\frac{\left(-1\right)^{j}}{12}\delta_{m,1}.
\end{align*}
\end{proof}
\begin{lem}
\label{lem:scalar-2}We have
\[
\left\langle x^{m}e_{i,j},x^{n}e_{u,v}\right\rangle =\frac{\left(-1\right)^{j\left(m+1\right)}}{12}\delta_{u,i}^{\equiv_{2}}\delta_{v,\left(-1\right)^{i+1}j}^{\equiv_{6}}\delta_{m+n,1},
\]

\[
\left\langle e_{i,j}x^{m},e_{u,v}x^{n}\right\rangle =\frac{\left(-1\right)^{j\left(m+1\right)}}{12}\delta_{u,i}^{\equiv_{2}}\delta_{v,\left(-1\right)^{i+1}j-3}^{\equiv_{6}}\delta_{m+n,1}.
\]

In particular the form $\left\langle -,-\right\rangle $ is as in
\ref{eq:formcan}, where
\[
\mu\left(\left(i,j\right)\right):=\left(i,\left[\left(-1\right)^{i+1}j-3\right]_{6}\right),\quad\nu\left(m\right):=1-m\quad\text{and}\quad d_{\left(i,j,m\right)}:=\frac{\left(-1\right)^{j\left(m+1\right)}}{12}.
\]
 As a consequence the form $\left\langle -,-\right\rangle $ is monomial
with respect to the basis $\mathcal{B}=\left\{ e_{i,j}x^{m}\mid0\leq s,m\leq N-1\right\} $.
Moreover the Nakayama isomorphism $\gamma$ is given by $\gamma\left(h\right)=hg$,
for every $h\in A$.
\end{lem}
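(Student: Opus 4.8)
The plan is to mimic the proof of Lemma \ref{lem:scalar-1}, now using the explicit formulas of Lemma \ref{lem:formCDMM} and the expression for $\lambda$ obtained above. Since $\left\langle x,y\right\rangle =\lambda\left[yS\left(x\right)\right]$ by \eqref{eq:formHopf}, I would first rewrite, using $S\left(x^{m}e_{i,j}\right)=S\left(e_{i,j}\right)S\left(x^{m}\right)$ and Lemma \ref{lem:formCDMM},
\[
x^{n}e_{u,v}S\left(x^{m}e_{i,j}\right)=x^{n}e_{u,v}\,e_{i,\left(-1\right)^{i+1}j}\,x^{m}g^{m},
\]
then use orthogonality of the idempotents to replace $e_{u,v}e_{i,\left(-1\right)^{i+1}j}$ by $\delta_{u,i}^{\equiv_{2}}\delta_{v,\left(-1\right)^{i+1}j}^{\equiv_{6}}e_{u,v}$, push $e_{u,v}$ through $x^{m}$ via $e_{u,v}x^{m}=x^{m}e_{u,v+3m}$, and extract the scalar $\left(-1\right)^{\left(v+3m\right)m}$ from $e_{u,v+3m}g^{m}$. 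Applying $\lambda\left(x^{m+n}e_{u,v+3m}\right)=\tfrac{1}{12}\left(-1\right)^{v+3m}\delta_{m+n,1}$ then produces $\left\langle x^{m}e_{i,j},x^{n}e_{u,v}\right\rangle$ up to the claimed Kronecker factors and a sign $\left(-1\right)^{\left(v+3m\right)\left(m+1\right)}$.

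Next I would simplify this sign. On the support of the two deltas one has $v\equiv\left(-1\right)^{i+1}j\pmod 6$, hence $v\equiv j\pmod 2$ because $\left(-1\right)^{i+1}$ is odd; together with the fact that $m\left(m+1\right)$ is even this gives $\left(-1\right)^{\left(v+3m\right)\left(m+1\right)}=\left(-1\right)^{j\left(m+1\right)}$, which is the first asserted formula. The second formula I would deduce from the first using $e_{i,j}x^{m}=x^{m}e_{i,j+3m}$: substituting $\left(i,j\right)\mapsto\left(i,j+3m\right)$, $\left(u,v\right)\mapsto\left(u,v+3n\right)$ and then using $m+n=1$ (so $3n\equiv3-3m\pmod 6$) and $\left(-1\right)^{i+1}\cdot3m\equiv3m\pmod 6$, the index condition becomes $v\equiv\left(-1\right)^{i+1}j-3\pmod 6$, while the sign is again $\left(-1\right)^{j\left(m+1\right)}$.

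Comparing with \eqref{eq:formcan} over the index set $\left\{0,1\right\}\times\mathbb{Z}/6\mathbb{Z}$ for the idempotents then reads off $\mu\left(\left(i,j\right)\right)=\left(i,\left[\left(-1\right)^{i+1}j-3\right]_{6}\right)$, $\nu\left(m\right)=1-m$ and $d_{\left(i,j,m\right)}=\tfrac{1}{12}\left(-1\right)^{j\left(m+1\right)}$; since $j\mapsto\left(-1\right)^{i+1}j-3$ is a bijection of $\mathbb{Z}/6\mathbb{Z}$ for each $i$, both $\mu$ and $\nu$ are permutations and Lemma \ref{lem:scalarcan} applies, giving that the form is monomial. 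For the Nakayama isomorphism, I would check $\mu^{2}=\mathrm{Id}$ (because $3\left(\left(-1\right)^{i+1}+1\right)\equiv0\pmod 6$) and $\nu^{2}=\mathrm{Id}$, so Lemma \ref{lem:scalarcan} yields $\gamma\left(e_{i,j}x^{m}\right)=\tfrac{d_{\left(i,j,m\right)}}{d_{\left(\mu\left(i,j\right),\nu\left(m\right)\right)}}e_{i,j}x^{m}$; reducing the exponent of this ratio modulo $2$ kills the $\left(-1\right)^{i+1}$ and shows it equals $\left(-1\right)^{j+m}$, and since $x^{m}g=\left(-1\right)^{m}gx^{m}$ and $e_{i,j}g=\left(-1\right)^{j}e_{i,j}$ one recognizes $\left(-1\right)^{j+m}e_{i,j}x^{m}=e_{i,j}x^{m}g$, so $\gamma\left(h\right)=hg$ for all $h\in A$.

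The step I expect to be the main obstacle is the sign bookkeeping: one has to reduce the various powers of $-1$ modulo $2$ while the index conditions live modulo $6$, and in particular verify that the ostensible dependence on $i$ through the factor $\left(-1\right)^{i+1}$ washes out both in $d_{\left(i,j,m\right)}$ and in the Nakayama ratio. Everything else is a direct, if somewhat lengthy, manipulation of the relations in Lemma \ref{lem:formCDMM}, exactly parallel to the Taft case.
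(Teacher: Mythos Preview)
Your proposal is correct and follows essentially the same route as the paper's own proof: compute $x^{n}e_{u,v}S(x^{m}e_{i,j})$ via Lemma~\ref{lem:formCDMM}, apply $\lambda$, simplify the sign using $v\equiv j\pmod 2$ on the support together with the parity of $m(m+1)$, pass to the $e_{i,j}x^{m}$ basis by the commutation rule, and then invoke Lemma~\ref{lem:scalarcan} with $\mu^{2}=\nu^{2}=\mathrm{Id}$ to read off $\gamma(h)=hg$. Your explicit remarks on why $\mu$ is a bijection and why the $(-1)^{i+1}$ disappears modulo $2$ are exactly the checks the paper performs (somewhat more tersely).
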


\begin{proof}
We compute
\begin{align*}
x^{n}e_{u,v}S\left(x^{m}e_{i,j}\right) & =x^{n}e_{u,v}S\left(e_{i,j}\right)S\left(x^{m}\right)\\
 & =x^{n}e_{u,v}e_{i,\left(-1\right)^{i+1}j}S\left(x^{m}\right)\\
 & =x^{n}e_{u,v}e_{i,\left(-1\right)^{i+1}j}x^{m}g^{m}\\
 & =\delta_{u,i}^{\equiv_{2}}\delta_{v,\left(-1\right)^{i+1}j}^{\equiv_{6}}x^{n}e_{u,v}x^{m}g^{-m}\\
 & =\delta_{u,i}^{\equiv_{2}}\delta_{v,\left(-1\right)^{i+1}j}^{\equiv_{6}}x^{n}x^{m}e_{u,v+3m}g^{-m}\\
 & =\left(-1\right)^{\left(v+3m\right)m}\delta_{u,i}^{\equiv_{2}}\delta_{v,\left(-1\right)^{i+1}j}^{\equiv_{6}}x^{m+n}e_{u,v+3m}
\end{align*}

Thus we have
\begin{align*}
\left\langle x^{m}e_{i,j},x^{n}e_{u,v}\right\rangle  & =\lambda\left(x^{n}e_{u,v}S\left(x^{m}e_{i,j}\right)\right)\\
 & =\left(-1\right)^{\left(v+3m\right)m}\delta_{u,i}^{\equiv_{2}}\delta_{v,\left(-1\right)^{i+1}j}^{\equiv_{6}}\lambda\left(x^{m+n}e_{u,v+3m}\right)\\
 & =\frac{\left(-1\right)^{v+3m}}{12}\left(-1\right)^{\left(v+3m\right)m}\delta_{u,i}^{\equiv_{2}}\delta_{v,\left(-1\right)^{i+1}j}^{\equiv_{6}}\delta_{m+n,1}\\
 & =\frac{\left(-1\right)^{v\left(m+1\right)}}{12}\delta_{u,i}^{\equiv_{2}}\delta_{v,\left(-1\right)^{i+1}j}^{\equiv_{6}}\delta_{m+n,1}\\
 & =\frac{\left(-1\right)^{j\left(m+1\right)}}{12}\delta_{u,i}^{\equiv_{2}}\delta_{v,\left(-1\right)^{i+1}j}^{\equiv_{6}}\delta_{m+n,1}.
\end{align*}

We also have
\begin{align*}
\left\langle e_{i,j}x^{m},e_{u,v}x^{n}\right\rangle  & =\left\langle x^{m}e_{i,j+3m},x^{n}e_{u,v+3n}\right\rangle \\
 & =\frac{\left(-1\right)^{\left(j+3m\right)\left(m+1\right)}}{12}\delta_{u,i}^{\equiv_{2}}\delta_{v+3n,\left(-1\right)^{i+1}\left(j+3m\right)}^{\equiv_{6}}\delta_{m+n,1}\\
 & =\frac{\left(-1\right)^{j\left(m+1\right)}}{12}\delta_{u,i}^{\equiv_{2}}\delta_{v+3n,\left(-1\right)^{i+1}j+\left(-1\right)^{i+1}3m}^{\equiv_{6}}\delta_{m+n,1}\\
 & =\frac{\left(-1\right)^{j\left(m+1\right)}}{12}\delta_{u,i}^{\equiv_{2}}\delta_{v+3n,\left(-1\right)^{i+1}j-3m}^{\equiv_{6}}\delta_{m+n,1}\\
 & =\frac{\left(-1\right)^{j\left(m+1\right)}}{12}\delta_{u,i}^{\equiv_{2}}\delta_{v,\left(-1\right)^{i+1}j-3}^{\equiv_{6}}\delta_{m+n,1}.
\end{align*}

By the foregoing it is clear that as in \ref{eq:formcan}, where $\mu\left(\left(i,j\right)\right):=\left(i,\left[\left(-1\right)^{i+1}j-3\right]_{6}\right)$,
$\nu\left(m\right):=1-m$ and $d_{\left(i,j,m\right)}:=\frac{\left(-1\right)^{j\left(m+1\right)}}{12}$.

Thus, by Lemma \ref{lem:scalarcan}, the form $\left\langle -,-\right\rangle $
is monomial with respect to the basis $\mathcal{B}=\left\{ e_{i,j}x^{m}\mid0\leq s,m\leq N-1\right\} $.
Moreover the Nakayama isomorphism $\gamma$ is given by $\gamma\left(e_{i,j}x^{m}\right)=\frac{d_{\left(i,j,m\right)}}{d_{\left(\mu\left(\left(i,j\right)\right),\nu\left(m\right)\right)}}e_{\mu^{2}\left(i,j\right)}x^{\nu^{2}\left(m\right)}$
for all $i,j,m$. Since $\mu^{2}=\mathrm{Id}=\nu^{2}$, we obtain
$\gamma\left(e_{i,j}x^{m}\right)=\frac{d_{\left(i,j,m\right)}}{d_{\left(\mu\left(\left(i,j\right)\right),\nu\left(m\right)\right)}}e_{i,j}x^{m}$.
We compute
\[
\frac{d_{\left(i,j,m\right)}}{d_{\left(\mu\left(\left(i,j\right)\right),\nu\left(m\right)\right)}}=\frac{\frac{\left(-1\right)^{j\left(m+1\right)}}{12}}{\frac{\left(-1\right)^{\left(\left(-1\right)^{i+1}j-3\right)\left(1-m+1\right)}}{12}}=\frac{\left(-1\right)^{j\left(m+1\right)+\left(-1\right)^{i+1}jm-3m}}{12}=\left(-1\right)^{j+m}
\]
Thus
\[
\gamma\left(e_{i,j}x^{m}\right)=\frac{d_{\left(i,j,m\right)}}{d_{\left(\mu\left(\left(i,j\right)\right),\nu\left(m\right)\right)}}e_{i,j}x^{m}=\left(-1\right)^{j+m}e_{i,j}x^{m}=\left(-1\right)^{m}e_{i,j}gx^{m}=e_{i,j}x^{m}g
\]
 and hence $\gamma\left(h\right)=hg$ for every $h\in H$.
\end{proof}
\begin{lem}
\label{lem:lambdaS-other}In the setting of Proposition \ref{prop:action},
take $V$ the algebra $A$ as above with left regular action. Then
$\vartriangleleft=\blacktriangleleft$ and, for every $x\in H,r\in R$,
we have $x\vartriangleleft r=S\left(r\right)x.$
\end{lem}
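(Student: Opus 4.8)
The plan is to mimic, almost verbatim, the argument used for the Taft algebra in Lemma \ref{lem:lambdaS}. First I would invoke Lemma \ref{lem:scalar-2}, which gives that the Nakayama isomorphism of $\left\langle -,-\right\rangle $ is $\gamma\left(h\right)=hg$ for every $h\in A$. Since right multiplication by $g$ is left $A$-linear, $\gamma$ is left $A$-linear, and hence by the last assertion of part 1) of Proposition \ref{prop:action} (applied to $V=A$ with the left regular action) we obtain $\vartriangleleft=\blacktriangleleft$.

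For the formula $x\vartriangleleft r=S\left(r\right)x$, I would use Lemma \ref{lem:leftriang}, which tells us $x\vartriangleleft h=S^{-1}\left(\eta\left(h\right)\right)x$ for every $h\in A$, where $\eta$ is the Nakayama automorphism of the Frobenius form $b\left(x,y\right)=\lambda\left(xy\right)$, i.e. the algebra automorphism determined by $\lambda\left(xy\right)=\lambda\left(y\eta\left(x\right)\right)$. Thus it is enough to prove $\eta\left(r\right)=S^{2}\left(r\right)$ for every $r\in R$, equivalently $\lambda\left(rh\right)=\lambda\left(hS^{2}\left(r\right)\right)$ for all $r\in R$ and $h\in A$. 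Because $R=\Bbbk\left[x\mid x^{2}=0\right]$ has basis $\left\{ 1,x\right\} $ and $S^{2}\left(x\right)=-x$ by Lemma \ref{lem:formCDMM}, this reduces to the single identity $\lambda\left(xh\right)=-\lambda\left(hx\right)$ for all $h\in A$, which in turn need only be checked on the basis elements $h=e_{i,j}x^{m}$.

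The verification is then a short bookkeeping computation, entirely parallel to the one in Lemma \ref{lem:lambdaS}: using $e_{i,j}x^{m}=x^{m}e_{i,j+3m}$ from Lemma \ref{lem:formCDMM} together with $\lambda\left(x^{m}e_{i,j}\right)=\frac{\left(-1\right)^{j}}{12}\delta_{m,1}$, one rewrites $x^{u}e_{i,j}x^{m}=x^{u+m}e_{i,j+3m}$ and $e_{i,j}x^{m}S^{2}\left(x^{u}\right)=\left(-1\right)^{u}x^{m+u}e_{i,j+3\left(m+u\right)}$, and both sides evaluate to $\frac{\left(-1\right)^{j+m}}{12}\delta_{m+u,1}$, where one uses $\left(-1\right)^{3k}=\left(-1\right)^{k}$ and $\left(-1\right)^{2u}=1$; the cases $u=0,1$ cover a basis of $R$, so the identity holds for all $r\in R$ by linearity. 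Hence $\eta\left(r\right)=S^{2}\left(r\right)$ on $R$, and Lemma \ref{lem:leftriang} gives $x\vartriangleleft r=S^{-1}\left(\eta\left(r\right)\right)x=S^{-1}\left(S^{2}\left(r\right)\right)x=S\left(r\right)x$. I expect no genuine obstacle here: the whole argument is strictly analogous to Lemma \ref{lem:lambdaS}, the only point demanding a little care being the sign bookkeeping coming from $S^{2}\left(x^{m}\right)=\left(-1\right)^{m}x^{m}$ and the $\zeta$-powers folded into $\left(-1\right)$'s; as an extra sanity check one may also note that $\lambda S=\lambda\gamma$ by \eqref{eq:lambdagamma}, which forces $\lambda S\left(x^{m}e_{i,j}\right)=\lambda\left(x^{m}e_{i,j}g\right)$.
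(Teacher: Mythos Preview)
Your proposal is correct and follows essentially the same argument as the paper: invoke Lemma \ref{lem:scalar-2} to get $\gamma\left(h\right)=hg$, deduce $\vartriangleleft=\blacktriangleleft$ from Proposition \ref{prop:action}, then verify $\lambda\left(rh\right)=\lambda\left(hS^{2}\left(r\right)\right)$ on a basis to conclude via Lemma \ref{lem:leftriang}. The only cosmetic difference is that the paper runs the bookkeeping with the basis elements $h=x^{m}e_{s,t}$ rather than $h=e_{i,j}x^{m}$, which amounts to the same computation up to the reindexing $e_{i,j}x^{m}=x^{m}e_{i,j+3m}$.
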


\begin{proof}
By Lemma \ref{lem:scalar-2}, we have that the map $\gamma$ is given
by $\gamma\left(h\right)=hg$ for every $h\in H$. Thus $\gamma$
is left $A$-linear. By Proposition \ref{prop:action}, we have $\vartriangleleft=\blacktriangleleft$
.
\end{proof}
By \eqref{eq:lambdagamma} we have
\begin{align*}
\lambda S\left(x^{m}e_{s,t}\right) & =\lambda\gamma\left(x^{m}e_{s,t}\right)=\lambda\left(x^{m}e_{s,t}g\right)\\
 & =\left(-1\right)^{t}\lambda\left(x^{m}e_{s,t}\right)=\left(-1\right)^{t}\frac{\left(-1\right)^{t}}{12}\delta_{m,1}=\frac{1}{12}\delta_{m,1}.
\end{align*}

We compute
\begin{align*}
\lambda\left(x^{u}x^{m}e_{s,t}\right) & =\lambda\left(x^{u+m}e_{s,t}\right)=\frac{\left(-1\right)^{t}}{12}\delta_{u+m,1}\\
= & \left(-1\right)^{u}\frac{\left(-1\right)^{t+3u}}{12}\delta_{u+m,1}=\left(-1\right)^{u}\lambda\left(x^{u+m}e_{s,t+3u}\right)\\
= & \left(-1\right)^{u}\lambda\left(x^{m}x^{u}e_{s,t+3u}\right)=\lambda\left(x^{m}e_{s,t}\left(-1\right)^{u}x^{u}\right)=\lambda\left(x^{m}e_{s,t}S^{2}\left(x^{u}\right)\right)
\end{align*}
which implies $\lambda\left(rh\right)=\lambda\left(hS^{2}\left(r\right)\right)$
for every $r\in R,h\in A$. As a consequence, we get $x\vartriangleleft r=S\left(r\right)x$
as in the proof of Lemma \ref{lem:lambdaS}.

We are now able to compute the indecomposable ideals and their orthogonals.
\begin{thm}
\label{thm:indecomposableCDMM}Consider the Hopf algebra
\[
A=\Bbbk\left\langle a,b,x\mid a^{2}=1=b^{6},x^{2}=0,ab=ba,ax=xa,bx=-xb\right\rangle
\]
and let $R$ be the subalgebra of $A$ generated by $x$. For $0\leq m,s\leq1,0\leq t\leq5$
set $N_{s,t,m}:=e_{s,t}x^{m}R$. Then the $N_{s,t,m}$'s form an irredundant
set of representatives of $\mathcal{L}\left(A_{A}\right)$ and
\[
\mathcal{L}_{\mathrm{in}}\left(A_{A}\right)=\left\{ \left(1+rx\right)N_{s,t,m}\mid r\in R,0\leq m,s\leq1,0\leq t\leq5\right\} .
\]
\end{thm}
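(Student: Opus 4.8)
The plan is to deduce the statement directly from Theorem \ref{thm:indecomposableGold}, exactly as in the proof of Theorem \ref{thm:indecomposable} for the Taft algebra. By Lemma \ref{lem:formCDMM}, the algebra $A$ is of the form $\Bbbk\left(\omega,N\right)$ studied in Section \ref{sec:Indec-ideals}: here $N=2$, the indexing set is $S=\left\{0,1\right\}\times\left\{0,\dots,5\right\}$ (of cardinality $12\geq N$), the orthogonal idempotents are the $e_{i,j}$ with $1_{A}=\sum_{i,j}e_{i,j}$, one has $x^{N}=x^{2}=0$, and the commutation relation $e_{i,j}x=xe_{\omega\left(i,j\right)}$ holds for the permutation $\omega\left(\left(i,j\right)\right)=\left(i,\left[j+3\right]_{6}\right)$ arising from $bx=-xb$. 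Moreover the subalgebra $R=\Bbbk\left[x\mid x^{2}=0\right]$ appearing in the statement is precisely the subalgebra generated by $x$ that plays the role of $R$ in Theorem \ref{thm:indecomposableGold}.

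Once this identification is in place, Theorem \ref{thm:indecomposableGold} applies verbatim. Writing $\sigma:=\left(s,t\right)\in S$, the module $N_{s,t,m}:=e_{s,t}x^{m}R$ is exactly the module denoted $N_{\sigma,m}$ in that theorem, so the $N_{s,t,m}$'s form an irredundant set of representatives of $\mathcal{L}_{\mathrm{in}}\left(A_{A}\right)$, and every indecomposable right ideal of $A$ has the form $\left(1+rx\right)N_{s,t,m}$ for some $r\in R$, some $\left(s,t\right)\in S$ and some $0\leq m\leq N-1=1$. This is exactly the asserted description.

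There is essentially no genuine obstacle here, since the substantive content---that $A$ admits the presentation $\Bbbk\left(\omega,N\right)$---is already contained in Lemma \ref{lem:formCDMM}. The only point requiring (routine) care is the bookkeeping: checking that the permutation $\omega$ produced by the relation $bx=-xb$ genuinely matches the $\omega$ in the definition of $\Bbbk\left(\omega,N\right)$, and that the ranges of the indices $s,t,m$ in the present statement correspond to $s\in S$ and $0\leq m\leq N-1$ in Theorem \ref{thm:indecomposableGold}. I would therefore simply invoke Lemma \ref{lem:formCDMM} and Theorem \ref{thm:indecomposableGold} and conclude.
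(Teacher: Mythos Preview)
Your proposal is correct and follows exactly the same approach as the paper: invoke the identification of $A$ with $\Bbbk(\omega,N)$ and then apply Theorem \ref{thm:indecomposableGold}. In fact, the paper's proof cites Lemma \ref{lem:formuTaft} at this point, which appears to be a typo for Lemma \ref{lem:formCDMM}; your citation is the correct one.
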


\begin{proof}
By Lemma \ref{lem:formuTaft} $A$ is of the form $\Bbbk\left(\omega,N\right)$
as in Section \ref{sec:Indec-ideals}. Thus the statement follows
by Theorem \ref{thm:indecomposableGold}.
\end{proof}
\begin{thm}
\label{thm:orthogonal-CDMM}Let $a=a\left(x\right)\in R$ be an invertible
element (we can assume $a\left(0\right)=1)$, $s,t,m$ integers such
that $0\leq m,s\leq1,0\leq t\leq5$; then we have
\begin{align*}
N_{s,t,0}^{\perp} & =\left(\bigoplus_{j}N_{1-s,j,0}\right)\oplus\left(\bigoplus_{\substack{j\not\equiv_{6}\left(-1\right)^{s+1}t+3}
}N_{s,j,0}\right),\\
N_{s,t,m}^{\perp} & =N_{s,t,0}^{\perp}\oplus N_{s,\left(-1\right)^{s+1}t+3,2-m},\\
\left(aN_{s,t,m}\right)^{\perp} & =S\left(a^{-1}\right)N_{s,t,m}^{\perp}.
\end{align*}
\end{thm}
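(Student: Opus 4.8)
\medskip

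\emph{Proof plan.} The idea is to deduce the statement entirely from the machinery of Section \ref{sec:Indec-ideals} together with Lemmas \ref{lem:scalar-2} and \ref{lem:lambdaS-other}, following verbatim the pattern of the proof of Theorem \ref{thm:orthogonal}. By Lemma \ref{lem:formCDMM}, $A$ is of the form $\Bbbk\left(\omega,N\right)$ with $N=2$ and index set $S=\left\{ 0,1\right\} \times\left\{ 0,\dots,5\right\} $; by Lemma \ref{lem:scalar-2} the form $\left\langle -,-\right\rangle $ is the canonical monomial form \eqref{eq:formcan} with $\mu\left(\left(i,j\right)\right)=\left(i,\left[\left(-1\right)^{i+1}j-3\right]_{6}\right)$ and $\nu\left(m\right)=1-m$. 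Since $N=2$ we have $\nu\left(m\right)=N-1-m$, so both halves of Theorem \ref{thm:orthog-indec} are available; moreover $\mu^{2}=\mathrm{Id}=\nu^{2}$ (this is checked inside the proof of Lemma \ref{lem:scalar-2}), so $N_{\sigma,m}^{\perp_{R}}=N_{\mu^{2}\left(\sigma\right),m}^{\perp_{L}}=N_{\sigma,m}^{\perp_{L}}$ and we may write $\perp$ throughout.

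First I would invoke Theorem \ref{thm:orthog-indec}, letting a single index $\sigma\in S$ play the role of the pair $\left(s,t\right)$ appearing in $N_{s,t,m}$, to obtain $N_{s,t,0}^{\perp}=\bigoplus_{\sigma\neq\mu\left(\left(s,t\right)\right)}N_{\sigma,0}$ and $N_{s,t,m}^{\perp}=N_{s,t,0}^{\perp}\oplus N_{\mu\left(\left(s,t\right)\right),N-m}$. Then I would unwind the index arithmetic: $\mu\left(\left(s,t\right)\right)=\left(s,\tau\right)$ with $\tau:=\left[\left(-1\right)^{s+1}t-3\right]_{6}$, and since $-3\equiv_{6}3$ one has $\tau\equiv_{6}\left(-1\right)^{s+1}t+3$. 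Hence the condition $\sigma\neq\mu\left(\left(s,t\right)\right)$ splits into the case ``first coordinate $\neq s$'', contributing $\bigoplus_{j}N_{1-s,j,0}$, and the case ``first coordinate $=s$, second coordinate $\neq\tau$'', contributing $\bigoplus_{j\not\equiv_{6}\left(-1\right)^{s+1}t+3}N_{s,j,0}$; this yields the asserted expression for $N_{s,t,0}^{\perp}$. Substituting $N-m=2-m$ and $\mu\left(\left(s,t\right)\right)=\left(s,\left(-1\right)^{s+1}t+3\right)$ into the second formula gives $N_{s,t,m}^{\perp}=N_{s,t,0}^{\perp}\oplus N_{s,\left(-1\right)^{s+1}t+3,2-m}$.

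For the last identity, note that $a=a\left(x\right)\in R$ with $a\left(0\right)=1$ is a unit of the local ring $R=\Bbbk\left[x\mid x^{N}=0\right]$, hence a unit of $A$, with $a^{-1}\in R$. Applying the formulas \eqref{eq:orthNleft} and \eqref{eq:orthNright} (valid since $A=\Bbbk\left(\omega,N\right)$) together with Lemma \ref{lem:lambdaS-other}, by which $\vartriangleleft=\blacktriangleleft$ and $h\vartriangleleft a^{-1}=S\left(a^{-1}\right)h$, I get $\left(aN_{s,t,m}\right)^{\perp_{L}}=N_{s,t,m}^{\perp_{L}}\vartriangleleft a^{-1}=S\left(a^{-1}\right)N_{s,t,m}^{\perp_{L}}$ and, likewise, $\left(aN_{s,t,m}\right)^{\perp_{R}}=N_{s,t,m}^{\perp_{R}}\blacktriangleleft a^{-1}=S\left(a^{-1}\right)N_{s,t,m}^{\perp_{R}}$; since $\perp_{L}=\perp_{R}$ these collapse to $\left(aN_{s,t,m}\right)^{\perp}=S\left(a^{-1}\right)N_{s,t,m}^{\perp}$.

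I do not expect a genuine obstacle here: all the substance has been pushed into Section \ref{sec:Indec-ideals} and into Lemmas \ref{lem:scalar-2} and \ref{lem:lambdaS-other}. The only point requiring care is the combinatorial bookkeeping over $S=\left\{ 0,1\right\} \times\left\{ 0,\dots,5\right\} $ --- namely, splitting $\bigoplus_{\sigma\neq\mu\left(\left(s,t\right)\right)}N_{\sigma,0}$ according to whether the first component matches, and recognizing the congruence $-3\equiv_{6}3$ that reconciles $\mu$ with the exponents written in the statement.
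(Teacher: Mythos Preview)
Your proposal is correct and follows essentially the same route as the paper: invoke Lemma \ref{lem:lambdaS-other} together with \eqref{eq:orthNleft}--\eqref{eq:orthNright} for the $\left(aN_{s,t,m}\right)^{\perp}$ formula, use $\mu^{2}=\mathrm{Id}$ from Lemma \ref{lem:scalar-2} to identify $\perp_{L}=\perp_{R}$, and then read off the remaining identities from Theorem \ref{thm:orthog-indec} after unpacking $\mu\left(\left(s,t\right)\right)$ and noting $-3\equiv_{6}3$. Your write-up is slightly more explicit about the combinatorial split of $\bigoplus_{\sigma\neq\mu\left(\left(s,t\right)\right)}N_{\sigma,0}$, but the argument is the same.
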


\begin{proof}
By Lemma \ref{lem:lambdaS-other} and \eqref{eq:orthNleft},\eqref{eq:orthNright},
we have that $\left(aN_{s,t,m}\right)^{\perp_{L}}=S\left(a^{-1}\right)N_{s,t,m}^{\perp_{L}}$
and $\left(aN_{s,t,m}\right)^{\perp_{R}}=S\left(a^{-1}\right)N_{s,t,m}^{\perp_{R}}$
for every $s,t,m$. We know that $\mu\left(\left(i,j\right)\right):=\left(i,\left[\left(-1\right)^{i+1}j-3\right]_{6}\right)=\left(i,\left[\left(-1\right)^{i+1}j+3\right]_{6}\right)$
(so that $\mu^{2}=\mathrm{Id}$) and $\nu\left(m\right):=1-m$ in
our case. Thus by Theorem \ref{thm:orthog-indec}, we get $N_{s,t,m}^{\perp_{R}}=N_{s,t,m}^{\perp_{L}}$
for every $s,t,m$ (hence we can use the notation $\perp$) and the
equalities in the present statement holds as $\left(i,j\right)\neq\mu\left(\left(s,t\right)\right)=\left(s,\left[\left(-1\right)^{s+1}t+3\right]_{6}\right)$
means either $i=1-s$ or $j\not\equiv_{6}\left(-1\right)^{s+1}t+3$.
\end{proof}


\begin{thebibliography}{CDMM}
\bibitem[Ab]{Abe-Hopf} E.~Abe, \emph{Hopf algebras}. Translated
from the Japanese by Hisae Kinoshita and Hiroko Tanaka. Cambridge
Tracts in Mathematics, \textbf{74}. Cambridge University Press, Cambridge-New
York, 1980.

\bibitem[CDMM]{CDMM} C. C\u{a}linescu, S. D\u{a}sc\u{a}lescu, A.
Masuoka, C. Menini, \emph{Quantum lines over non-cocommutative cosemisimple
Hopf algebras}. J. Algebra \textbf{273} (2004), no. 2, 753\textendash 779.

\bibitem[CGL]{CGL} J. Cuadra, J. M. García-Rubira, J. A. López-Ramos,
\emph{Determining all indecomposable codes over some Hopf algebras.
}J. Comput. Appl. Math. \textbf{235} (2011), no. 7, 1833\textendash 1839.

\bibitem[CVZ]{CVZ} H. Chen, F. Van Oystaeyen, Y. Zhang, \emph{The
Green rings of Taft algebras}. Proc. Amer. Math. Soc. \textbf{142}
(2014), no. 3, 765\textendash 775.

\bibitem[DNR]{DNR} S. D\u{a}sc\u{a}lescu, C. N\u{a}st\u{a}sescu,
\c{S}. Raianu, \emph{Hopf algebras. An introduction.} Monographs and
Textbooks in Pure and Applied Mathematics, \textbf{235}. Marcel Dekker,
Inc., New York, 2001.

\bibitem[Fa]{Fa} A. Facchini, \emph{Module theory. Endomorphism rings
and direct sum decompositions in some classes of modules}. Progress
in Mathematics, \textbf{167}. Birkhäuser Verlag, Basel, 1998.

\bibitem[Ja]{Jacobson} N.~Jacobson, \emph{Basic algebra I.} 2nd
ed., W. H. Freeman and Company, New York, 1985.

\bibitem[LR]{LR-FinDim} R. G. Larson, \emph{D. E. Radford, Finite-dimensional
cosemisimple Hopf algebras in characteristic 0 are semisimple. }J.
Algebra \textbf{117} (1988), no. 2, 267\textendash 289.

\bibitem[Ra]{Radford-Hopfbook} D. E. Radford, \textit{Hopf algebras}.
Series on Knots and Everything, \textbf{49}. World Scientific Publishing
Co. Pte. Ltd., Hackensack, NJ, 2012.

\bibitem[Sw]{Sweedler-Hopf} M. E. Sweedler, \emph{Hopf algebras}.
Mathematics Lecture Note Series W. A. Benjamin, Inc., New York 1969.

\bibitem[Wo]{Wood}J. A. Wood, \textit{Duality for modules over finite
rings and applications to coding theory}. Amer. J. Math. \textbf{121}
(1999), no. 3, 555\textendash 575.
\end{thebibliography}
\end{document}